\newcommand{\qbin}[2]{\genfrac{[}{]}{0pt}{}{#1}{#2}}
\newcommand{\gp}[3]{\qbin{#1}{#2}_{#3}}
\newcommand{\B}{\mathcal B}
\newcommand{\C}{\mathcal C}
\newcommand{\G}{\mathcal G}
\newcommand{\GG}{\mathfrak{G}}
\newcommand{\h}{\mathcal H}
\newcommand{\hh}{\mathfrak{H}}
\newcommand{\s}{\mathcal S}
\newcommand{\sss}{\mathfrak{S}}
\newcommand{\T}{\mathcal T}
\newcommand{\TT}{\mathfrak{T}}
\numberwithin{equation}{section}
\numberwithin{theorem}{section}
\begin{document}
\markboth{A. V. SILLS}{Identities of the Rogers-Ramanujan-Slater Type}

%
\catchline{}{}{}{}{}
%

\title{IDENTITIES OF THE ROGERS-RAMANUJAN-SLATER TYPE}
\author{ANDREW V. SILLS\footnote{Address as of August 1, 2007:
Department of Mathematical Sciences, Georgia Southern University,
203 Georgia Avenue, Statesboro, GA 30460-8093, USA; 
\texttt{asills@georgiasouthern.edu}.
}}

\address{Department of Mathematics, Rutgers University, 
Hill Center, Busch Campus\\
Piscataway, New Jersey 08854-8019, USA\\ 
\email{asills@math.rutgers.edu}}

\maketitle

\begin{history}
\received{(19 August 2005)}
\revised{(17 April 2006)}
\accepted{(8 August 2006)}
\comby{Bruce C. Berndt}
\end{history}

\begin{abstract}
It is shown that (two-variable generalizations of) more than half of Slater's list of 130 
Rogers-Ramanujan identities (L. J. Slater, Further identities of the Rogers-Ramanujan type, 
\emph{Proc. London Math Soc. (2)} 
\textbf{54} (1952), 147--167) can be easily derived using just three multiparameter
Bailey pairs and their associated $q$-difference equations.  As a bonus, new 
Rogers-Ramanujan type identities are found along with natural combinatorial 
interpretations for many of these identities.
\end{abstract}

\keywords{Rogers-Ramanujan identities; integer partitions; $q$-difference equations.}

\ccode{Mathematics Subject Classification 2000: 11P81, 11B65, 05A19, 39A13}

\section{Introduction}
As usual, let us define
\[ (A;q)_n :=  \prod_{j=0}^{n-1} (1-Aq^j),  \]
\[ (A;q)_\infty := \prod_{j=0}^\infty (1-Aq^j), \]
\[ (A_1, A_2, \dots, A_r; q  )_\infty := (A_{1};q)_{\infty} (A_{2};q)_{\infty} 
\cdots (A_{r};q)_{\infty},   \]
and assume throughout that $|q|<1$.

 The pair of $q$-series identities
\begin{equation}\label{euler1}
   \sum_{n=0}^\infty \frac{ q^{n^2}}{(q^2;q^2)_n} = 
   \frac{(q^2,q^2,q^4;q^4)_\infty}{(q;q)_\infty} 
 \end{equation} and
\begin{equation}\label{euler2}
 \sum_{n=0}^\infty \frac{q^{n(n+1)}}{(q^2;q^2)_n} = \frac{(q,q^3,q^4;q^4)_\infty}{(q;q)_\infty} 
\end{equation}
are special cases of one of the earliest $q$-series identities, an identity due to 
Euler~(\cite[Chapter 16]{LE},~\cite[p. 19, Eq. (2.2.6)]{GEA:top}):

\begin{equation}\label{eulerx}
  \sum_{n=0}^{\infty} \frac{ x^n q^{\binom{n}{2} }  }{(q;q)_n} = (-x;q)_\infty.
  \end{equation}
One reason that identities~\eqref{euler1} and~\eqref{euler2} are significant is that they foreshadow
a pair of deeper identities discovered first by L.~J. Rogers~\cite{LJR1894}, 
and later independently rediscovered by S.~Ramanujan~\cite[Vol. 2, p. 33 ff.]{PAM}:

\begin{theorem}[The Rogers-Ramanujan identities]
  \begin{equation}\label{RR1}
    \sum_{n=0}^\infty \frac{ q^{n^2}}{(q;q)_n} = 
       \frac{(q^2,q^3,q^5;q^5)_\infty}{(q;q)_\infty} 
   \end{equation} and
  \begin{equation}\label{RR2}
     \sum_{n=0}^\infty \frac{q^{n(n+1)}}{(q;q)_n} = \frac{(q,q^4,q^5;q^5)_\infty}
        {(q;q)_\infty}. 
   \end{equation}
\end{theorem}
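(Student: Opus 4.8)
The plan is to prove \eqref{RR1} and \eqref{RR2} together by introducing a single two-variable function and applying the method of $q$-difference equations, the same circle of ideas that drives the rest of this paper. Set
\[ G(x) := \sum_{n=0}^\infty \frac{x^n q^{n^2}}{(q;q)_n}, \]
so that the left-hand side of \eqref{RR1} is $G(1)$, that of \eqref{RR2} is $G(q)$, and $G(0)=1$. First I would establish the functional equation
\[ G(x) = G(xq) + xq\, G(xq^2). \]
Subtracting the series for $G(xq)$ from that for $G(x)$ produces a factor $1-q^n$ that reduces $(q;q)_n$ to $(q;q)_{n-1}$; after the index shift $n\mapsto n+1$ and the identity $x^{n+1}q^{(n+1)^2}=xq\,(xq^2)^n q^{n^2}$, the difference is recognized as $xq\,G(xq^2)$. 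This is the multi-term relative of the one-term recurrence behind Euler's identity \eqref{eulerx}.

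The second step is a uniqueness principle. Writing $G(x)=\sum_{k\ge0}c_k x^k$ and comparing coefficients of $x^k$ in the functional equation gives $c_k(1-q^k)=q^{2k-1}c_{k-1}$, so the single datum $c_0=1$ determines every $c_k$ (and indeed returns $c_k=q^{k^2}/(q;q)_k$, a reassuring self-check). Hence any formal power series $J(x)$ with $J(0)=1$ satisfying the same functional equation must coincide with $G(x)$. The problem is thereby reduced to exhibiting one further solution whose specializations at $x=1$ and $x=q$ are visibly the products on the right of \eqref{RR1} and \eqref{RR2}.

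Constructing that product-side solution is the crux, and I expect it to be the main obstacle. Here I would clear denominators by positing $J(x)=\Phi(x)/(xq;q)_\infty$ and seek $\Phi(x)=\sum_{n\ge0}\phi_n(q)x^n$ with theta-type coefficients; substituting into the functional equation and using $(xq;q)_\infty=(1-xq)(xq^2;q)_\infty$ converts it into a recurrence on the $\phi_n$ whose solution is sparse, supported on the quadratic exponents $(5n^2\pm n)/2$, with $\Phi(0)=1$ enforcing the normalization. The delicate point is precisely to guess these Gaussian-exponent coefficients and to verify that they propagate the recurrence, since a naive theta series solves the wrong equation. Once $\Phi$ is in hand, the Jacobi triple product identity with base $q^5$ evaluates the emerging bilateral sums, namely $\sum_{n}(-1)^n q^{(5n^2-n)/2}=(q^2,q^3,q^5;q^5)_\infty$ and the companion $(q,q^4,q^5;q^5)_\infty$, while $(xq;q)_\infty$ supplies the denominators; uniqueness then forces $G=J$, and setting $x=1$ and $x=q$ gives \eqref{RR1} and \eqref{RR2}, exactly as \eqref{euler1} and \eqref{euler2} arise from the analogous Euler-type computation.

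Finally, I would note that this delicate coefficient-guessing is what the machinery developed later in the paper automates: \eqref{RR1} and \eqref{RR2} are the specializations at $a=1$ and $a=q$ of the limiting case of Bailey's lemma applied to the Bailey pair with $\beta_n=1/(q;q)_n$, for which the associated $\alpha_n=(-1)^n q^{n(3n-1)/2}(1+q^n)$ (and its $a=q$ companion) collapses, under multiplication by $q^{n^2}$ and summation, into exactly the triple-product sums above. I would present the $q$-difference argument as the elementary, self-contained route and record the Bailey-pair derivation as the conceptual one that fits the framework of the sequel.
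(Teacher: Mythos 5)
Your first two steps are sound: the functional equation $G(x)=G(xq)+xq\,G(xq^2)$ follows exactly as you describe, and the coefficient recurrence $c_k(1-q^k)=q^{2k-1}c_{k-1}$ with $c_0=1$ does pin down the unique power-series solution. This is also genuinely the same circle of ideas as the paper's machinery: for $(d,e,k)=(1,1,2)$ the system \eqref{QDiffEq} collapses (via $Q^{(1,1,2)}_0=0$ and $Q^{(1,1,2)}_1(x)=Q^{(1,1,2)}_2(xq)$) to precisely your single equation for $Q^{(1,1,2)}_2=F^{(1,1,2)}_2=G$, and the paper obtains \eqref{RR1}--\eqref{RR2} by feeding $\beta_n^{(1,1,2)}(x,q)=1/(q;q)_n$ into \eqref{WBL} and quoting \eqref{QProd} for the product evaluation.

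The gap is in your third step, which you yourself flag as the crux. There is no solution of the functional equation of the form $J(x)=\Phi(x)/(xq;q)_\infty$ with $\Phi(x)=\sum_n\phi_n(q)x^n$ a sparse theta-type series in $x$: computing directly, $(xq;q)_\infty G(x)=1+0\cdot x-\frac{q^2}{1-q}\,x^2+\cdots$, so already $\phi_2=-q^2/(1-q)$ is not a monomial, and the recurrence on the $\phi_n$ will not close on coefficients supported at the exponents $(5n^2\pm n)/2$. The collapse to $\sum_{n\in\mathbb Z}(-1)^nq^{n(5n\pm1)/2}$ happens only \emph{after} the specializations $x=1$ and $x=q$. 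The genuine product-side solution (Rogers, Schur, or equivalently the Bailey pair you cite) necessarily carries extra factors of $(x;q)_n$ in the numerator --- in the paper's notation, $\alpha_n^{(1,1,2)}(x,q)=\frac{(-1)^nx^nq^{n(3n-1)/2}(1-xq^{2n})(x;q)_n}{(1-x)(q;q)_n}$, not merely its $x=1$ shadow $(-1)^nq^{n(3n-1)/2}(1+q^n)$ --- and verifying that $\frac{1}{(xq;q)_\infty}\sum_nx^nq^{n^2}\alpha_n^{(1,1,2)}(x,q)$ satisfies your functional equation (equivalently, that this $(\alpha_n,\beta_n)$ is a Bailey pair) is exactly the nontrivial telescoping you have deferred. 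As written, the argument establishes only the easy uniqueness half of the theorem; the existence of the product-side solution, which is the entire content of \eqref{RR1} and \eqref{RR2}, is asserted in a form that cannot hold and is not proved in the form that does.
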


Rogers~(\cite{LJR1894},\cite{LJR1917}) and Ramanujan~\cite[Chapter 11]{RLN1} 
discovered a number 
of $q$-series identities of similar type, and such identities are usually called 
``Rogers-Ramanujan type identities"
in the literature.  Another early Rogers-Ramanujan type identity is due to 
F.~H.~Jackson~\cite[p. 170, 5th eq.]{FHJ}:

\begin{theorem}[The Jackson-Slater identity]
\begin{equation}\label{JS1}
   \sum_{n=0}^\infty \frac{ q^{2n^2}}{(q;q)_{2n} } = 
   \frac{(-q^3, -q^5,q^8;q^8)_\infty}{(q^{2};q^{2})_\infty}. 
\end{equation}
\end{theorem}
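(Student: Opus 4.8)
The plan is to derive \eqref{JS1} from a single Bailey pair together with the limiting (``weak'') form of Bailey's lemma, specialized to base $q^2$. Recall that a pair of sequences $(\alpha_n,\beta_n)$ is a Bailey pair relative to $a$ with base $q^2$ if
$$\beta_n = \sum_{r=0}^n \frac{\alpha_r}{(q^2;q^2)_{n-r}\,(aq^2;q^2)_{n+r}},$$
and that for such a pair with $a=1$ the weak form of Bailey's lemma (the $q\mapsto q^2$ instance of Rogers' limiting case) yields
$$\sum_{n=0}^\infty q^{2n^2}\beta_n = \frac{1}{(q^2;q^2)_\infty}\sum_{n=0}^\infty q^{2n^2}\alpha_n.$$
Since the left-hand side of \eqref{JS1} is exactly $\sum_n q^{2n^2}\beta_n$ with $\beta_n=1/(q;q)_{2n}$, the whole identity reduces to exhibiting the companion sequence $\alpha_n$ and evaluating the resulting theta series.

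First I would produce the explicit Bailey pair
$$\alpha_0=1,\qquad \alpha_n = q^{2n^2-n}+q^{2n^2+n}\ (n\ge1),\qquad \beta_n=\frac{1}{(q;q)_{2n}},$$
relative to $a=1$ with base $q^2$. Verifying that this is a Bailey pair amounts to the finite identity
$$\frac{1}{(q;q)_{2n}} = \frac{1}{(q^2;q^2)_n^{2}} + \sum_{r=1}^n \frac{q^{2r^2-r}+q^{2r^2+r}}{(q^2;q^2)_{n-r}\,(q^2;q^2)_{n+r}},$$
which I would prove by a terminating $q$-series summation (for instance a $q$-Chu--Vandermonde or terminating triple-product argument), after rewriting $(q;q)_{2n}=(q^2;q^2)_n\,(q;q^2)_n$.

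Substituting this pair into the weak Bailey lemma gives
$$\sum_{n=0}^\infty \frac{q^{2n^2}}{(q;q)_{2n}} = \frac{1}{(q^2;q^2)_\infty}\Bigl(1+\sum_{n=1}^\infty\bigl(q^{4n^2-n}+q^{4n^2+n}\bigr)\Bigr) = \frac{1}{(q^2;q^2)_\infty}\sum_{m=-\infty}^\infty q^{4m^2-m}.$$
I would finish by applying the Jacobi triple product in the form $\sum_{m}(-1)^m a^m q^{8\binom{m}{2}}=(a,q^8/a,q^8;q^8)_\infty$ with $a=-q^3$, which collapses the bilateral sum to $(-q^3,-q^5,q^8;q^8)_\infty$ and yields \eqref{JS1}.

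The main obstacle is the first step: pinning down the correct $\alpha_n$ and proving the finite Bailey-pair relation, since everything downstream is forced once that pair is in hand. In the two-variable setting of this paper the same computation is organized by attaching to $\beta_n$ the generating function $\sum_n x^n q^{2n^2}/(q;q)_{2n}$ and solving its $q$-difference equation; the specialization $x=1$ then reproduces the product side above.
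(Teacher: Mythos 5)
Your proposal is correct and is essentially the derivation the paper itself encodes: your Bailey pair $\alpha_0=1$, $\alpha_n=q^{2n^2-n}+q^{2n^2+n}$, $\beta_n=1/(q;q)_{2n}$ is exactly the $x=1$, $q\mapsto q^2$ specialization of the paper's Jackson--Slater pair $\bigl(\bar{\alpha}^{(1,1,2)}_n,\bar{\beta}^{(1,1,2)}_n\bigr)$ (see \eqref{JS112} and the entry for Slater's no.\ 39 in Table 1), inserted into the weak Bailey lemma \eqref{WBL} and finished with the Jacobi triple product. The one step you defer --- verifying the finite Bailey-pair relation --- is standard (it follows from the terminating very-well-poised ${}_{6}\phi_{5}$ summation underlying \eqref{JSbeta}, i.e.\ a $q$-analogue of Gauss's second theorem rather than plain $q$-Chu--Vandermonde), so the argument is sound.
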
 
\noindent Equation~\eqref{JS1} was independently rediscovered 
by L.~J.~Slater~\cite[p. 156, Eq. (39)]{LJS1952}, who
also discovered its companion identity~\cite[p. 156, Eq. (38)]{LJS1952}:
\begin{equation}\label{JS2}
  \sum_{n=0}^\infty \frac{ q^{2n(n+1)}}{(q;q)_{2n+1} } = 
  \frac{(-q, -q^7,q^8;q^8)_\infty}{(q^{2};q^{2})_\infty}. 
\end{equation}

In a pair of papers~(\cite{WNB1},\cite{WNB2}) written during World War II, 
W.~N.~Bailey discovered the
underlying mechanism behind Rogers' identities (now known as
``Bailey's lemma"), and together with F.~J.~Dyson, 
discovered a number of new
Rogers-Ramanujan type identities.  Bailey's Ph.D. student, L.~J.~Slater, carried the 
program further and produced
a list of 130 Rogers-Ramanujan type identities.  
Although the majority of Slater's 130 identities were new, her list 
also included most of the identities 
previously discovered by Rogers, Bailey,
and Dyson, as well as a number of identities recently found to have
been recorded by Ramanujan in
his lost notebook, such as the following~\cite[p. 41]{rln}:

\begin{theorem}[The Ramanujan-Slater mod 8 identities]
\begin{equation}\label{GG1}
  \sum_{n=0}^{\infty} \frac{ q^{n^2} (-q;q^2)_n}{(q^2;q^2)_n} =
  \frac{(q^3, q^5, q^8 ; q^8)_\infty (-q;q^2)_\infty}{(q^2;q^2)_\infty}
\end{equation} and
\begin{equation}\label{GG2}
  \sum_{n=0}^{\infty} \frac{ q^{n(n+2)} (-q;q^2)_n}{(q^2;q^2)_n} =
  \frac{(q, q^7, q^8 ; q^8)_\infty (-q;q^2)_\infty}{(q^2;q^2)_\infty}.
\end{equation}
\end{theorem}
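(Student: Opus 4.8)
The plan is to derive both identities simultaneously from a single two-variable function together with its $q$-difference equation, finishing with the Jacobi triple product identity; this is exactly the Bailey-pair mechanism advertised in the abstract. Define
\[ H(x) := \sum_{n=0}^\infty \frac{x^n q^{n^2}(-q;q^2)_n}{(q^2;q^2)_n}. \]
Then \eqref{GG1} is the assertion that $H(1) = \dfrac{(q^3,q^5,q^8;q^8)_\infty(-q;q^2)_\infty}{(q^2;q^2)_\infty}$, while \eqref{GG2}, since $q^{n(n+2)} = q^{n^2}(q^2)^n$, is the assertion that $H(q^2) = \dfrac{(q,q^7,q^8;q^8)_\infty(-q;q^2)_\infty}{(q^2;q^2)_\infty}$. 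Thus the two companion identities are the specializations $x=1$ and $x=q^2$ of one master function, and it suffices to understand $H$ at these two points.

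First I would produce the $q$-difference equation. Writing $c_n(x)$ for the summand, one checks $c_n(x)-c_n(xq^2)=c_n(x)(1-q^{2n})$, so the $n=0$ term drops and a cancellation of $(q^2;q^2)_n$ against $1-q^{2n}$ occurs; shifting $n\mapsto n+1$ and splitting the factor $1+q^{2n+1}$ that arises from $(-q;q^2)_{n+1}$ then gives the second-order $q$-difference equation
\[ H(x) = (1+xq)\,H(xq^2) + xq^2\,H(xq^4). \]
The extra factor $(-q;q^2)_n$, absent from the Rogers-Ramanujan and Jackson-Slater cases, is precisely what produces the term $xq\,H(xq^2)$ and hence a mod $8$ (rather than mod $5$) structure; it is the effect of leaving one parameter free in the underlying multiparameter Bailey pair before specializing. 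Equivalently, I would recognize $\beta_n=(-q;q^2)_n/(q^2;q^2)_n$ as the $\beta$-sequence of a base-$q$ Bailey pair and invoke Bailey's lemma in its limiting form, $\sum_{n\ge0}a^nq^{n^2}\beta_n=(aq;q)_\infty^{-1}\sum_{n\ge0}a^nq^{n^2}\alpha_n$, the two choices $a=1$ and $a=q^2$ reproducing $H(1)$ and $H(q^2)$.

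Second, I would solve for the companion $\alpha$-sequence explicitly: the Bailey relation (equivalently, the $q$-difference equation) forces $\alpha_n$ to be a doubly-signed sequence supported on a short list of residues, so that $\sum_{n\ge0}a^nq^{n^2}\alpha_n$ collapses to a bilateral theta series. For $a=1$ I expect $\sum_{n\ge0}q^{n^2}\alpha_n$, after using $(q;q^2)_\infty(-q;q^2)_\infty=(q^2;q^4)_\infty$ and $(q;q)_\infty=(q;q^2)_\infty(q^2;q^2)_\infty$, to reduce to $(q^2,q^3,q^5,q^6,q^8;q^8)_\infty$; the Jacobi triple product identity with base $q^8$ then isolates the factor $(q^3,q^5,q^8;q^8)_\infty$, and dividing by $(q;q)_\infty$ leaves the prefactor $(-q;q^2)_\infty/(q^2;q^2)_\infty$. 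The parallel computation with $a=q^2$ yields $(q,q^7,q^8;q^8)_\infty$, which is \eqref{GG2}.

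The main obstacle is this middle step: correctly determining the closed form of the $\alpha$-sequence and verifying that the resulting theta sum factors as the claimed mod $8$ product. This requires (i) pinning down the specialization of the multiparameter Bailey pair so that the free parameter produces exactly $(-q;q^2)_n$, and (ii) careful bookkeeping of residues modulo $8$ in the triple product, together with the rearrangement of the stray $(q;q^2)_\infty$ and $(-q;q^2)_\infty$ factors so that the prefactor becomes $(-q;q^2)_\infty/(q^2;q^2)_\infty$ rather than $1/(q;q)_\infty$. Once the $\alpha$-side is in hand, both \eqref{GG1} and \eqref{GG2} follow from a single application of the triple product, so the only genuine work is the theta-function evaluation.
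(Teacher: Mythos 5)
Your framing is sound and matches the paper's philosophy: the two identities are the $x=1$ and $x=q^2$ specializations of the single series $H(x)=\sum_{n\geqq 0}x^nq^{n^2}(-q;q^2)_n/(q^2;q^2)_n$, your $q$-difference equation $H(x)=(1+xq)H(xq^2)+xq^2H(xq^4)$ checks out, and the paper's own derivation (Table 1, Slater's (34) and (36)) is indeed ``multiparameter Bailey pair plus a limiting case of Bailey's lemma plus the triple product.'' But the step you defer --- producing $\alpha_n$ and evaluating the resulting theta sum --- is the entire proof, and the particular way you propose to set it up would not go through as described. You invoke the limiting case $\sum_{n\geqq 0}a^nq^{n^2}\beta_n=(aq;q)_\infty^{-1}\sum_{n\geqq 0}a^nq^{n^2}\alpha_n$, i.e.\ \eqref{WBL} in base $q$ with the \emph{entire} summand $(-q;q^2)_n/(q^2;q^2)_n$ playing the role of $\beta_n$. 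With that prefactor $1/(q;q)_\infty$, the remaining sum must equal $(q^2,q^3,q^5,q^6,q^8;q^8)_\infty=(q^2;q^4)_\infty\,(q^3,q^5,q^8;q^8)_\infty$, which is a product of \emph{two} theta functions and is not a single instance of the Jacobi triple product; so ``a single application of the triple product'' cannot isolate $(q^3,q^5,q^8;q^8)_\infty$ from it, and the assertion that the inverted $\alpha_n$ is ``supported on a short list of residues'' is exactly what you would have to prove, with no evidence offered.

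The correct and essentially effortless route --- the one encoded in the paper --- is to use the other limiting case, \eqref{ATNSBL}, with the base-$q^2$ pair $\beta_n^{(1,1,2)}(x,q^2)=1/(q^2;q^2)_n$: the factor $(-q;q^2)_n$ is then supplied by Bailey's lemma itself rather than by the Bailey pair, the prefactor comes out as $(-xq;q^2)_\infty/(xq^2;q^2)_\infty$ (already the $(-q;q^2)_\infty/(q^2;q^2)_\infty$ of the right-hand sides at $x=1$), the $(-q;q^2)_n$ in numerator and denominator of the $\alpha$-side cancel at $x=1$, and the explicit $\alpha_n^{(1,1,2)}$ reduces the sum to $\sum_{n\in\mathbb{Z}}(-1)^nq^{4n^2-n}=(q^3,q^5,q^8;q^8)_\infty$, a genuine triple product; the $x=q^2$ case is handled the same way. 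Your $q$-difference equation, while correct, is never actually used to conclude anything --- it would determine $H$ only together with initial data and a known product-form solution, neither of which you supply. As written, the proposal identifies the right landmarks but leaves the one nontrivial computation unexecuted and aimed at the wrong specialization of Bailey's lemma.
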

\noindent Equations~\eqref{GG1} and~\eqref{GG2} 
subsequently rose to prominence after they were interpreted combinatorially
by H. G\"ollnitz~\cite{HG} and B. Gordon~\cite{BG1965}.

 Interest in Bailey pairs, new Rogers-Ramanujan type identities, and related topics continues to the
present day; notable examples in the recent literature include papers
Andrews and Berkovich~\cite{AB}; Bressoud, Ismail, and
Stanton~\cite{BIS}; and Warnaar~\cite{SOW} to name a few.  Please see~\cite[\S 5]{AVS2005}
for a more extensive list of relevant references.

In~\cite{AVS2004} and~\cite{AVS2005}, I showed 
that all of the Bailey-Dyson results (which included many of Rogers' identities as well) 
could be recovered using a 
single device which I called the ``standard multiparameter Bailey pair" (SMBP), 
combined with a particular set of $q$-difference equations.
As a byproduct, many new double-sum Rogers-Ramanujan type identities were discovered.  
In the conclusion of~\cite{AVS2005},
I suggested that there might exist additional fundamental 
multiparameter Bailey pairs, 
which together with the SMBP, could account for not only
the Bailey-Dyson identities, but many of Slater's additional results as well.  
This in fact turns out to be the case.  Just as the two
Rogers-Ramanujan identities~(\ref{RR1}--\ref{RR2}) are the simplest results 
derivable from the SMBP,  
the Euler identities~(\ref{euler1}--\ref{euler2}), and the Jackson-Slater 
identities~(\ref{JS1}--\ref{JS2}) 
are the central results with respect to two additional multiparameter Bailey pairs which 
happen to be very closely related to the SMBP.  Accordingly, I will call these new
multiparameter Bailey pairs the ``Euler multiparameter Bailey pair (EMBP)" and the
``Jackson-Slater multiparameter Bailey pair (JSMBP)" respectively.  (See \S\ref{MBPs}
for details.)

  Once Slater's identities are viewed from the perspective of these multiparameter
Bailey pairs, it becomes clear that there are many additional (new) identities which are
``near by" Slater's results and I will present a number of
elegant examples of such identities in
\S\ref{RRS}, including 
\begin{equation}\label{ex1}
\sum_{n,r\geqq 0} \frac{q^{n^2 + 2nr +2r^2} (-q;q^2)_r }
  { (q;q)_{2r} (q;q)_{n} } 
   = \frac{(q^{10},q^{10},q^{20};q^{20})_\infty}{(q;q)_\infty},
\end{equation}
\begin{equation}\label{ex2}
\sum_{n,r\geqq 0} \frac{q^{4n^2 + 8r^2+8nr} (-q;q^2)_{2r}  }
  { (q^4;q^4)_{2r}  (q^4;q^4)_{n} } 
   = \frac{(-q^{9},-q^{11},q^{20};q^{20})_\infty}{(q^4;q^4)_\infty},
\end{equation}
and
\begin{equation}\label{ex3}
\sum_{n,r\geqq 0} \frac{q^{2n^2 + 3r^2+4nr} (-q;q^2)_{r} }
  { (-q;q)_{2n+2r} (q;q)_{2r} (q^2;q^2)_{n} } 
   = \frac{(q^{14},q^{14},q^{28};q^{28})_\infty}{(q^2;q^2)_\infty}.
\end{equation}

 But first, background information will be presented in \S\ref{Background},
followed by the definitions and examples of explicit evaluations of the multiparameter
Bailey pairs and their associated $q$-difference equations in \S\ref{MBPs}.
In \S\ref{2varRR}, the derivation of two-variable generalized Rogers-Ramanujan type identities
will be discussed.  Slater's identities and new Slater type identities
will be given in \S\ref{RRS}.  I give partition theoretic implications related
to dilated versions of infinite family partition theorems due to 
Gordon~\cite{BG}, Bressoud~\cite{DMB}, and 
Andrews-Santos~\cite{AS:attached} in \S\ref{Partitions}.  
Some observations relating to the extraction of selected summands of series
will be discussed in \S\ref{Related}.
Finally, concluding remarks and suggestions for further research appear 
in \S\ref{Conclusion}.

\section{Background}\label{Background}
\begin{definition}
A pair of sequences $\left(\alpha_n (x,q),\beta_n(x,q)\right)$ is called a  
\emph{Bailey pair} if
for $n\geqq 0$, 
   \begin{equation} \label{BPdef}
      \beta_n (x,q) = \sum_{r=0}^n \frac{\alpha_r(x,q) }{(q;q)_{n-r} (xq;q)_{n+r}}.
   \end{equation}
\end{definition}

In~\cite{WNB1} and~\cite{WNB2}, Bailey proved the fundamental
result now known as ``Bailey's lemma" (see also~\cite[Chapter 3]{GEA:qs}).

\begin{theorem}[Bailey's lemma]
If $(\alpha_n (x,q), \beta_n (x,q))$ form a Bailey pair and $N\geqq 0$, then
\begin{gather} 
  \frac{1}{ (\frac{xq}{\rho_1};q)_N ( \frac{xq}{\rho_2};q)_N} 
  \sum_{n\geqq 0} \frac{ (\rho_1;q)_n (\rho_2;q)_n 
     (\frac{xq}{\rho_1 \rho_2} ;q)_{N-n}}
     {(q;q)_{N-n}} \left( \frac{xq}{\rho_1 \rho_2} \right)^n \beta_n(x,q) 
     \nonumber\\
 = \sum_{r=0}^N \frac{ (\rho_1;q)_r (\rho_2;q)_r}
    { (\frac{xq}{\rho_1};q)_r (\frac{xq}{\rho_2};q)_r (q;q)_{N-r} (xq;q)_{N+r}}
    \left( \frac{xq}{\rho_1 \rho_2} \right)^r \alpha_r(x,q). \label{BL}
\end{gather}
\end{theorem}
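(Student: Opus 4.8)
The plan is to substitute the defining relation \eqref{BPdef} for $\beta_n(x,q)$ into the left-hand side of \eqref{BL}, interchange the order of the two resulting summations, and then evaluate the inner sum in closed form by means of a classical terminating very-well-poised basic hypergeometric summation. First I would write out the left-hand side of \eqref{BL} and replace each $\beta_n(x,q)$ by $\sum_{r=0}^{n} \alpha_r(x,q)/\big((q;q)_{n-r}(xq;q)_{n+r}\big)$. Using the convention that $1/(q;q)_m = 0$ for $m<0$, the factor $1/(q;q)_{N-n}$ forces the outer sum over $n$ to terminate, running effectively only from $0$ to $N$. Interchanging the order of summation so that $r$ is the outer index ($0\leqq r\leqq N$) and $n$ the inner index ($r\leqq n\leqq N$) rewrites the whole expression as $\sum_{r=0}^{N}\alpha_r(x,q)\,S_r$, where $S_r$ gathers all factors depending on~$n$.

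The crux is to show that $S_r$ equals exactly the coefficient of $\alpha_r(x,q)$ appearing on the right-hand side of \eqref{BL}, namely
\[
\frac{(\rho_1;q)_r (\rho_2;q)_r}{(\tfrac{xq}{\rho_1};q)_r (\tfrac{xq}{\rho_2};q)_r (q;q)_{N-r}(xq;q)_{N+r}} \left(\frac{xq}{\rho_1\rho_2}\right)^{r}.
\]
To establish this I would shift the inner index by writing $n=r+j$ with $0\leqq j\leqq N-r$, split each rising factorial $(\,\cdot\,;q)_{r+j}$ and each descending factorial $(\,\cdot\,;q)_{N-r-j}$ by the standard product and reflection rules for $q$-Pochhammer symbols, and factor out of the sum everything independent of $j$. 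What remains is a terminating sum over $j$ which I expect to recognize as a very-well-poised ${}_6\phi_5$ series with base $q$, argument $xq/(\rho_1\rho_2)$, and parameters corresponding to $\rho_1 q^{r}$, $\rho_2 q^{r}$, and the terminating entry $q^{-(N-r)}$ built around $a=xq^{2r}$. Summing this series in closed form via the terminating case of the $q$-analogue of Dougall's ${}_6\phi_5$ theorem should collapse $S_r$ precisely to the displayed coefficient, after which summing over $r$ yields the right-hand side of \eqref{BL} and the proof is complete.

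I expect the main obstacle to be the bookkeeping in the second step: correctly reindexing and regrouping the numerous shifted Pochhammer symbols so that the inner sum is manifestly in very-well-poised ${}_6\phi_5$ form, with the very-well-poised factor $(1-xq^{2r+2j})/(1-xq^{2r})$ cleanly separated out and the balancing argument $xq/(\rho_1\rho_2)$ matched to the hypotheses of the summation formula. The algebraic manipulation of the $(\,\cdot\,;q)_{N-r-j}$ factors into rising factorials in $j$ (which introduces the termination parameter $q^{-(N-r)}$) is where sign and power-of-$q$ errors are most likely to creep in. Once the series is displayed in standard very-well-poised form, the summation itself and the subsequent simplification are immediate.
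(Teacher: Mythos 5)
The paper does not actually prove this theorem; it is quoted as Bailey's classical result with references to \cite{WNB1}, \cite{WNB2}, and \cite[Chapter 3]{GEA:qs}, so your proposal must be judged against the standard proof in those sources. Your overall strategy --- substitute \eqref{BPdef} into the left side, interchange the order of summation, shift $n=r+j$, and evaluate the inner sum in closed form --- is exactly that standard proof. The gap is in the identification of the inner sum. After the interchange and shift, the sum over $j$ is
\[
\sum_{j=0}^{N-r}\frac{(q^{-(N-r)};q)_j\,(\rho_1 q^{r};q)_j\,(\rho_2 q^{r};q)_j}
{(q;q)_j\,(xq^{2r+1};q)_j\,(\rho_1\rho_2 q^{r-N}/x;q)_j}\,q^{\,j}
\]
up to a $j$-independent prefactor. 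This is a terminating \emph{balanced} (Saalsch\"utzian) ${}_3\phi_2$ with argument $q$, and the closed form you need is the $q$-analogue of the Pfaff--Saalsch\"utz theorem, ${}_3\phi_2(q^{-m},a,b;c,abq^{1-m}/c;q,q)=\frac{(c/a;q)_m(c/b;q)_m}{(c;q)_m(c/ab;q)_m}$ with $m=N-r$, $a=\rho_1q^r$, $b=\rho_2q^r$, $c=xq^{2r+1}$; this collapses $S_r$ to the displayed coefficient as required.

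Your proposal instead asserts that the inner sum is a very-well-poised ${}_6\phi_5$ with argument $xq/(\rho_1\rho_2)$ and that a factor $(1-xq^{2r+2j})/(1-xq^{2r})$ will separate out. That step would fail: no such very-well-poised factor is present in the left-hand side of \eqref{BL} or in \eqref{BPdef}, the parameter count does not match a ${}_6\phi_5$, and the natural argument of the reindexed series is $q$, not $xq/(\rho_1\rho_2)$. (The terminating ${}_6\phi_5$ of $q$-Dougall type arises elsewhere in this circle of ideas --- e.g.\ when Bailey's lemma is applied to the unit Bailey pair, or in the very-well-poised sums \eqref{Sbeta}--\eqref{JSbeta} of the paper --- but not in the proof of the lemma itself.) Replace the invocation of the ${}_6\phi_5$ summation by $q$-Pfaff--Saalsch\"utz and the rest of your outline, including the reindexing of the $(\cdot;q)_{N-r-j}$ factors to produce the termination parameter $q^{-(N-r)}$, goes through as you describe.
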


It will be useful to highlight several special cases of the Bailey lemma; 
see~\cite[Cor. 1.3]{AVS2003} for more detail:

\begin{corollary}
If $(\alpha_n (x,q), \beta_n (x,q))$ form a Bailey pair, then
\begin{equation} \label{WBL}
  \sum_{n\geqq 0} x^n q^{n^2} \beta_n (x,q) 
   = \frac{1}{(xq;q)_\infty} \sum_{n=0}^\infty x^n q^{n^2} \alpha_n (x,q),
\end{equation}
\begin{equation} 
   \sum_{n\geqq 0} x^n q^{n^2} (-q;q^2)_n \beta_n (x,q^2)
  =\frac{(-xq;q^2)_\infty}{(xq^2;q^2)_\infty} 
   \sum_{n=0}^\infty \frac{ x^n q^{n^2} (-q;q^2)_n} {(-xq;q^2)_n}
    \alpha_n(x,q^2), \label{ATNSBL}
  \end{equation} and 
\begin{equation} 
   \sum_{n\geqq 0} x^n q^{n(n+1)/2} (-1;q)_n \beta_n (x,q)
  =\frac{(-xq;q)_\infty}{(xq;q)_\infty} 
   \sum_{n=0}^\infty \frac{ x^n q^{n(n+1)/2} (-1;q)_n} {(-xq;q)_{n}}
    \alpha_n (x,q). \label{SSBL}
  \end{equation} 
 \end{corollary}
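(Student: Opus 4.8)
The plan is to obtain all three identities as limiting specializations of Bailey's lemma~\eqref{BL}. In each case I would first let $N\to\infty$, so that the finite $q$-shifted factorials carrying the index $N$ pass to infinite products and the $N$-dependence cancels, and then send the free parameters $\rho_1,\rho_2$ to suitable limits (either to $\infty$ or to a prescribed finite value) so that the kernels on both sides of~\eqref{BL} collapse onto the desired weights. The single computational input used throughout is the elementary limit $(\rho;q)_n\,\rho^{-n}\to(-1)^n q^{\binom{n}{2}}$ as $\rho\to\infty$.

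For~\eqref{WBL}, I would let $N\to\infty$ in~\eqref{BL}, using $(q;q)_{N-n}\to(q;q)_\infty$, $(xq;q)_{N+r}\to(xq;q)_\infty$, and $(\tfrac{xq}{\rho_1\rho_2};q)_{N-n}\to(\tfrac{xq}{\rho_1\rho_2};q)_\infty$, and then send both $\rho_1\to\infty$ and $\rho_2\to\infty$. Here $(\rho_1;q)_n(\rho_2;q)_n\big(\tfrac{xq}{\rho_1\rho_2}\big)^n\to x^n q^{n^2}$, while every factor $(\tfrac{xq}{\rho_i};q)_r$ and $(\tfrac{xq}{\rho_i};q)_\infty$ tends to $1$. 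After cancelling the common factor $(q;q)_\infty$ from both sides, this yields~\eqref{WBL}.

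Identities~\eqref{ATNSBL} and~\eqref{SSBL} follow the same template, but with one parameter frozen at a finite value. For~\eqref{ATNSBL}, I would first replace $q$ by $q^2$ throughout~\eqref{BL}, then set $\rho_1=-q$ (so that $(\rho_1;q^2)_n=(-q;q^2)_n$ and $\tfrac{xq^2}{\rho_1}=-xq$), let $N\to\infty$, and finally send $\rho_2\to\infty$; the surviving factor $(\rho_2;q^2)_n\big(\tfrac{-xq}{\rho_2}\big)^n\to x^n q^{n^2}$ produces the weight $x^n q^{n^2}(-q;q^2)_n$ on the left and $x^r q^{r^2}(-q;q^2)_r/(-xq;q^2)_r$ on the right, with the prefactor $(-xq;q^2)_\infty/(xq^2;q^2)_\infty$ emerging once $(q^2;q^2)_\infty$ is cleared. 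For~\eqref{SSBL}, I would keep base $q$, set $\rho_1=-1$ (so $(\rho_1;q)_n=(-1;q)_n$ and $\tfrac{xq}{\rho_1}=-xq$), and again let $N\to\infty$ and $\rho_2\to\infty$; here $(\rho_2;q)_n\big(\tfrac{-xq}{\rho_2}\big)^n\to x^n q^{n(n+1)/2}$, which installs the half-integer exponent $n(n+1)/2$ and, after the same clearing of factors, gives~\eqref{SSBL}.

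The main obstacle is not the algebra but the analytic justification of interchanging the limits $N\to\infty$ and $\rho_i\to\infty$ with the infinite summation over $n$. Since $|q|<1$, the relevant $q$-shifted factorials are uniformly bounded in the large parameters and the general terms decay geometrically in $n$, so the term-by-term passage to the limit can be licensed by dominated convergence; this is precisely the standard setting in which these limiting forms of Bailey's lemma are established (cf.~\cite[Cor. 1.3]{AVS2003}). Granting this, each of the three specializations reduces to the routine factorial simplifications indicated above.
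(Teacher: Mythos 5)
Your proposal is correct and is exactly the derivation the paper has in mind: it states the corollary as "special cases of the Bailey lemma" and defers the details to \cite[Cor.~1.3]{AVS2003}, which are precisely the limits $N\to\infty$ followed by $\rho_1,\rho_2\to\infty$ (for~\eqref{WBL}) or $\rho_1$ frozen at $-q$ resp.\ $-1$ with $\rho_2\to\infty$ (for~\eqref{ATNSBL} and~\eqref{SSBL}), using $(\rho;q)_n\rho^{-n}\to(-1)^nq^{\binom{n}{2}}$. Your bookkeeping of the surviving factors and the final clearing of $(q;q)_\infty$ (resp.\ $(q^2;q^2)_\infty$) is accurate throughout.
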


\begin{remark}
The $\alpha_n (x,q)$ is normally chosen so that upon inserting it into
one of \eqref{WBL}, \eqref{ATNSBL}, \eqref{SSBL} and setting $x=1$,
the RHS sums to an infinite product via Jacobi's triple product
identity~\cite[p. 21, Thm. 2.8]{GEA:top}.  Simultaneously, the $\alpha_n (x,q)$
should be chosen so that the corresponding $\beta_n (x,q)$ is ``nice" in some
unquantifiable aesthetic sense. See also Remark~\ref{VWPrmk}.
\end{remark}

\section{Multiparameter Bailey Pairs and Associated $q$-Difference Equations}
\label{MBPs}
\subsection{Definitions of the Multiparameter Bailey Pairs}
Notice from the definition of Bailey pair~\eqref{BPdef}, once an $\alpha_n(x,q)$ 
is specified,
the corresponding $\beta_n(x,q)$ is completely determined (and vice versa).  
Thus it is sufficient to explicitly specify only one of the pair.

In~\cite{AVS2005}, I defined (with slightly different notation) the standard
multiparameter Bailey pair:
\begin{definition}[Standard multiparameter Bailey pair (SMBP)]
Let
\begin{multline} \label{SMBPdef}
\alpha_{n}^{(d,e,k)}(x,q):= 
   \frac{(-1)^{n/d} x^{(k/d-1)n/e} q^{(k/d-1+1/2d)n^2/e - n/2e} (1-x^{1/e} q^{2n/e})
   }{(1-x^{1/e}) (q^{d/e};q^{d/e})_{n/d}} \\
            \times  (x^{1/e};q^{d/e})_{n/d}\ \chi(d\mid n) ,
 \end{multline} where 
\[ \chi(P(n,d))= \left\{ 
  \begin{array}{ll}
  1 &\mbox{if $P(n,d)$ is true,}\\
  0 &\mbox{if $P(n,d)$ is false,}
  \end{array} \right.
\]
 and let $\beta_{n}^{(d,e,k)} (x,q)$ be determined by \eqref{BPdef}.
\end{definition}

Two additional multiparameter Bailey pairs will now be introduced.
\begin{definition}[Euler multiparameter Bailey pair (EMBP)]
Let
\begin{equation} \label{EMBPdef}
\widetilde{\alpha}_{n}^{(d,e,k)}(x,q):=q^{n(d-n)/2de} x^{-n/de}
\frac{(-x^{1/e};q^{d/e})_{n/d}}
{(-q^{d/e};q^{d/e})_{n/d}}  \alpha_{n}^{(d,e,k)}(x,q) 
 \end{equation} and let $\widetilde{\beta}_{n}^{(d,e,k)} (x,q)$ 
 be determined by \eqref{BPdef}.
\end{definition}

\begin{definition}[Jackson-Slater multiparameter Bailey pair (JSMBP)]
Let
\begin{equation} \label{JSMBPdef}
\bar{\alpha}_{n}^{(d,e,k)}(x,q):= (-1)^{n/d} q^{-n^2/2de}
\frac{(q^{d/2e};q^{d/e})_{n/d}}
{(x^{1/e} q^{d/2e}; q^{d/e})_{n/d}} \alpha_{n}^{(d,e,k)}(x,q) 
 \end{equation} and let $\bar{\beta}_{n}^{(d,e,k)} (x,q)$ be determined by 
\eqref{BPdef}.
\end{definition}

\begin{proposition}
\begin{multline}
\beta_n^{(d,e,k)}(x^e,q^e) = \frac{1}{(q^e;q^e)_n (x^e q^e;q^e)_n} 
   \sum_{r=0}^{\lfloor n/d \rfloor} \frac{(x;q^d)_r (1-xq^{2dr})
    (q^{-en};q^{e})_{dr}}
   {(q^d;q^d)_r (1-x) (x^e q^{e(n+1)};q^e)_{dr}}\\
   \times (-1)^{(d+1)r} x^{(k-d)r} q^{(1 +2 k - 2d - de)dr^2/2 + 
   (2en+e -1)dr/2} \label{Sbeta}
\end{multline}
\begin{multline}
\widetilde{\beta}_n^{(d,e,k)}(x^e,q^e) = \frac{1}{(q^e;q^e)_n (x^e q^e;q^e)_n} 
   \sum_{r=0}^{\lfloor n/d \rfloor} \frac{(x^2;q^{2d})_r (1-xq^{2dr}) 
   (q^{-en};q^{e})_{dr}}
   {(q^{2d};q^{2d})_r (1-x) (x^e q^{e(n+1)};q^e)_{dr}}\\
   \times (-1)^{(d+1)r} x^{(k-d-1)r} q^ {(2k - 2d - de)dr^2/2 + 
   (2en+e )dr/2} \label{Ebeta}
\end{multline}
\begin{multline}
\bar{\beta}_n^{(d,e,k)}(x^e,q^e) = \frac{1}{(q^e;q^e)_n (x^e q^e;q^e)_n} 
   \sum_{r=0}^{\lfloor n/d \rfloor} \frac{(x;q^d)_r (1-xq^{2dr}) 
   (q^{-en};q^{e})_{dr}
    (q^{d/2};q^{d})_{r}}
   {(q^d;q^d)_r (1-x) (x^e q^{e(n+1)};q^e)_{dr} (xq^{d/2};q^{d})_r}\\
   \times (-1)^{dr} x^{(k-d)r} q^ {(2k - 2d - de)dr^2/2 + 
   (2en+e-1)dr/2} \label{JSbeta}
\end{multline}
\end{proposition}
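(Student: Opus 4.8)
The plan is to establish all three identities by one direct computation, starting from the defining relation \eqref{BPdef} of a Bailey pair with $x$ and $q$ replaced by $x^e$ and $q^e$,
\[ \beta_n^{(d,e,k)}(x^e,q^e) = \sum_{j=0}^n \frac{\alpha_j^{(d,e,k)}(x^e,q^e)}{(q^e;q^e)_{n-j}\,(x^eq^e;q^e)_{n+j}}, \]
and likewise for $\widetilde{\beta}$ and $\bar{\beta}$. The substitution $q\mapsto q^e$, $x\mapsto x^e$ is exactly what clears the fractional exponents $1/e$ and $1/d$ appearing in \eqref{SMBPdef}. First I would exploit the indicator $\chi(d\mid n)$ carried by $\alpha^{(d,e,k)}$: only summands whose index is divisible by $d$ survive, so writing that index as $dr$ the sum collapses to one over $0\le r\le\lfloor n/d\rfloor$.

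Next I would separate the two denominator factors using the elementary manipulations
\[ \frac{1}{(q^e;q^e)_{n-dr}} = \frac{(-1)^{dr}\,q^{e\left(drn-\binom{dr}{2}\right)}}{(q^e;q^e)_n}\,(q^{-en};q^e)_{dr} \]
and
\[ (x^eq^e;q^e)_{n+dr} = (x^eq^e;q^e)_n\,(x^eq^{e(n+1)};q^e)_{dr}. \]
These pull the prefactor $1/\bigl((q^e;q^e)_n(x^eq^e;q^e)_n\bigr)$ out of the sum and manufacture precisely the factors $(q^{-en};q^e)_{dr}$ and $(x^eq^{e(n+1)};q^e)_{dr}$ seen in \eqref{Sbeta}--\eqref{JSbeta}. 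Substituting the explicit form \eqref{SMBPdef} of $\alpha_{dr}^{(d,e,k)}(x^e,q^e)$ then turns $(x^{1/e};q^{d/e})_{n/d}$ into $(x;q^d)_r$, the factor $(q^{d/e};q^{d/e})_{n/d}$ into $(q^d;q^d)_r$, and $1-x^{1/e}q^{2n/e}$ into $1-xq^{2dr}$. The remaining task is to collect the two contributions to the power of $q$ — one from $\alpha$ and one from the reflection identity — together with the two signs $(-1)^r$ and $(-1)^{dr}$, which combine to $(-1)^{(d+1)r}$; matching the coefficients of $r^2$ and $r$ in the exponent then yields \eqref{Sbeta}.

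For \eqref{Ebeta} and \eqref{JSbeta} I would run the same computation but append the extra prefactors from \eqref{EMBPdef} and \eqref{JSMBPdef}. In the Euler case the pairing identities $(x;q^d)_r(-x;q^d)_r=(x^2;q^{2d})_r$ and $(q^d;q^d)_r(-q^d;q^d)_r=(q^{2d};q^{2d})_r$ convert the products into the base-$q^{2d}$ Pochhammers of \eqref{Ebeta}, while the Jackson--Slater case simply retains the ratio $(q^{d/2};q^d)_r/(xq^{d/2};q^d)_r$. In each case the additional $q$-power and sign carried by the prefactor shift the coefficient of $r^2$, the coefficient of $r$, and the overall sign by exactly the predicted amounts (in particular $(-1)^{(d+1)r}$ becomes $(-1)^{dr}$ in the Jackson--Slater case).

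The main obstacle is purely bookkeeping: the half-integer exponents $n^2/e$, $n/2e$, $1/2d$, and $n(d-n)/2de$ in \eqref{SMBPdef}--\eqref{JSMBPdef} must be combined carefully with the term $q^{e(drn-\binom{dr}{2})}$ arising from the denominator reflection, so that after setting the index to $dr$ and rescaling by $q^e$ the coefficient of $r^2$ collapses to $(1+2k-2d-de)d/2$ for \eqref{Sbeta} and to $(2k-2d-de)d/2$ for \eqref{Ebeta} and \eqref{JSbeta}, and the coefficient of $r$ to $(2en+e-1)d/2$ (respectively $(2en+e)d/2$). No structural difficulty arises; the only genuine risk is an arithmetic slip in tracking these half-integer exponents.
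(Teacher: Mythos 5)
Your proposal is correct and is exactly the route the paper takes: the paper's proof consists of the single sentence that each identity ``follows directly from \eqref{BPdef} combined with the appropriate one of \eqref{SMBPdef}--\eqref{JSMBPdef},'' and your computation (restricting to indices $dr$ via $\chi(d\mid n)$, applying the two Pochhammer reflection/splitting identities, and tracking the exponents) is precisely that direct verification, with the exponent bookkeeping checking out in all three cases.
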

\begin{proof} Each follows directly from~\eqref{BPdef}
combined with the appropriate one of~\eqref{SMBPdef}--\eqref{JSMBPdef}.
\end{proof}

\begin{remark} \label{VWPrmk} 
The ${\alpha}_n^{(d,e,k)}$, $\widetilde{\alpha}_n^{(d,e,k)}$,
and $\bar{\alpha}_n^{(d,e,k)}$ were defined so that the series on the right hand
sides of~\eqref{Sbeta}--\eqref{JSbeta} would be \emph{very well-poised},
see, e.g., \cite{GEA:vwpt} or~\cite[Chapter 2]{GR}.
\end{remark}

\subsection{Explicit Evaluations}
\subsubsection{Selected Evaluations of the SMBP}
In Bailey~\cite[pp. 5--6]{WNB2}, several Bailey pairs are found; in our current
notation, this translates to
\begin{gather}
\beta_n^{(1,1,2)}(x,q) = \frac{1}{(q;q)_n},\\
\beta_n^{(1,2,1)}(x^2,q^2) = \frac{(-1)^n q^{n^2} }{ (q^2 ; q^2 )_n (-xq;q)_{2n}},\\
\beta_n^{(1,2,2)}(x^2,q^2) = \frac{1}{(q^2;q^2)_n (-xq;q)_{2n}},\\
\beta_n^{(1,3,2)}(x^3,q^3) = \frac{(xq;q)_{3n}}{(q^3;q^3)_n (x^3 q^3; q^3)_{2n}},\\
\beta_n^{(2,1,2)}(x,q) = \frac{q^{\binom{n}{2}}}{(q;q)_n (xq;q^2)_n},\\
\beta_n^{(2,1,3)}(x,q) = \frac{1}{(q;q)_n (xq;q^{2})_n},\\
\beta_n^{(3,1,4)}(x,q) = \frac{(x;q^3)_n}{(q;q)_n (x;q)_{2n}}.
\end{gather}
Perhaps the most important and fundamental Bailey pair of
all is the \emph{unit Bailey pair}, which appears in our current notation as
\begin{equation} \beta_n^{(1,1,1)}(x,q) = \chi({n=0}). \end{equation}
See~\cite[\S 3.5]{GEA:qs} for the standard applications of the unit Bailey pair.
 
 In~\cite{AVS2003} and \cite{AVS2005}, two dozen other explicit evaluations
are given for particular values of of $(d,e,k)$ for $\beta_n^{(d,e,k)}(x,q)$, so
we will turn our attention here to $\widetilde{\beta}_n^{(d,e,k)}(x,q)$
and $\bar{\beta}_n^{(d,e,k)}(x,q)$.  The method of evaluation employs 
standard $q$-hypergeometric summation and transformation formulas such 
as those found in~\cite{GR}; see~\cite[\S3.2]{AVS2005} for more detail.

\subsubsection{Selected Evaluations of the EMBP}
\begin{gather}
\widetilde{\beta}_n^{(1,1,1)}(x,q) = \frac{(-1)^n x^{-n}}{(q^2;q^2)_n}
   \label{E111}\\
\widetilde{\beta}_n^{(1,1,2)}(x,q) = \frac{1}{(q^2;q^2)_n}
   \label{E112}\\   
\widetilde{\beta}_n^{(1,2,2)}(x^2,q^2) = \frac{(q;q^2)_n}
   {(q^2;q^2)_n (-xq;q)_{2n}}
   \label{E122} \\
\widetilde{\beta}_n^{(1,2,3)}(x^2,q^2) = \frac{1}{(-q;q)_n^2}
    \sum_{r\geqq 0} \frac{ q^{\binom{r+1}{2}} (-x;q)_r}{(q;q)_r (-xq;q)_{n+r}(q;q)_{n-r}}
   \label{E123}  \\
\widetilde{\beta}_n^{(1,2,4)}(x^2,q^2) = 
    \sum_{r\geqq 0} \frac{ x^{2r} q^{2r^2} (q;q^2)_r}
    {(q^2;q^2)_r (-xq;q)_{2r} (q^2;q^2)_{n-r}}
   \label{E124} \\
\widetilde{\beta}_n^{(1,6,4)}(x^6,q^6) = \frac{1}{(q^6;q^6)_{2n}}
    \sum_{r\geqq 0} \frac{ x^{2r} q^{2r^2} (q;q^2)_r (x^2 q^2; q^2)_{3n-r}}
     {(q^2;q^2)_r (-xq;q)_{2r} (q^6;q^6)_{n-r}}
   \label{E164}  \\
\widetilde{\beta}_n^{(2,1,3)}(x,q) = \frac{(-q;q^2)_n }{(q^2;q^2)_n (xq;q^2)_n}
   \label{E213}   \\
   \widetilde{\beta}_n^{(2,1,4)}(x,q) = \frac{1}{(-q;q)_n}
   \sum_{r\geqq 0} \frac{ q^{\binom{r+1}{2}}(-x;q^2)_r }
   {(q;q)_r (xq;q^2)_r (q;q)_{n-r}}
   \label{E214}\\
     \widetilde{\beta}_n^{(2,1,5)}(x,q) = 
   \sum_{r\geqq 0} \frac{ q^{r^2}(-q;q^2)_r }
   {(q^2;q^2)_r (xq;q^2)_r (q;q)_{n-r}}
   \label{E215} \\
 \widetilde{\beta}_n^{(2,2,4)}(x^2,q^2) =  
 \frac{(xq^2;q^2)_n (-q^2;q^2)_n}{(x^2 q^2;q^2)_{2n}}
   \sum_{r\geqq 0} \frac{ q^{2r^2} }
   {(q^4;q^4)_r (q^4;q^4)_{n-r}}
   \label{E224} \\
   \widetilde{\beta}_n^{(2,2,5)}(x^2,q^2) =  
 \frac{1}{(-x q;q)_{2n}}
   \sum_{r\geqq 0} \frac{ x^r q^{r^2} (-q;q^2)_r  }
   {(q^2;q^2)_r (x^2 q^2;q^2)_r (q^2 ; q^2 )_{n-r}}
   \label{E225}                          
\end{gather}

\subsubsection{Selected Evaluations of the JSMBP}
\begin{gather}
\bar{\beta}_n^{(1,1,1)}(x,q) = \frac{q^{-n/2}}{(q;q)_n (x\sqrt{q};q)_n}
  \label{JS111}\\
 \bar{\beta}_n^{(1,1,2)}(x,q) = \frac{1}{(q;q)_n (x\sqrt{q};q)_n}
  \label{JS112}\\ 
\bar{\beta}_n^{(1,2,2)}(x^2,q^2) = \frac{(-x\sqrt{q};q)_{2n}}
   {(q^2;q^2)_n (-xq;q)_{2n} (x^2 q;q^2)_n} \label{JS122}\\
\bar{\beta}_n^{(1,2,3)}(x^2,q^2) = \frac{1}{(-xq;q)_{2n}}
   \sum_{r\geqq 0} \frac{x^r q^{r^2} }{(q;q)_r (x\sqrt{q};q)_r (q^2;q^2)_{n-r}}
   \label{JS123}\\
\bar{\beta}_n^{(1,2,4)}(x^2,q^2) = 
   \sum_{r\geqq 0} \frac{x^{2r} q^{2r^2} (-x\sqrt{q};q)_{2r}}
   {(-xq;q)_{2r} (x^2 q; q^2)_r (q^2;q^2)_r (q^2;q^2)_{n-r}}
   \label{JS124}\\
 \bar{\beta}_n^{(2,1,3)}(x,q) = \frac{(1-xq^n) (x;q^2)_n}{(q;q)_n (x;q)_n (xq;q^2)_n}
  \label{JS213}\\  
 \bar{\beta}_n^{(2,1,4)}(x,q) = \frac{1}{(xq;q^2)_{n}}
   \sum_{r\geqq 0} \frac{x^r q^{2r^2} }{(q;q)_{n-2r} (xq;q^2)_r (q^2;q^2)_{r}}
   \label{JS214}\\  
 \bar{\beta}_n^{(2,1,5)}(x,q) = 
   \sum_{r\geqq 0} \frac{x^r q^{r^2} (x;q^2)_r}
   {(q;q)_r (x;q)_r (xq;q^2)_r }
   \label{JS215}\\   
  \bar{\beta}_n^{(2,2,4)}(x^2,q^2) = \frac{1}{(-xq;q)_{2n}}
   \sum_{r\geqq 0} \frac{x^r q^{2r^2} }{(q^2;q^2)_{n-r}(xq;q^2)_{n-r} (xq;q^2)_r (q^2;q^2)_{r}}
   \label{JS224}\\  
  \bar{\beta}_n^{(2,2,5)}(x^2,q^2) = \frac{1}{(-xq;q)_{2n}}
   \sum_{r\geqq 0} \frac{x^r q^{r^2} (x;q^2)_r }
    {(q;q)_{r} (x;q)_{r} (xq;q^2)_r (q^2;q^2)_{n-r}}
   \label{JS225}  
\end{gather}

\subsection{$q$-Difference Equations}
In~\cite[p. 106, Eqs. (7.2.1) and (7.2.2)]{GEA:top}, Andrews defines the following functions:
\begin{definition}
For real $k$ and $i$, with $|x|<|q|^{-1}$ and $|q|<1$,
\begin{gather}
   H_{k,i}(a;x;q):= \frac{(axq;q)_\infty}{(x;q)_\infty} \sum_{n=0}^{\infty}
      \frac{x^{kn} q^{kn^2 + (1-i)n} a^n (a^{-1};q)_n  (1-x^i q^{2ni}) (x;q)_n}{ (q;q)_n (axq;q)_n }, \label{Hdef}\\
   J_{k,i}(a;x;q):= H_{k,i}(a;xq;q) - axq H_{k,i-1}(a;xq;q),\label{Jdef}
\end{gather}
\end{definition}
\noindent and proves~\cite[p. 107, Lemma 7.2]{GEA:top} that
\begin{equation}
   J_{k,i}(a;x;q)= J_{k,i-1}(a;x;q) + (xq)^{i-1} \Big( J_{k,k-i+1}(a;xq;q) - 
   a J_{k,k-i+2}(a;xq;q) \Big).
   \label{JDiffEq}
\end{equation}
Also, we will need
~\cite[p. 436, Eq. (2.8)]{GEA:qDiff}
\begin{equation} \label{HJ}
H_{k,1}(a;x;q) = J_{k,k}(a;x;q) = J_{k,k+1}(a;x;q),
\end{equation}
\cite[p. 435, Eq. (2.4)]{GEA:qDiff}
\begin{equation}\label{H0}
H_{k,0}(a;x;q) = 0,
\end{equation}
and~\cite[p. 435, Eq. (2.3)]{GEA:qDiff}
\begin{equation}\label{Hneg}
H_{k,-i}(a;x;q) = -x^{-i} H_{k,i}(a;x;q).
\end{equation}
Now we are ready to state a theorem which will yield many two-variable
Rogers-Ramanujan type identities.
\begin{theorem} For positive integers $d$, $e$, and $k$,
\begin{equation} \label{SMBPinWBL}
 \sum_{n=0}^\infty x^{en} q^{en^2} \beta^{(d,e,k)}_n (x^e,q^e) = 
    \frac{(xq^d;q^d)_\infty}{(x^e q^e; q^e)_\infty }H_{d(e-1)+k, 1}(0;x;q^d)
\end{equation}
\begin{equation} \label{EMBPinWBL}
 \sum_{n=0}^\infty x^{en} q^{en^2} \widetilde{\beta}^{(d,e,k)}_n (x^e,q^e) = 
    \frac{(xq^d;q^d)_\infty  (-x;q^d)_\infty}{(x^e q^e; q^e)_\infty }
    H_{\frac{d(e-1)+k-1}{2}, \frac 12}(0;x^2;q^{2d})
\end{equation}
\begin{equation} \label{JSMBPinWBL}
 \sum_{n=0}^\infty x^{en} q^{2en^2} \bar{\beta}^{(d,e,k)}_n (x^e,q^{2e}) = 
    \frac{(x q^{2d};q^{2d})_\infty}{(x^e q^{2e}; q^{2e})_\infty  (xq^d;q^{2d})_{\infty}}
    H_{d(e-1)+k,1}(q^{-d};x;q^{2d})
\end{equation}
\end{theorem}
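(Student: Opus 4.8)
The plan is to prove all three identities by the same two-step method: first apply the weak form of Bailey's lemma~\eqref{WBL} to pass from the $\beta$-side to the $\alpha$-side, and then recognize the resulting very-well-poised $\alpha$-sum as one of Andrews' functions $H_{k,i}$ via~\eqref{Hdef}. For~\eqref{SMBPinWBL} I would invoke~\eqref{WBL} with $x\mapsto x^e$ and $q\mapsto q^e$, which turns the left-hand side into $\tfrac{1}{(x^eq^e;q^e)_\infty}\sum_{n\geqq0}x^{en}q^{en^2}\alpha_n^{(d,e,k)}(x^e,q^e)$; the same substitution handles~\eqref{EMBPinWBL} with $\alpha$ replaced by $\widetilde\alpha$, while~\eqref{JSMBPinWBL} requires~\eqref{WBL} with $x\mapsto x^e$ and $q\mapsto q^{2e}$ applied to $\bar\alpha$. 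Note that this uses only the definitions of the $\alpha$'s, not the explicit $\beta$ evaluations. In each case the prefactor $\tfrac{1}{(x^eq^e;q^e)_\infty}$ (respectively $\tfrac{1}{(x^eq^{2e};q^{2e})_\infty}$) already accounts for the corresponding denominator on the right-hand side, so the heart of the matter is to evaluate the $\alpha$-sum.

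For the $\alpha$-sum itself, I would substitute the explicit definitions~\eqref{SMBPdef}--\eqref{JSMBPdef}. Because of the factor $\chi(d\mid n)$, only the indices $n=dm$ survive, and I would reindex accordingly. The key is then a patient bookkeeping of the exponents of $x$ and $q$: writing $k':=d(e-1)+k$, the powers collapse neatly, the $x$-exponent becoming $k'm$ for the SMBP (respectively $(k'-1)m$ and $k'm$ for the EMBP and JSMBP), and the $q$-exponent combining to $dk'm^2+d\binom m2$, $dk'm^2$, and $2dk'm^2-dm$ respectively. The fractional exponents $1/2d$, $1/2e$, and the like appearing in~\eqref{SMBPdef} clear exactly against the $e$ (or $2e$) produced by the substitution $q\mapsto q^e$ (or $q^{2e}$), which is the algebraic reason the final bases come out as $q^d$ and $q^{2d}$.

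With the $\alpha$-sums in hand, I would match them against~\eqref{Hdef}. For~\eqref{SMBPinWBL} and~\eqref{EMBPinWBL} the relevant specialization is $a=0$; here I would first rewrite $a^n(a^{-1};q)_n=\prod_{j=0}^{n-1}(a-q^j)$ so that the limit $a\to0$ produces $(-1)^nq^{\binom n2}$ rather than an indeterminate form, and observe that $(axq;q)_\infty=(axq;q)_n=1$. The SMBP sum is then $(xq^d;q^d)_\infty H_{k',1}(0;x;q^d)$. For the EMBP the extra factors $(-x;q^d)_m$ and $(-q^d;q^d)_m$ coming from~\eqref{EMBPdef} pair with $(x;q^d)_m$ and $(q^d;q^d)_m$ to give $(x^2;q^{2d})_m$ and $(q^{2d};q^{2d})_m$; this collapse, together with the half-integer parameters $i=\tfrac12$ and index $\tfrac{k'-1}{2}$, is exactly what reproduces $H_{(k'-1)/2,1/2}(0;x^2;q^{2d})$ and the product $(-x;q^d)_\infty$. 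For~\eqref{JSMBPinWBL} one does not take $a\to0$; instead the ratio $(q^{d/2e};q^{d/e})_{n/d}/(x^{1/e}q^{d/2e};q^{d/e})_{n/d}$ in~\eqref{JSMBPdef} specializes under $q\mapsto q^{2e}$ to $(q^d;q^{2d})_m/(xq^d;q^{2d})_m$, which is precisely the shape of $(a^{-1};q)_n/(axq;q)_n$ with $a=q^{-d}$ and base $q^{2d}$; the factor $a^n=q^{-dn}$ supplies the $-dm$ in the $q$-exponent, and the remaining products reassemble into $\tfrac{(xq^{2d};q^{2d})_\infty}{(xq^d;q^{2d})_\infty}H_{k',1}(q^{-d};x;q^{2d})$.

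I expect the main obstacle to be purely computational rather than conceptual: the careful tracking and cancellation of the many fractional exponents in~\eqref{SMBPdef}--\eqref{JSMBPdef}, especially verifying that the $n$-linear terms (the $-n/2e$ in the SMBP, the $q^{n(d-n)/2de}$ twist in the EMBP, and the $q^{-n^2/2de}$ twist in the JSMBP) combine with the shifts $(1-i)n$ and $a^n$ inside~\eqref{Hdef} to leave exactly the stated parameters. A secondary point requiring care is the passage to the limit $a\to0$ in the SMBP and EMBP cases, which must be made via the product form $\prod_{j=0}^{n-1}(a-q^j)$ to avoid the spurious singularity in $(a^{-1};q)_n$.
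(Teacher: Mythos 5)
Your proposal is correct and follows exactly the paper's route: the paper's proof simply says to insert each multiparameter Bailey pair into \eqref{WBL} and verify the resulting $\alpha$-sum equals the stated $H$-expression by direct calculation, which is precisely the two-step plan you describe. Your exponent bookkeeping (the $x$-exponents $k'm$, $(k'-1)m$, $k'm$ and $q$-exponents $dk'm^2+d\binom m2$, $dk'm^2$, $2dk'm^2-dm$ after setting $n=dm$), the $a\to 0$ limit via $\prod_{j=0}^{n-1}(a-q^j)$, and the identification $a=q^{-d}$ in the Jackson--Slater case all check out.
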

\begin{proof}
Insert each of the three multiparameter Bailey pairs into~\eqref{WBL}.  The equality of each
resulting right hand side with the respective right hand sides 
of~\eqref{SMBPinWBL}--\eqref{JSMBPinWBL} is easily verified by direct calculation.
\end{proof}

  Guided by the preceding theorem, let us make the following definitions, which will
facilitate the derivation and cataloging of families of two-variable Rogers-Ramanujan
type identities.
\begin{definition}
For $d$, $e$, and $k\in \mathbb Z_+$,  
let
\begin{gather}
   Q^{(d,e,k)}_i (x):= \frac{(xq^d;q^d)_\infty }{(x^e q^e ; q^e)_\infty} J_{d(e-1)+k , i}(0;x;q^d),
     \label{QDef} \\
   \widetilde{Q}^{(d,e,k)}_i (x):= \frac{(x^2 q^{2d};q^{2d})_\infty}{(x^e q^e; q^e)_\infty }
    J_{\frac{d(e-1)+k-1}{2}, \frac i2}(0;x^2;q^{2d}) \label{QtildeDef}\\
    \bar{Q}^{(d,e,k)}_i (x):=  \frac{(x q^{2d};q^{2d})_\infty}
    {(x^e q^{2e}; q^{2e})_\infty  (xq^d;q^{2d})_{\infty}}
    J_{d(e-1)+k,i}(q^{-d};x;q^{2d}) \label{QbarDef}
\end{gather}
\end{definition}

We record the $q$-difference equations and initial conditions which establish $Q^{(d,e,k)}_i (x)$,
$\widetilde{Q}^{(d,e,k)}_i (x)$, and $\bar{Q}^{(d,e,k)}_i (x)$ uniquely as double power series in $x$ and $q$.
\begin{theorem}
For $1\leqq i \leqq k+d(e-1)$, 
  \begin{equation}\label{QDiffEq}
      Q^{(d,e,k)}_i (x) =   Q^{(d,e,k)}_{i-1} (x) +
      \frac{(1-xq^d) x^{i-1} q^{d(i-1)}}{(x^e q^e ; q^e)_{d} }Q^{(d,e,k)}_{d(e-1)+k-i+1}(xq^d)
   \end{equation}
and 
  \begin{equation}\label{QInitConds}
  Q^{(d,e,k)}_1 (0) = Q^{(d,e,k)}_2 (0) = \cdots = Q^{(d,e,k)}_{d(e-1)+k} (0)=1.
  \end{equation} 
\end{theorem}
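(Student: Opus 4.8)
The plan is to obtain both assertions directly from the definition \eqref{QDef} together with Andrews' $q$-difference equation \eqref{JDiffEq}, specialized to $a=0$. Throughout write $K:=d(e-1)+k$, so that by \eqref{QDef}
\[
   Q^{(d,e,k)}_i(x) = \frac{(xq^d;q^d)_\infty}{(x^e q^e;q^e)_\infty}\, J_{K,i}(0;x;q^d).
\]

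For the recurrence \eqref{QDiffEq} I would first rewrite \eqref{JDiffEq} with its base $q$ replaced throughout by $q^d$, its first subscript by $K$, and $a=0$. Since the last term of \eqref{JDiffEq} carries the explicit factor $a$, this specialization kills it and leaves
\[
   J_{K,i}(0;x;q^d) = J_{K,i-1}(0;x;q^d) + (xq^d)^{i-1} J_{K,K-i+1}(0;xq^d;q^d);
\]
for $1\leqq i\leqq K$ the surviving index $K-i+1$ lies in $[1,K]$, so no boundary evaluation is needed here. Multiplying through by $(xq^d;q^d)_\infty/(x^eq^e;q^e)_\infty$ converts the first two $J$'s into $Q^{(d,e,k)}_i(x)$ and $Q^{(d,e,k)}_{i-1}(x)$ verbatim. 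The only genuine computation is the third term: the prefactor occurring in the definition of $Q^{(d,e,k)}_{K-i+1}(xq^d)$ is $(xq^{2d};q^d)_\infty/(x^e q^{e(d+1)};q^e)_\infty$, so the overall prefactor divided by this one telescopes, via the elementary identities $(A;q^d)_\infty=(1-A)(Aq^d;q^d)_\infty$ and $(B;q^e)_\infty=(B;q^e)_d\,(Bq^{ed};q^e)_\infty$, to $(1-xq^d)/(x^eq^e;q^e)_d$. Together with $(xq^d)^{i-1}=x^{i-1}q^{d(i-1)}$ this reproduces precisely the coefficient displayed in \eqref{QDiffEq}, with $K-i+1=d(e-1)+k-i+1$.

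For the initial conditions \eqref{QInitConds} I would set $x=0$. The prefactor collapses because $(0;q^d)_\infty=(0;q^e)_\infty=1$, so $Q^{(d,e,k)}_i(0)=J_{K,i}(0;0;q^d)=H_{K,i}(0;0;q^d)$ by \eqref{Jdef} (the $axq$ term vanishes at $a=0$). Reading off \eqref{Hdef} at $a=0$, $x=0$, the outer factor $(axq;q)_\infty/(x;q)_\infty$ equals $1$; in the series each $n\geqq 1$ term carries $x^{Kn}$ with $K\geqq 1$ and so vanishes at $x=0$, while the $n=0$ term equals $1-x^i$, whose value at $x=0$ is $1$ precisely because $i\geqq 1$. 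Hence $H_{K,i}(0;0;q^d)=1$ for each $1\leqq i\leqq K$, which is \eqref{QInitConds}.

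The main obstacle, such as it is, is entirely bookkeeping in the recurrence step: keeping the base-$q^d$ and base-$q^e$ shifts straight and verifying that the quotient of infinite products collapses to exactly $(1-xq^d)/(x^eq^e;q^e)_d$ rather than to some neighbouring expression. One small point of care is the $a=0$ specialization of \eqref{Hdef}, where $a$ appears as $a^{-1}$; this is harmless because the combination $a^n(a^{-1};q)_n\to(-1)^n q^{\binom{n}{2}}$ is regular, so $a=0$ is a legitimate value of the resulting power series in $x$. Finally, the analogous recurrences and initial conditions for $\widetilde{Q}^{(d,e,k)}_i$ and $\bar{Q}^{(d,e,k)}_i$ follow by the identical argument applied to \eqref{QtildeDef} and \eqref{QbarDef}; for $\bar{Q}$ one retains $a=q^{-d}\neq 0$, so the full \eqref{JDiffEq} (including its $-aJ$ term) is needed and the boundary relations \eqref{HJ}, \eqref{H0}, \eqref{Hneg} supply the evaluations at the ends of the index range.
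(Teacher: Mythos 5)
Your proposal is correct and follows essentially the same route as the paper, which derives \eqref{QDiffEq} from \eqref{JDiffEq} and \eqref{QDef} and obtains \eqref{QInitConds} by setting $x=0$ in \eqref{Hdef}; you have simply written out the prefactor telescoping to $(1-xq^d)/(x^eq^e;q^e)_d$ and the $a\to 0$ regularity of $a^n(a^{-1};q)_n$ that the paper leaves implicit.
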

\begin{proof}
Equation~\eqref{QDiffEq} follows from~\eqref{JDiffEq}, \eqref{QDef}, 
and \eqref{H0}.
Equation~\eqref{QInitConds} can be traced back to the fact that when $x$ is set to $0$
in~\eqref{Hdef}, the only nonzero term is the $n=0$ term, which is $1$. 
\end{proof}

The proofs of the following are analogous:
\begin{theorem}
For $1\leqq i \leqq k+d(e-1)$, 
  \begin{equation}\label{QtildeDiffEq}
      \widetilde{Q}^{(d,e,k)}_i (x) =   \widetilde{Q}^{(d,e,k)}_{i-2} (x) +
      \frac{(1-x^{2}q^{2d}) x^{i-2} q^{d(i-2)}}{(x^e q^e ; q^e)_{d} }
      \widetilde{Q}^{(d,e,k)}_{d(e-1)+k-i+1}(xq^d)
   \end{equation}
and 
  \begin{equation}\label{QtildeInitConds}
  \widetilde{Q}^{(d,e,k)}_1 (0) = \widetilde{Q}^{(d,e,k)}_2 (0) = 
    \cdots = \widetilde{Q}^{(d,e,k)}_{d(e-1)+k} (0)=1.
  \end{equation} 
\end{theorem}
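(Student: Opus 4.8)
The plan is to mirror the proof of the companion recurrence \eqref{QDiffEq}, feeding the appropriate parameter substitutions into the master $J$-difference equation \eqref{JDiffEq}. Concretely, I would apply \eqref{JDiffEq} with $a=0$, with the base $q$ replaced by $q^{2d}$, with the series variable $x$ replaced by $x^2$, with the first index replaced by $K:=\frac{d(e-1)+k-1}{2}$, and with the subscript $i$ replaced by $\frac i2$. The decisive simplification is that $a=0$ kills the final term $-aJ_{K,K-i/2+2}(\cdots)$ of \eqref{JDiffEq} outright, leaving a two-term identity; this is the structural reason the tilde recurrence steps by two rather than by one. Under the substitution the subscript $i-1$ of \eqref{JDiffEq} becomes $\frac i2-1=\frac{i-2}{2}$, which upon reinstating the factor of two is exactly the index $i-2$ in \eqref{QtildeDiffEq}, while the shifted subscript $k-i+1$ becomes $K-\frac i2+1=\frac{d(e-1)+k-i+1}{2}$, the correct $J$-subscript for $\widetilde{Q}^{(d,e,k)}_{d(e-1)+k-i+1}$.

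Next I would track the two prefactors. The monomial $(xq)^{i-1}$ of \eqref{JDiffEq} becomes $(x^2q^{2d})^{i/2-1}=x^{i-2}q^{d(i-2)}$, matching \eqref{QtildeDiffEq}. After multiplying through by the infinite-product prefactor of \eqref{QtildeDef}, the term carrying $J_{K,(i-2)/2}(0;x^2;q^{2d})$ is immediately $\widetilde{Q}^{(d,e,k)}_{i-2}(x)$. For the remaining term I would note that $a=0$ forces the shifted $J$ to appear with internal argument $(xq^d)^2$, so I would read off its value from \eqref{QtildeDef} evaluated at $x\mapsto xq^d$; this forces me to compute the ratio of the two prefactors, and \emph{the hard part will be verifying that this ratio collapses exactly} to $\frac{1-x^2q^{2d}}{(x^eq^e;q^e)_d}$.

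That collapse rests on two product identities. First, stripping the leading factor from the numerator gives $\frac{(x^2q^{2d};q^{2d})_\infty}{(x^2q^{4d};q^{2d})_\infty}=1-x^2q^{2d}$, which supplies the factor $(1-x^2q^{2d})$. Second, using $(xq^d)^eq^e=x^eq^{e(d+1)}$ together with the splitting $(x^eq^e;q^e)_\infty=(x^eq^e;q^e)_d\,(x^eq^{e(d+1)};q^e)_\infty$ gives $\frac{(x^eq^{e(d+1)};q^e)_\infty}{(x^eq^e;q^e)_\infty}=\frac{1}{(x^eq^e;q^e)_d}$, which supplies the denominator $(x^eq^e;q^e)_d$. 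Assembling these with the monomial $x^{i-2}q^{d(i-2)}$ produces precisely the second term of \eqref{QtildeDiffEq}.

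Finally, for the initial conditions \eqref{QtildeInitConds} I would set $x=0$, exactly as in the proof of \eqref{QInitConds}. The prefactor of \eqref{QtildeDef} is then $1$, and by \eqref{Jdef} with $a=0$ we have $J_{K,i/2}(0;x^2;q^{2d})=H_{K,i/2}(0;x^2q^{2d};q^{2d})$, whose internal argument $x^2q^{2d}$ vanishes at $x=0$; by \eqref{Hdef} the only surviving term is then the $n=0$ term, which equals $1$, so $\widetilde{Q}^{(d,e,k)}_i(0)=1$ for every admissible $i$. Throughout, I expect the principal obstacle to be purely bookkeeping: propagating the half-integer $J$-subscripts and the $x\mapsto x^2$, $q\mapsto q^{2d}$ rescalings consistently enough that the product ratio telescopes to the stated coefficient without a stray factor.
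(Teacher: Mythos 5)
Your proposal is correct and is essentially the paper's own argument: the paper proves \eqref{QtildeDiffEq} by declaring it ``analogous'' to the proof of \eqref{QDiffEq}, i.e.\ by substituting $a=0$, $q\mapsto q^{2d}$, $x\mapsto x^2$, $k\mapsto\frac{d(e-1)+k-1}{2}$, $i\mapsto\frac i2$ into \eqref{JDiffEq} and absorbing the ratio of prefactors from \eqref{QtildeDef} exactly as you do, and it handles \eqref{QtildeInitConds} by the same $x=0$ observation about \eqref{Hdef}. One small expository slip: the step of two in the index is \emph{not} caused by $a=0$ killing the last term of \eqref{JDiffEq} (that happens for the untilded $Q$ as well, which steps by one); it comes from the halved subscript $\frac i2$ in \eqref{QtildeDef}, which your subsequent computation in fact tracks correctly.
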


\begin{theorem}
For $1\leqq i \leqq k+d(e-1)$, 
  \begin{multline}\label{QbarDiffEq}
      \bar{Q}^{(d,e,k)}_i (x) =   \bar{Q}^{(d,e,k)}_{i-1} (x) +
      \frac{(1-x q^{2d}) x^{i-1} q^{2d(i-1)}}{(x^e q^{2e} ; q^{2e})_{d} (1-xq^d)}
      \left( \bar{Q}^{(d,e,k)}_{d(e-1)+k-i+1}(xq^{2d}) \right. \\
      - \left. q^{-d}
         \bar{Q}^{(d,e,k)}_{d(e-1)+k-i+2}(xq^{2d}) \right)
   \end{multline}
and 
  \begin{equation}\label{QbarInitConds}
  \bar{Q}^{(d,e,k)}_1 (0) = \bar{Q}^{(d,e,k)}_2 (0) = 
    \cdots = \bar{Q}^{(d,e,k)}_{d(e-1)+k} (0)=1.
  \end{equation} 
\end{theorem}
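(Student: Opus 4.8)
The plan is to derive both \eqref{QbarDiffEq} and \eqref{QbarInitConds} by specializing the master $q$-difference equation \eqref{JDiffEq} for $J_{k,i}$ and then translating it through the definition \eqref{QbarDef}, in exact parallel with the proofs of the two preceding theorems. Concretely, I would set $q\mapsto q^{2d}$, $a\mapsto q^{-d}$, and $k\mapsto K:=d(e-1)+k$ in \eqref{JDiffEq}, leaving $i$ and $x$ untouched. Because the JSMBP carries the \emph{nonzero} parameter $a=q^{-d}$ (in contrast to the SMBP, where $a=0$ annihilates the second summand), both terms on the right of \eqref{JDiffEq} survive, and the factor $-a=-q^{-d}$ is precisely the coefficient appearing inside the bracket of \eqref{QbarDiffEq}. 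This already accounts for the two-term structure that distinguishes \eqref{QbarDiffEq} from \eqref{QDiffEq}; the top-index terms $\bar Q_{K}$ and $\bar Q_{K+1}$ that arise when $i=1$ are consistent by \eqref{HJ}.

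The one genuine computation is the prefactor bookkeeping. Writing $P(x):=(xq^{2d};q^{2d})_\infty\big/\big[(x^e q^{2e};q^{2e})_\infty\,(xq^d;q^{2d})_\infty\big]$ for the factor in \eqref{QbarDef}, so that $\bar{Q}^{(d,e,k)}_i(x)=P(x)\,J_{K,i}(q^{-d};x;q^{2d})$, I would multiply the specialized \eqref{JDiffEq} through by $P(x)$. The unshifted term $J_{K,i-1}$ converts to $\bar{Q}^{(d,e,k)}_{i-1}(x)$ directly, while the two shifted terms carry argument $xq^{2d}$, so converting them back to $\bar Q$'s introduces the ratio $P(x)/P(xq^{2d})$. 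I expect this to be the main obstacle (though still routine): one must track how each of the three infinite products in $P$ behaves under $x\mapsto xq^{2d}$. Using $(xq^{2d};q^{2d})_\infty=(1-xq^{2d})(xq^{4d};q^{2d})_\infty$, $(x^e q^{2e};q^{2e})_\infty=(x^e q^{2e};q^{2e})_d\,(x^e q^{2e(d+1)};q^{2e})_\infty$, and $(xq^d;q^{2d})_\infty=(1-xq^d)(xq^{3d};q^{2d})_\infty$, the ratio collapses to
\[
\frac{P(x)}{P(xq^{2d})}=\frac{1-xq^{2d}}{(x^e q^{2e};q^{2e})_d\,(1-xq^d)}.
\]
Combined with $(xq^{2d})^{i-1}=x^{i-1}q^{2d(i-1)}$ carried over from \eqref{JDiffEq}, this reproduces the coefficient in \eqref{QbarDiffEq} exactly, completing the derivation of the $q$-difference equation.

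For the initial conditions \eqref{QbarInitConds} I would simply set $x=0$. Each infinite product in $P(x)$ tends to $1$, so $P(0)=1$; and from \eqref{Jdef} the term $a\,xq\,H_{K,i-1}$ carries a factor $x$ and drops out, leaving $J_{K,i}(q^{-d};0;q^{2d})=H_{K,i}(q^{-d};0;q^{2d})$. Reading off \eqref{Hdef} at $x=0$, the prefactor $(axq;q)_\infty/(x;q)_\infty$ equals $1$ and every $n\geqq 1$ summand vanishes through the factor $x^{kn}$ (here $k=K\geqq 1$), so only the $n=0$ term, equal to $1$, remains; hence $\bar{Q}^{(d,e,k)}_i(0)=1$ for each $1\leqq i\leqq K$. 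I would close by remarking that, just as \eqref{H0} identifies $Q_0\equiv 0$ in the SMBP case, here \eqref{H0} together with \eqref{Hneg} evaluates the boundary value $\bar{Q}^{(d,e,k)}_0$ (now nonzero, equal to $q^{-d}$ times a shifted $H_{K,1}$), so that the $i=1$ instance of \eqref{QbarDiffEq} is self-contained and \eqref{QbarDiffEq}--\eqref{QbarInitConds} determine $\bar{Q}^{(d,e,k)}_i(x)$ uniquely as a double power series in $x$ and $q$.
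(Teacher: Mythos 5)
Your proposal is correct and follows essentially the same route as the paper: the paper proves the $Q^{(d,e,k)}_i$ version by combining \eqref{JDiffEq}, the definition \eqref{QDef}, and the $x=0$ evaluation of \eqref{Hdef}, and then declares the $\bar{Q}$ case ``analogous''---which is precisely the specialization $q\mapsto q^{2d}$, $a\mapsto q^{-d}$, $k\mapsto d(e-1)+k$ that you carry out, with the prefactor ratio $P(x)/P(xq^{2d})=\frac{1-xq^{2d}}{(x^eq^{2e};q^{2e})_d(1-xq^d)}$ computed correctly. Your added remarks on the surviving $-a$ term and the nonzero boundary value $\bar{Q}^{(d,e,k)}_0$ via \eqref{H0} and \eqref{Hneg} are accurate details the paper leaves implicit.
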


\section{Two-Variable Rogers-Ramanujan Type Identities}\label{2varRR}
The basic machinery is now in place to derive many Rogers-Ramanujan 
type identities in 
the two variables $x$ and $q$.  While the $x=1$ specialization of each of the 
``$Q$" functions admits a representation
as an infinite product (see~\S \ref{RRS}),
it is advantageous to leave $x$ unspecialized for the time being.  Indeed, we shall see in
\S\ref{PtnRev} that 
many of the ``$Q$" functions enumerate certain restricted classes of integer partitions, where 
the exponent on $q$ enumerates the weight of the partition and the exponent on
$x$ enumerates the length of the partition.   
Bailey's treatment~(\cite{WNB1} and ~\cite{WNB2}), 
unlike that of Slater~(\cite{LJS1951}, \cite{LJS1952}),  
leaves this extra variable unspecialized as long as possible.  (Please note that 
in \cite{WNB1} and~\cite{WNB2}, Bailey's ``$x$" corresponds to our ``$q$" 
and Bailey's ``$a$" corresponds to our ``$x$,"
thus Bailey refers to his two variable identities as ``$a$-generalizations of 
Rogers-Ramanujan type identities."
The choice of variable name in this paper was made to simplify cross references with the 
works of Andrews~\cite{GEA:qDiff}, ~\cite{GEA:top}, ~\cite{AS:attached}.)

  We now examine how to derive many families of two-variable Rogers-Ramanujan
type identities.
\begin{definition}For positive integers $d$, $e$, and $k$, let
\begin{equation}\label{FDef}  
      {F}^{(d,e,k)}_{d(e-1)+k}(x):= 
        \sum_{n=0}^{\infty} x^{en} q^{en^2} {\beta}^{(d,e,k)}_n
         (x^e, q^e),
  \end{equation}
 \begin{equation}\label{FtildeDef}  
      \widetilde{F}^{(d,e,k)}_{d(e-1)+k}(x):= 
        \sum_{n=0}^{\infty} x^{en} q^{en^2} \widetilde{\beta}^{(d,e,k)}_n
         (x^e, q^e),
  \end{equation} and
   \begin{equation}\label{FbarDef}  
      \bar{F}^{(d,e,k)}_{d(e-1)+k}(x):= 
        \sum_{n=0}^{\infty} x^{en} q^{2en^2} \bar{\beta}^{(d,e,k)}_n
         (x^e, q^{2e}).
  \end{equation}
\end{definition}
\begin{theorem} For positive integers $d$, $e$, and $k$, 
  \begin{gather}
      F^{(d,e,k)}_{d(e-1)+k}(x) = Q^{(d,e,k)}_{d(e-1)+k}(x), \label{FQ}\\
     \widetilde{F}^{(d,e,k)}_{d(e-1)+k}(x) = \widetilde{Q}^{(d,e,k)}_{d(e-1)+k}(x),
     \label{FQtilde}\\
      \bar{F}^{(d,e,k)}_{d(e-1)+k}(x) = \bar{Q}^{(d,e,k)}_{d(e-1)+k}(x).
     \label{FQbar}
  \end{gather}
\end{theorem}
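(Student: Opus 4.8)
The plan is to prove all three identities by the same two-move strategy: first rewrite each left-hand $F$-function as an infinite product times an $H$-function, using its definition together with the master evaluation of the preceding theorem, and then convert that $H$-function into the $J$-function that defines the corresponding $Q$-function via the relation \eqref{HJ}. Beyond this, only bookkeeping with infinite products should be required.

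Concretely, for the SMBP identity \eqref{FQ} I would start from the definition \eqref{FDef}, recognize its right-hand side as precisely the left-hand side of \eqref{SMBPinWBL}, and substitute to get $F^{(d,e,k)}_{d(e-1)+k}(x)=\frac{(xq^d;q^d)_\infty}{(x^eq^e;q^e)_\infty}H_{d(e-1)+k,1}(0;x;q^d)$. Then I apply \eqref{HJ} with index $d(e-1)+k$, base $q^d$, and $a=0$ to replace $H_{d(e-1)+k,1}(0;x;q^d)$ by $J_{d(e-1)+k,d(e-1)+k}(0;x;q^d)$; comparing with \eqref{QDef} at $i=d(e-1)+k$ gives exactly $Q^{(d,e,k)}_{d(e-1)+k}(x)$, the product prefactors already coinciding. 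The JSMBP identity \eqref{FQbar} runs identically: combine \eqref{FbarDef} with \eqref{JSMBPinWBL}, apply \eqref{HJ} with base $q^{2d}$ and $a=q^{-d}$ to turn $H_{d(e-1)+k,1}(q^{-d};x;q^{2d})$ into $J_{d(e-1)+k,d(e-1)+k}(q^{-d};x;q^{2d})$, and read off \eqref{QbarDef}; here too the prefactors match verbatim, so this case is immediate.

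The EMBP identity \eqref{FQtilde} is the one requiring care, and I expect it to be the main obstacle. Combining \eqref{FtildeDef} with \eqref{EMBPinWBL} gives $\widetilde F$ as $\frac{(xq^d;q^d)_\infty(-x;q^d)_\infty}{(x^eq^e;q^e)_\infty}H_{k',\frac12}(0;x^2;q^{2d})$ with $k'=\tfrac{d(e-1)+k-1}{2}$, whereas \eqref{QtildeDef} at $i=d(e-1)+k$ carries $J_{k',\,k'+\frac12}(0;x^2;q^{2d})$ against the prefactor $\frac{(x^2q^{2d};q^{2d})_\infty}{(x^eq^e;q^e)_\infty}$. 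Two mismatches must be absorbed simultaneously: the index on the $H$-function is the half-integer $\tfrac12$ rather than $1$, so \eqref{HJ} does not apply off the shelf (note $k'+\tfrac12$ lies strictly between the values $k'$ and $k'+1$ for which \eqref{HJ} is stated); and the numerator products differ, since $(xq^d;q^d)_\infty(-x;q^d)_\infty=(1+x)(x^2q^{2d};q^{2d})_\infty$. My plan is to reconcile both at once by establishing the half-integer analogue of \eqref{HJ}, namely $J_{k',\,k'+\frac12}(0;x^2;q^{2d})=(1+x)\,H_{k',\frac12}(0;x^2;q^{2d})$, in which the factor $1+x$ is exactly the surplus product factor. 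That factor should appear naturally, because the half-integer index feeds a genuine half-power $x=(x^2)^{1/2}$ into the $(1-x^iq^{2ni})$ term of \eqref{Hdef}. I would prove this auxiliary relation by specializing \eqref{Jdef} at $a=0$, where $J_{k,i}(0;X;Q)=H_{k,i}(0;XQ;Q)$, reducing the claim to $H_{k',\,k'+\frac12}(0;x^2q^{2d};q^{2d})=(1+x)\,H_{k',\frac12}(0;x^2;q^{2d})$, and then manipulating the defining series of \eqref{Hdef} (or invoking the corresponding even-modulus identity from the Andrews references). Once this lemma is in hand, substitution yields $\widetilde F^{(d,e,k)}_{d(e-1)+k}(x)=\widetilde Q^{(d,e,k)}_{d(e-1)+k}(x)$, completing the proof.
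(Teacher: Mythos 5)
Your proposal follows the same route as the paper: identify each $F$ with the right-hand side of the corresponding master identity \eqref{SMBPinWBL}--\eqref{JSMBPinWBL}, convert $H$ to $J$, and read off the definition of the corresponding $Q$. For \eqref{FQ} and \eqref{FQbar} your argument coincides with the paper's verbatim (the paper simply declares the $\bar{Q}$ case ``analogous'').

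On \eqref{FQtilde}, however, you are noticeably more careful than the paper, and your instinct that this is ``the case requiring care'' is exactly right. The paper's displayed chain for $\widetilde{F}$ contains two compensating inaccuracies: its first line quotes \eqref{EMBPinWBL} with prefactor $(x^2q^{2d};q^{2d})_\infty$ rather than $(xq^d;q^d)_\infty(-x;q^d)_\infty=(1+x)(x^2q^{2d};q^{2d})_\infty$, and its second line cites \eqref{HJ} to pass from $H_{k',\frac12}$ to $J_{k',k'}$, even though \eqref{HJ} is stated only for second index $1$ and even though \eqref{QtildeDef} at $i=d(e-1)+k$ actually calls for $J_{k',k'+\frac12}$. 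Your auxiliary lemma
\begin{equation*}
J_{k',\,k'+\frac12}(0;x^2;q^{2d})=(1+x)\,H_{k',\frac12}(0;x^2;q^{2d})
\end{equation*}
is precisely what reconciles both discrepancies at once, and it is true: rather than expanding \eqref{Hdef} directly, the quickest proof is to apply the $H$-difference relation underlying \eqref{JDiffEq} (Andrews' Lemma 7.1, valid for real second index) at $i=\tfrac12$, which gives $H_{k',\frac12}-H_{k',-\frac12}=x^{-1}\bigl(J_{k',k'+\frac12}-aJ_{k',k'+\frac32}\bigr)$ in the variables $(x^2;q^{2d})$, and then use \eqref{Hneg} to write $H_{k',-\frac12}=-x^{-1}H_{k',\frac12}$ and set $a=0$. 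So your plan is complete and correct; it supplies the half-integer analogue of \eqref{HJ} that the paper uses implicitly without stating.
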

\begin{proof} First, we establish~\eqref{FQ}:
\begin{eqnarray*}
  {F}^{(d,e,k)}_{d(e-1)+k}(x)  &=& 
    \frac{(x q^{d}; q^{d})_\infty}{(x^e q^e ; q^e)_\infty} 
       H_{d(e-1)+k,1}(0;x;q^{d})
    \mbox{\ (by~\eqref{SMBPinWBL}) }\\
    &=&  \frac{(x q^{d}; q^{d})_\infty}{(x^e q^e ; q^e)_\infty} 
       J_{d(e-1)+k, d(e-1)+k}(0;x;q^{d})
    \mbox{\ (by~\eqref{HJ}) }\\
    &=& {Q}^{(d,e,k)}_{d(e-1)+k}(x) \mbox{\ (by~\eqref{QDef}).}
 \end{eqnarray*}
 Next, we establish~\eqref{FQtilde}:
 \begin{eqnarray*}
  \widetilde{F}^{(d,e,k)}_{d(e-1)+k}(x)  &=& 
    \frac{(x^2 q^{2d}; q^{2d})_\infty}{(x^e q^e ; q^e)_\infty} 
       H_{\frac{d(e-1)+k-1}{2},\frac 12}(0;x^2;q^{2d})
    \mbox{\ (by~\eqref{EMBPinWBL}) }\\
    &=&  \frac{(x^2 q^{2d}; q^{2d})_\infty}{(x^e q^e ; q^e)_\infty} 
       J_{\frac{d(e-1)+k-1}{2},\frac{d(e-1)+k-1}{2}}(0;x^2;q^{2d})
    \mbox{\ (by~\eqref{HJ}) }\\
    &=& \widetilde{Q}^{(d,e,k)}_{d(e-1)+k}(x) \mbox{\ (by~\eqref{QtildeDef}.)}
 \end{eqnarray*}
 The proof of~\eqref{FQbar} is analogous.
\end{proof}
Once the form of ${F}^{(d,e,k)}_{d(e-1)+k}(x)$,  
$\widetilde{F}^{(d,e,k)}_{d(e-1)+k}(x)$, or
$\bar{F}^{(d,e,k)}_{d(e-1)+k}(x)$ is known for a given $(d,e,k)$,  the other
expressions corresponding to $i=1,2,\dots, d(e-1)+k-1$ can be found
via the $q$-difference equations~\eqref{QtildeDiffEq}.  To keep this paper
as self contained as possible, one example will be given to illustrate this here.
Additional examples may be found in~\cite{AVS2003} 
and~\cite{AVS2004}.
\begin{example} Let us find the family of four identities associated with $(d,e,k)=(1,2,3)$
for the EMBP.
By~\eqref{FtildeDef} and~\eqref{E213} we have
\begin{eqnarray} \widetilde{F}^{(1,2,3)}_{4}(x)
&=& \sum_{n=0}^{\infty} \frac{x^{2n} q^{2n^2}}{(-q;q)_n^2}
         \sum_{r=0}^{n} \frac{q^{r(r+1)/2} (-x;q)_r}{(q;q)_r (-xq;q)_{n+r} (q;q)_{n-r}}
         \notag\\
&=& \sum_{n,r\geqq 0} \frac{x^{2n+2r} 
    q^{2n^2+4nr+ r(5r+1)/2}(-x;q)_r}
    {(-q;q)_{n+r}^2 (-xq;q)_{n+2r} (q;q)_r (q;q)_n}.   \label{Ftilde1234}      
\end{eqnarray}
Furthermore, note that we must have
\[ \widetilde{F}^{(1,2,3)}_{0}(x) = 0\] because of~\eqref{H0} and
\[ \widetilde{F}^{(1,2,3)}_{-1}(x) = -x^{-1}q^{-1} \widetilde{F}^{(1,2,3)}_1 (x)\]
because of~\eqref{Hneg}.
 
  Now we use the fact that the $\widetilde{F}^{(1,2,3)}_{i}(x)$ must satisfy 
\eqref{QtildeDiffEq}.  First, consider~\eqref{QtildeDiffEq} with $i=1$:
\[ \widetilde{F}^{(1,2,3)}_{1}(x) - \widetilde{F}^{(1,2,3)}_{-1}(x)
    = x^{-1} q^{-1} \widetilde{F}^{(1,2,3)}_{4}(xq)\] which immediately
implies
\begin{align}\label{Ftilde1231} \widetilde{F}^{(1,2,3)}_{1}(x) &= 
\frac{1}{1+xq} \widetilde{F}^{(1,2,3)}_{4}(xq)\nonumber\\
&=\sum_{n,r\geqq 0} \frac{x^{2n+2r} 
    q^{2n^2+4nr+ 2n+ 5r(r+1)/2}(-xq;q)_r}
    {(-q;q)_{n+r}^2 (-xq;q)_{n+2r+1} (q;q)_r (q;q)_n}.
 \end{align}  

 Next, consider~\eqref{QtildeDiffEq} with $i=4$, which implies
 \begin{align}
  \widetilde{F}^{(1,2,3)}_{2}(x) &=
  \widetilde{F}^{(1,2,3)}_{4}(x) - x^2 q^2 \widetilde{F}^{(1,2,3)}_{1}(xq)
  \notag\\
  &=\sum_{n,r\geqq 0} \frac{x^{2n+2r} 
    q^{2n^2+4nr+ r(5r+1)/2}(-x;q)_r}
    {(-q;q)_{n+r}^2 (-xq;q)_{n+2r} (q;q)_r (q;q)_n} \notag \\
   & \qquad -\sum_{n,r\geqq 0} \frac{x^{2n+2r+2} 
    q^{2n^2+4nr+ 2n+ r(5r+9)/2+2}(-xq;q)_{r+1}}
    {(-q;q)_{n+r}^2 (-xq;q)_{n+2r+2} (q;q)_r (q;q)_n}\notag\\
 &=\sum_{n,r\geqq 0} \frac{x^{2n+2r} 
    q^{2n^2+4nr+ r(5r+1)/2}(-x;q)_r}
    {(-q;q)_{n+r}^2 (-xq;q)_{n+2r} (q;q)_r (q;q)_n} \notag\\
   & \qquad -\sum_{n,r\geqq 0} \frac{x^{2n+2r} 
    q^{2n^2+4nr+ r(5r-1)/2+2}(-xq;q)_{r}}
    {(-q;q)_{n+r}^2 (-xq;q)_{n+2r} (q;q)_{r-1} (q;q)_n}\notag\\
  &=\sum_{n,r\geqq 0} \frac{x^{2n+2r} 
    q^{2n^2+4nr+ r(5r-1)/2}(-xq;q)_{r-1}}
   {(-q;q)_{n+r}^2 (-xq;q)_{n+2r} (q;q)_r (q;q)_n} \notag\\
   & \qquad\times\Big( q^r (1+x) - (1+xq^r)(1+q^{n+r})^2 (1-q^r) \Big). 
    \label{Ftilde1232} 
  \end{align}
Finally,~\eqref{QtildeDiffEq} with $i=2$ implies
\begin{align}
\widetilde{F}^{(1,2,3)}_{3}(x) &= \widetilde{F}^{(1,2,3)}_{2}(xq^{-1})\notag\\
&= \sum_{n,r\geqq 0} \frac{x^{2n+2r} 
    q^{2n^2+4nr-2n+ 5r(r-1)/2}(-x;q)_{r-1}}
    {(-q;q)_{n+r}^2 (-x;q)_{n+2r} (q;q)_r (q;q)_n} \notag\\
    &\qquad\quad\times\Big( q^r (1+xq^{-1}) - (1+xq^{r-1})(1+q^{n+r})^2 (1-q^r) \Big).  
    \label{Ftilde1233}
\end{align}
\end{example}    

\section{Rogers-Ramanujan-Slater Type Identities}\label{RRS}
The ``$Q$" functions simplify to infinite products when $x=1$.  This is, of course,
a crucial feature of the ``$Q$" functions since Rogers-Ramanujan type identities
establish the equivalence of $q$-series with infinite products.

\begin{lemma}
  \begin{gather}
     Q^{(d,e,k)}_i (1) = 
   \frac{(q^{di},q^{d(2ed-2d+2k+1-i)},q^{d(2ed-2d+2k+1)}; q^{d(2ed-2d+2k+1)})}
     {(q^e;q^e)_\infty}\label{QProd}\\
      \widetilde{Q}^{(d,e,k)}_i (1) = 
   \frac{(q^{di},q^{2d(de-d+k)-di},q^{2d(de-d+k)}; q^{2d(de-d+k)})}
     {(q^e;q^e)_\infty} \label{QtildeProd}\\
     \bar{Q}^{(d,e,k)}_i (1) = 
   \frac{(-q^{ d(2i-1)},-q^{4d(de-d+k)-d(2i-1)},q^{4d(de-d+k)}; q^{4d(de-d+k)})}
     {(q^{2e};q^{2e})_\infty}\label{QbarProd}
   \end{gather}
\end{lemma}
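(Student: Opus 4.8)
The plan is to evaluate each function at $x=1$ by reducing the relevant $J$--function of \eqref{Jdef} to a single $H$--function (or a short combination of them), passing to a bilateral theta series, and invoking the Jacobi triple product~\cite[p.~21, Thm.~2.8]{GEA:top}. The engine for the first two identities is the single evaluation
\[
   H_{\kappa,j}(0;Q;Q) = \frac{(Q^{j},Q^{2\kappa+1-j},Q^{2\kappa+1};Q^{2\kappa+1})_\infty}{(Q;Q)_\infty},
\]
valid for real $\kappa,j$ and $|Q|<1$, which I would establish as follows. In \eqref{Jdef} the second term carries the factor $a$, so at $a=0$ one has $J_{\kappa,j}(0;1;Q)=H_{\kappa,j}(0;Q;Q)$. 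In \eqref{Hdef} the limit $a\to0$ sends $a^{n}(a^{-1};q)_n$ to $(-1)^nQ^{\binom n2}$ and sends $(aQ^2;Q)_\infty$ and $(aQ^2;Q)_n$ to $1$; after cancelling $(x;q)_n$ against $(q;q)_n$ with $x=Q$, the factor $1-Q^{(2n+1)j}$ splits the remaining sum into two one-sided pieces. The substitution $n\mapsto -n-1$ shows that the second piece is precisely the negative-index part of the bilateral series $\sum_{n\in\mathbb Z}(-1)^nQ^{(\kappa+\frac12)n^2+(\kappa+\frac12-j)n}$, and the triple product then yields the displayed right-hand side. Specializing $(Q,\kappa,j)=(q^{d},\,d(e-1)+k,\,i)$ gives \eqref{QProd} (using $2\kappa+1=2ed-2d+2k+1$), while $(Q,\kappa,j)=(q^{2d},\,\tfrac{d(e-1)+k-1}{2},\,\tfrac i2)$ gives \eqref{QtildeProd} (using $2\kappa+1=d(e-1)+k$); in both cases the prefactor in \eqref{QDef}, respectively \eqref{QtildeDef}, cancels the $(Q;Q)_\infty$ denominator and leaves $1/(q^e;q^e)_\infty$.

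The third identity \eqref{QbarProd} is the substantive one, since here $a=q^{-d}\neq0$ and both terms of \eqref{Jdef} survive: with $K=d(e-1)+k$,
\[
   J_{K,i}(q^{-d};1;q^{2d}) = H_{K,i}(q^{-d};q^{2d};q^{2d}) - q^{d}H_{K,i-1}(q^{-d};q^{2d};q^{2d}).
\]
I would first record that in \eqref{Hdef} with $a=q^{-d}$, $x=q^{2d}$ and base $q^{2d}$ the Pochhammer ratio telescopes, $(q^{d};q^{2d})_n/(q^{3d};q^{2d})_n=(1-q^{d})/(1-q^{d(2n+1)})$, so that each $H$ becomes $\tfrac{(q^{d};q^{2d})_\infty}{(q^{2d};q^{2d})_\infty}$ times a sum whose only denominator is $1-q^{d(2n+1)}$. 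Forming $H_{K,i}-q^{d}H_{K,i-1}$ term-by-term, the bracket collapses by the factorization
\[
  \bigl(1-q^{2di(2n+1)}\bigr) - q^{d(2n+1)}\bigl(1-q^{2d(i-1)(2n+1)}\bigr)
   = \bigl(1-q^{d(2n+1)}\bigr)\bigl(1+q^{(2i-1)d(2n+1)}\bigr),
\]
which cancels the denominator and leaves the one-sided pieces $\sum_{n\geqq0}q^{2dKn^2+d(2K+1-2i)n}\bigl(1+q^{(2i-1)d(2n+1)}\bigr)$ with \emph{no} alternating sign. As before $n\mapsto -n-1$ identifies the second piece with the negative-index part of $\sum_{n\in\mathbb Z}q^{2dKn^2+d(2K+1-2i)n}$, and the triple product in the form $\sum_{n\in\mathbb Z}q^{An^2+Bn}=(q^{2A};q^{2A})_\infty(-q^{A+B};q^{2A})_\infty(-q^{A-B};q^{2A})_\infty$, with $A=2dK$ and $B=d(2K+1-2i)$, produces the numerator $(-q^{d(2i-1)},-q^{4dK-d(2i-1)},q^{4dK};q^{4dK})_\infty$. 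Finally the prefactor in \eqref{QbarDef} cancels the telescoped $(q^{d};q^{2d})_\infty/(q^{2d};q^{2d})_\infty$, leaving $1/(q^{2e};q^{2e})_\infty$ and hence \eqref{QbarProd}.

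The hard part will be this $\bar Q$ computation: unlike the $a=0$ cases, the two surviving $H$--terms each carry the denominator $1-q^{d(2n+1)}$, and the whole evaluation hinges on the exact cancellation exhibited above, which is what converts the alternating theta series of the first two cases into an unsigned one and thereby accounts for the $(-q^{\,\cdot})$ factors in \eqref{QbarProd}. The remainder is routine bookkeeping; in particular the half-integer parameters $\kappa=\tfrac{d(e-1)+k-1}{2}$ and $j=\tfrac i2$ arising for $\widetilde Q$ cause no difficulty, since \eqref{Hdef} is defined for real indices and $2\kappa+1=d(e-1)+k$ together with $2dj=di$ are integers, so every exponent in the final products is an integer.
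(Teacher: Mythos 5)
Your proposal is correct, and at bottom it travels the same road as the paper: specialize the ``$Q$'' functions at $x=1$, reduce the $J$-function to $H$-functions, and evaluate via the Jacobi triple product. The difference is one of self-containedness. The paper's proof of \eqref{QProd} is a one-line appeal to Andrews' Lemma 7.3 of \cite{GEA:top} (which packages exactly the bilateral-series/triple-product computation you carry out for $H_{\kappa,j}(0;Q;Q)$), and it dismisses \eqref{QtildeProd} and \eqref{QbarProd} as ``analogous.'' You instead derive the $a=0$ evaluation from the definition \eqref{Hdef} --- the reduction $a^n(a^{-1};q)_n\to(-1)^nq^{\binom n2}$, the splitting by $1-Q^{(2n+1)j}$, and the reindexing $n\mapsto-n-1$ are all correct --- and, more substantively, you work out the $a=q^{-d}$ case that the paper leaves implicit. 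That computation checks out: the telescoping $(q^d;q^{2d})_n/(q^{3d};q^{2d})_n=(1-q^d)/(1-q^{d(2n+1)})$, the factorization $(1-u^{2i})-u(1-u^{2i-2})=(1-u)(1+u^{2i-1})$ with $u=q^{d(2n+1)}$ that kills the lone denominator, and the resulting unsigned theta series all verify, and the prefactor $(q^d;q^{2d})_\infty/(q^{2d};q^{2d})_\infty$ cancels against \eqref{QbarDef} exactly as you say. What your version buys is an explicit explanation of why \eqref{QbarProd} acquires the $(-q^{\,\cdot})$ factors while \eqref{QProd} and \eqref{QtildeProd} do not --- namely, the $a=q^{-d}$ combination converts the alternating bilateral series into an unsigned one --- a point the paper's citation-plus-``analogous'' proof never surfaces.
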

\begin{proof} Equation~\eqref{QProd} is true because
\begin{eqnarray*}
Q^{(d,e,k)}_i (1) &=& 
\frac{(q^d;q^d)_\infty }{(x^e q^e ; q^e)_\infty} J_{d(e-1)+k , i}(0;1;q^d)
\mbox{\ \ (by~\eqref{QDef})}\\
&=& \frac{(q^d;q^d)_\infty }{(x^e q^e ; q^e)_\infty}
  \frac{(q^{di}, q^{2d(de-d+k)+d-di}, q^{2d(de-d+k)+d};q^{2d(de-d+k)+d})_\infty}
  {(q^d;q^d)_\infty}\\
  &&
\mbox{\ \ (by~\cite[p. 108, Lemma 7.3]{GEA:top});}
\end{eqnarray*} the proofs equations~\eqref{QtildeProd} and~\eqref{QbarProd} 
are analogous.
\end{proof}

We are now poised to derive the majority of identities in Slater's list via the 
following procedure:
\begin{itemize}
\item Evaluate one of the three multiparameter Bailey pairs for specific values
of $d$, $e$, and $k$.  
\item Insert into one of the three limiting cases of Bailey's lemma (BL).
\item  Derive the full family of $d(e-1)+k$ associated identities via the associated
$q$-difference equations. 
\end{itemize}

Table 1 below summarizes how many of the identities in Slater's list fit into
our multiparameter Bailey pair/$q$-difference equation scheme.

The legend for Table 1 is as follows:
\begin{itemize}
  \item ``SN" stands for ``Slater number" (i.e. the equation number as
it appears in~\cite{LJS1952}).
  \item The ``MBP" column indicates which multiparameter Bailey pair (standard, Euler,
  or Jackson-Slater) is being used.
  \item ``BL" stands for ``limiting case of Bailey's lemma" with the abbreviations
``W", ``T", and ``S" representing~\eqref{WBL},~\eqref{ATNSBL}, and~\eqref{SSBL} respectively.  
   \item The ``notes" column contains references prior to
Slater, where they exist, and other explanatory comments.  
The term ``sign variant" refers to the fact that in some cases Slater used a 
Bailey pair $\alpha_{n}$ which 
amounts to the use of a particular evaluation of one of the SMBP's, but with the
extra factor of $(-1)^{n/d}$ included.  
\end{itemize}

\begin{longtable}{|r|c|c|c|c|l|}\caption{Summary of part of Slater's list}\\
\hline\hline
SN & MBP &$(d,e,k)$ & $i$& BL & Notes\\ \hline
\endfirsthead
\caption{Summary of part of Slater's list (continued)}\\
\hline\hline
SN & MBP &$(d,e,k)$ & $i$& BL & Notes\\ \hline
\endhead
\hline\multicolumn{6}{r}{\emph{Continued on next page}}
\endfoot
\hline
\endlastfoot
  1  & S & (1,1,1) & 1 & W & Euler's pentagonal number theorem\\
  2 &&&&& same as (7) with $q\to\sqrt{q}$\\
  3  & E   & (1,1,1) & 1 & W & same as Euler mod 4~\eqref{euler1} with $q\to-q$\\
  4 & E & (1,1,1) & 1 & T &\\
  6 & S & (1,1,1) & 1 & W & sign variant of (1)\\
  7 & E & (1,1,2) & 2 & W & Euler mod 4~\eqref{euler2}\\
  8 & S & (1,1,2) & 1 & S & special case of Lebesgue's identity~\cite{VAL}\\
  9 & JS & (1,1,1) & 1 & W & \\
  12 & S & (1,1,2) &   & S & special case of Lebesgue's identity~\cite{VAL}\\
 13 & & & & & linear combination of (8) and (12)\\
 14 & S  & (1,1,2) & 1 & W &Second Rogers-Ramanujan~\cite[p. 329 (2)]{LJR1894}\\
 15 & S & (1,2,1) & 1 & W & Rogers~\cite[p. 330 (5)]{LJR1917}\\
 16 & S & (1,2,2) & 1 & T & Rogers~\cite[p. 331]{LJR1894}\\
 18 & S & (1,1,2) & 2 & W &First Rogers-Ramanujan~\cite[p. 328 (2)]{LJR1894}\\
 19 & S & (1,2,1) & 2 & W & Rogers~\cite[p. 339]{LJR1894}\\
 20 & S & (1,2,2) & 2 & T & Rogers~\cite[p. 330]{LJR1894}\\
 21 &   & & & & sign variant of (20)\\
 23 & JS& (1,1,2)& 2 & T & equivalent to (3)\\
 25 & E & (1,1,2) & 2 & T &\\
 27 &  E & (1,2,2) & 1& W& Replace $q$ by $-q$.\\
 29 & JS & (1,1,2) & 1 & T &\\
 31 & S & (1,2,2) & 1 & W & Third Rogers-Selberg~\cite[p. 331 (6)]{LJR1917}\\
 32 & S & (1,2,2) & 2 & W & Second Rogers-Selberg~\cite[p. 342]{LJR1894}\\
 33 & S & (1,2,2) & 3 & W & First Rogers-Selberg~\cite[p. 339]{LJR1894}\\
 34 & S & (1,1,2) & 1 & T & Second G\"ollnitz-Gordon\\
 36 & S & (1,1,2) & 2 & T & First G\"ollnitz-Gordon\\
 38 & JS & (1,1,2) & 1 & W & Second Jackson-Slater\\
 39 & JS & (1,1,2) & 2 & W & First Jackson-Slater~\cite[p. 170, 5th Eq.]{FHJ}\\
 40 & S & (1,3,2) & 1 & W & Third Bailey mod 9~\cite[p. 422, Eq. (1.7)]{WNB1}\\
 41 & S & (1,3,2) & 3 & W & Second Bailey mod 9~\cite[p. 422, Eq. (1.8)]{WNB1}\\
 42 & S & (1,3,2) & 4 & W & First Bailey mod 9~\cite[p. 422, Eq. (1.6)]{WNB1}\\
 44 & S & (2,1,2) & 1 & W & Second Rogers mod 10~\cite[p. 330 (2)]{LJR1917}\\
 46 & S & (2,1,2) & 2 & W & First Rogers mod 10~\cite[p. 330 (2)]{LJR1917}\\
 47 & &&&& linear combination of (54) and (49)\\
 48 &&&&& linear combination of (54) and (49)\\
 49 & &&&& sign variant of (56)\\
 50 & E & (2,1,3) & 1 & W & \\
 51 & E & (2,1,3) & 2 & W & \\
 52 & S & (2,1,2) & 2 & T &\\
 53 & JS &(1,2,2) & 3 & W & \\
 54 & & & & & sign variant of (58)\\
 55 &&&&& Identity (57) with $q$ replaced by $-q$.\\
 56 & JS & (2,1,3) & 1 & W & \\
 57 & JS & (1,2,2) & 1 & W &\\
 58 & JS & (2,1,3) & 3 & W & \\
 59 & S & (2,1,3) & 1 & W & Third Rogers mod 14~\cite[p. 329 (1)]{LJR1917}\\
 60 & S & (2,1,3) & 2 & W & Second Rogers mod 14~\cite[p. 329 (1)]{LJR1917}\\
 61 & S & (2,1,3) & 3 & W & First Rogers mod 14~\cite[p. 341, Ex. 2]{LJR1894}\\
 66 &&&&& linear combination of (71) and (68)\\
 67 &&&&& linear combination of (71) and (68)\\
 68 &&&&& sign variant of (69)\\
 69 & JS & (2,1,3) & 1 & T & \\
 70 & JS & (2,1,3) & 2 & T & sign variant\\
 71 &&&&& sign variant of (72)\\
 72 & JS & (2,1,3) & 3 & T & \\
 79 & S & (2,1,3) & 3 & T & \\
 84 &&&&&identical to (9)\\
 85 & JS & (1,1,1) & 1 & W &\\
 87 &&&&& identical to (27)\\
 88 &&&&& linear combination of (90) and (91)\\
 89 &&&&& linear combination of (93) and (91)\\
 90 & S & (3,1,4) & 1 &W & Fourth Dyson mod 27\\
 91 & S & (3,1,4) & 2 & W &Third Dyson mod 27\\
 92 & S & (3,1,4) & 3 & W &Second Dyson mod 27\\
 93 & S & (3,1,4) & 4 & W & First Dyson mod 27\\
 111 &&&&& linear combination of (114) and (115)\\
 112 &&&&& linear combination of (115) and (116)\\
 113 &&&&& linear combination of (114) and (116)\\
 114 & S & (3,1,4) & 4 & T &\\
 115 & S & (3,1,4) & 3 & T &\\
 116 & S & (3,1,4) & 2 & T & \\
 \hline
\end{longtable}

The following identity follows from inserting 
$\widetilde{\beta}^{(2,1,3)}_{n}(x,q)$ into \eqref{WBL};
it is not included in Slater's list, but can be regarded as
the ``missing partner" to Slater's identities (50) and (51).
\begin{equation}\label{LJS50A}
\sum_{n=0}^\infty \frac{ q^{n^2} (-q;q^2)_n}{(q;q)_{2n}} 
  = \frac{(q^6,q^6,q^{12};q^{12})_\infty}{(q;q)_\infty}.
\end{equation}
Identity~\eqref{LJS50A} is due to Ramanujan~\cite[p. 254, Eq. (11.3.4)]{RLN1},
having been recorded by him in the Lost Notebook.

 Inserting the same Bailey pair into \eqref{ATNSBL} gives
the missing partner to Slater's (68), (70),  and (71):
\begin{equation}\label{MP70a}
\sum_{n=0}^\infty \frac{ q^{n^2} (-q^2;q^4)_n}{(q;q)_{2n} (-q^2;q^2)_{n}} 
  = \frac{(q^8,q^8,q^{16};q^{16})_\infty (-q;q^2)_\infty}{(q^2;q^2)_\infty}.
\end{equation}
To the best of my knowledge,~\eqref{MP70a} has not previously
appeared in the literature.

Several dozen double sum Rogers-Ramanujan type identities arising from the SMBP
are listed in~\cite{AVS2005}.  Below I have recorded some identities arising
from the EMBP and JSMBP.  Many more could be derived using the information
in this paper.
\begin{equation}
\sum_{n,r\geqq 0} \frac{q^{2n^2 + 4nr + r(5r+1)/2 } (-1;q)_r }
  {(-q;q)_{n+r}^2 (-q;q)_{n+2r} (q;q)_r (q;q)_n } 
   = \frac{(q^4,q^4,q^8;q^8)_\infty}{(q^2;q^2)_\infty}
   \mbox{ ($\widetilde{\beta}^{(1,2,3)}_{n}$ into \eqref{WBL})}
\end{equation}

\begin{multline}
\sum_{n,r\geqq 0} \frac{ 
    q^{2n^2+4nr+ 2n+ 5r(r+1)/2}(-q;q)_r}
    {(-q;q)_{n+r}^2 (-q;q)_{n+2r+1} (q;q)_r (q;q)_n} = 
    \frac{(q,q^7,q^8;q^8)_\infty }{(q^2;q^2)_{\infty}}\\
 \mbox{ (\eqref{Ftilde1231} with $x=1$)}
 \end{multline} 
 
 \begin{multline}
 \sum_{n,r\geqq 0} \frac{ 
    q^{2n^2+4nr+ r(5r-1)/2}(-q;q)_{r-1}}
    {(-q;q)_{n+r}^2 (-q;q)_{n+2r} (q;q)_r (q;q)_n} 
    \Big( 2q^r - (1+q^{n+r})^2 (1-q^{2r}) \Big)\\
    =\frac{(q^2,q^6,q^8;q^8)_\infty }{(q^2;q^2)_{\infty}}
    \mbox{\ \ \ (\eqref{Ftilde1232} with $x=1$)}
    \end{multline} 
    
 \begin{multline}
\sum_{n,r\geqq 0} \frac{ 
    q^{2n^2+4nr-2n+ 5r(r-1)/2}(-1;q)_{r-1}}
    {(-q;q)_{n+r}^2 (-1;q)_{n+2r} (q;q)_r (q;q)_n} 
    \Big( q^r (1+q^{-1}) - (1+q^{r-1})(1+q^{n+r})^2 (1-q^r) \Big) \\     
     =\frac{(q^3,q^5,q^8;q^8)_\infty }{(q^2;q^2)_{\infty}}
     \mbox{\ \ \ (\eqref{Ftilde1233} with $x=1$) }
    \end{multline}  
     
\begin{equation}
\sum_{n,r\geqq 0} \frac{q^{2n^2 + 4nr + 4r^2} (q;q^2)_r }
  {(q^2;q^2)_r (-q;q)_{2r} (q^2;q^2)_n } 
   = \frac{(q^5,q^5,q^{10};q^{10})_\infty}{(q^2;q^2)_\infty}
   \mbox{ ( $\widetilde{\beta}^{(1,2,4)}_{n}$ into \eqref{WBL})}
\end{equation}

\begin{multline}
\sum_{n,r\geqq 0} \frac{q^{2n^2 + 4 r^2+4nr} (-q^2;q^4)_{n+r} }
  { (-q^2;q^2)_{2n+2r} (q;q)_{2r} (q^4;q^4)_{n} } 
   = \frac{(-q^{5},-q^{7},q^{12};q^{12})_\infty (-q^2;q^4)_\infty}
   {(q^4;q^4)_\infty}\\
   \mbox{ ( $\bar{\beta}^{(1,2,3)}_{n}$ into \eqref{ATNSBL})}
\end{multline}

\begin{multline}
\sum_{n,r\geqq 0} \frac{q^{r + 2n^2 + 3r^2 + 4nr }(-q^2;q^4;n+r) (-1;q^2)_r }
  {(-q^2;q^2)_{n+r}^2 (-q^2;q^2)_{n+2r} (q^2;q^2)_r (q^2;q^2)_n } 
   = \frac{(q^6,q^6,q^{12};q^{12})_\infty (-q^2;q^4)}{(q^4;q^4)_\infty}
  \\ \mbox{\ ( $\widetilde{\beta}^{(1,2,3)}_{n}$ into \eqref{ATNSBL})}
\end{multline}

\begin{multline}
\sum_{n,r\geqq 0} \frac{q^{3n^2 + 6nr + 5r^2} (-q^3;q^6)_{n+r}
(q;q^2)_r (q^2;q^2)_{3n+2r}}
  {(q^6;q^6)_{2n+2r} (q^2;q^2)_r (-q;q)_{2r} (q^6;q^6)_n } 
   = \frac{(q^6,q^6,q^{12};q^{12})_\infty (-q^3;q^6)_\infty}{(q^6;q^6)_\infty}\\
   \mbox{ ( $\widetilde{\beta}^{(1,6,4)}_{n}$ into \eqref{ATNSBL})}
\end{multline}

\begin{equation} \label{mod16}
\sum_{n,r\geqq 0} \frac{q^{n^2 + 2r^2}   }
  { (q;q^2)_{n} (q;q)_{2r} (q;q)_{n-2r} } 
   = \frac{(-q^{7},-q^{9},q^{16};q^{16})_\infty}{(q;q)_\infty}
  \mbox{ ( $\bar{\beta}^{(2,1,4)}_{n}$ into \eqref{WBL})}
\end{equation}

\begin{multline}
\sum_{n,r\geqq 0} \frac{q^{4n^2 + 6r^2+8nr}  }
  { (-q^2;q^2)_{2n+2r} (q;q)_{2r} (q^4;q^4)_{n} } 
   = \frac{(-q^{7},-q^{9},q^{16};q^{16})_\infty}{(q^4;q^4)_\infty}\\
   \mbox{ ( $\bar{\beta}^{(1,2,3)}_{n}$ into \eqref{WBL})}
\end{multline}

\begin{multline}
\sum_{n,r\geqq 0} \frac{q^{2n^2 + 4nr + 6r^2} (-q^2;q^4)_{n+r} (-q;q^2)_{2r}  }
  { (q^4;q^4)_{2r} (q^4;q^4)_{n} } 
   = \frac{(-q^{7},-q^{9},q^{16};q^{16})_\infty (-q^2;q^4)_\infty}{(q^4;q^4)_\infty}\\
   \mbox{ ( $\bar{\beta}^{(1,2,4)}_{n}$ into \eqref{ATNSBL})}
\end{multline}

\begin{equation}
\sum_{n,r\geqq 0} \frac{q^{n^2 + 2nr + r(3r+1)/2} (-1;q^2)_r }
  {(-q;q)_{n+r} (q;q)_r (q;q)_n (q;q^2)_r } 
   = \frac{(q^8,q^8,q^{16};q^{16})_\infty}{(q;q)_\infty}
   \mbox{ ( $\widetilde{\beta}^{(2,1,4)}_{n}$ into \eqref{WBL})}
\end{equation}

\begin{multline}
\sum_{n,r\geqq 0} \frac{q^{n^2 + 2nr +3r^2} (q^2;q^2)_{n+r}  }
  { (q;q)_{2n+2r} (q^4;q^4)_{r} (q^4;q^4)_{n} } 
   = \frac{(q^{8},q^{8},q^{16};q^{16})_\infty (-q;q^2)_\infty}{(q^2;q^2)_\infty}\\
   \mbox{ ( $\widetilde{\beta}^{(2,2,4)}_{n}$ into \eqref{ATNSBL})}
\end{multline}

\begin{multline}
\sum_{n,r\geqq 0} \frac{q^{2n^2 + 4nr + 6r^2} (-q^2;q^4)_{n+r} (q^2;q^4)_{r} }
  { (-q^2;q^2)_{2r} (q^4;q^4)_{r} (q^4;q^4)_n } 
   = \frac{(q^8,q^8,q^{16};q^{16})_\infty (-q^2;q^4)_\infty}{(q^4;q^4)_\infty}\\
   \mbox{ ( $\widetilde{\beta}^{(1,2,4)}_{n}$ into \eqref{ATNSBL})}
\end{multline}

\begin{multline}
\sum_{n,r\geqq 0} \frac{q^{6n^2 + 12nr + 8r^2} (q;q^2)_r (q^2;q^2)_{3n+2r}}
  {(q^6;q^6)_{2n+2r} (q^2;q^2)_r (-q;q)_{2r} (q^6;q^6)_n } 
   = \frac{(q^9,q^9,q^{18};q^{18})_\infty}{(q^6;q^6)_\infty}\\
   \mbox{ ( $\widetilde{\beta}^{(1,6,4)}_{n}$ into \eqref{WBL})}
\end{multline}

\begin{multline}
\sum_{n\geqq 0} \frac{q^{n^2}}{(q;q)_n} \left( 
1+\sum_{r\geqq 1} \frac{q^{2r^2+2nr } (-q;q)_{r-1}}{(q;q)_r (q;q^2)_r}\right)
= \frac{(-q^9, -q^{11}, q^{20}; q^{20})_\infty}{(q;q)_\infty}\\
\mbox{ ( $\bar{\beta}^{(2,1,5)}_{n}$ into \eqref{WBL})}
\end{multline}

\begin{multline}
\sum_{n,r\geqq 0} \frac{q^{4n^2 + 8r^2+8nr} (-q;q^2)_{2r}  }
  { (q^4;q^4)_{2r}  (q^4;q^4)_{n} } 
   = \frac{(-q^{9},-q^{11},q^{20};q^{20})_\infty}{(q^4;q^4)_\infty}\\
   \mbox{ ( $\bar{\beta}^{(1,2,4)}_{n}$ into \eqref{WBL})}
\end{multline}

\begin{multline}
\sum_{n,r\geqq 0} \frac{q^{n^2 + 2nr +2r^2} (-q;q^2)_r }
  { (q;q)_{2r} (q;q)_{n} } 
   = \frac{(q^{10},q^{10},q^{20};q^{20})_\infty}{(q;q)_\infty}
   \mbox{ ( $\widetilde{\beta}^{(2,1,5)}_{n}$ into \eqref{WBL})}
\end{multline}

\begin{multline}
\sum_{n,r\geqq 0} \frac{q^{n^2 + 2r^2+2nr} (-q;q^2)_{n+r} (-q;q^2)_{r} }
  { (-q;q)_{2n+2r} (q;q)_{2r} (q^2;q^2)_{n} } 
   = \frac{(q^{10},q^{10},q^{20};q^{20})_\infty (-q;q^2)_\infty}{(q^2;q^2)_\infty}
   \\ \mbox{ ( $\widetilde{\beta}^{(2,2,5)}_{n}$ into \eqref{ATNSBL})}
\end{multline}

\begin{multline} 
\sum_{n,r\geqq 0} \frac{q^{n^2 +4r^2} (-q;q^2)_{n}   }
  { (q^2;q^4)_{n} (q^2;q^2)_{2r} (q^2;q^2)_{n-2r} } 
   = \frac{(-q^{10},-q^{14},q^{24};q^{24})_\infty (-q;q^2)_\infty}
   {(q^2;q^2)_\infty}\\
  \mbox{ ( $\bar{\beta}^{(2,1,4)}_{n}$ into \eqref{ATNSBL})}
\end{multline}

\begin{multline}\label{mod24}
\sum_{n,r\geqq 0} \frac{q^{2n^2 + 4nr +4r^2} }
  { (q;q)_{2n} (q;q)_{2r} (-q;q)_{2n+2r} } 
   = \frac{(-q^{11},-q^{13},q^{24};q^{24})_\infty}{(q^2;q^2)_\infty}\\
   \mbox{ ( $\bar{\beta}^{(2,2,4)}_{n}$ into \eqref{WBL})}
\end{multline}

\begin{multline}
\sum_{n,r\geqq 0} \frac{q^{2n^2 + 4nr +4r^2} (q^4;q^4)_{n+r} }
  { (q^2;q^2)_{2n+2r} (q^4;q^4)_{r} (q^4;q^4)_{n} } 
   = \frac{(q^{12},q^{12},q^{24};q^{24})_\infty}{(q^2;q^2)_\infty}\\
   \mbox{ ( $\widetilde{\beta}^{(2,2,4)}_{n}$ into \eqref{WBL})}
\end{multline}

\begin{multline}
\sum_{n,r\geqq 0} \frac{q^{n^2 + 2nr + 2r^2 + r} (-q;q^2)_{n+r} (-1;q^4)_r }
  {(-q^2;q^2)_{n+r} (q^2;q^2)_r (q^2;q^2)_n (q^2;q^4)_r } 
   = \frac{(q^{12},q^{12},q^{24};q^{24})_\infty (-q;q^2)_\infty}{(q^2;q^2)_\infty}
 \\  \mbox{ ( $\widetilde{\beta}^{(2,1,4)}_{n}$ into \eqref{ATNSBL})}
\end{multline}

\begin{multline}
\sum_{n\geqq 0} \frac{q^{2n^2}}{(q^2;q^2)_n} \left( 
\frac{1}{(-q;q)_{2n}}
+\sum_{r\geqq 1} \frac{q^{3r^2+4nr } (-q;q)_{r-1}}{(q;q)_r (q;q^2)_r (-q;q)_{2n+2r}}
\right)\\
= \frac{(-q^{13}, -q^{15}, q^{28}; q^{28})_\infty}{(q^2;q^2)_\infty}
\mbox{\ \ \ ( $\bar{\beta}^{(2,2,5)}_{n}$ into \eqref{WBL})}
\end{multline}

\begin{equation}
\sum_{n,r\geqq 0} \frac{q^{2n^2 + 3r^2+4nr} (-q;q^2)_{r} }
  { (-q;q)_{2n+2r} (q;q)_{2r} (q^2;q^2)_{n} } 
   = \frac{(q^{14},q^{14},q^{28};q^{28})_\infty}{(q^2;q^2)_\infty}
   \mbox{ ( $\widetilde{\beta}^{(2,2,5)}_{n}$ into \eqref{WBL})}
\end{equation}

\begin{multline}
\sum_{n,r\geqq 0} \frac{q^{n^2 + 2nr +3r^2} (-q;q^2)_{n+r} (-q^2;q^4)_r }
  { (q^2;q^2)_{2r} (q^2;q^2)_{n} } 
   = \frac{(q^{16},q^{16},q^{32};q^{32})_\infty (-q;q^2)_\infty}
   {(q^2;q^2)_\infty}\\
   \mbox{ ( $\widetilde{\beta}^{(2,1,5)}_{n}$ into \eqref{ATNSBL})}
\end{multline}

\section{Partition Identities}\label{Partitions}

\subsection{Review of Definitions, Notation, and Classical Infinite Family Partition Identities}
\label{PtnRev}
\begin{definition} A \emph{partition} $\pi$ of an integer $n$ is a nonincreasing 
finite sequence 
$\pi$ of positive integers 
$ ( \pi_1, \pi_2, \dots, \pi_s )$ such that 
$\sum_{h=1}^s \pi_h=n.$  Each $\pi_h$ is called a \emph{part} of the partition
$\pi$.  If $\pi=( \pi_1, \dots, \pi_s )$ is a partition of $n$, then $s$ is called
the \emph{length} of $\pi$ and $n$ is called the \emph{weight} of $\pi$.  The
\emph{multiplicity} of the positive integer $r$ in the partition $\pi$, $m_r(\pi)$, 
is the number
of times $r$ appears as a part of $\pi$, i.e.
$m_r (\pi) = \#\{ h : \pi_h = r \}.$
\end{definition}

\begin{definition} An \emph{overpartition} of an integer $n$ is a 
nonincreasing finite sequence of positive integers whose sum is $n$ in which
the first occurrence of a given number may be overlined.
\end{definition}
\noindent Thus, while there are only three partitions of $3$, namely $(3)$, $(2,1)$, and
$(1,1,1)$; there are eight overpartitions of $3$, namely $(3)$, $(\bar{3})$,
$(2,1)$, $(\bar{2},1)$, $(2,\bar{1})$, $(\bar{2}, \bar{1})$, $(1,1,1)$
$(\bar{1},1,1)$.

The Rogers-Ramanujan identities were first interpreted combinatorially by
MacMahon~\cite{PAM} and Schur~\cite{IS}.  
In 1961, Basil Gordon~\cite{BG} embedded the
combinatorial Rogers-Ramanujan identities as the $k=2$ case of the
following infinite family of partition identities:
\begin{theorem}[Gordon's combinatorial generalization of Rogers-Ramanujan]
Let $1\leqq i\leqq k$.
Let $\G_{k,i}(n)$ denote the number of partitions $\pi$ of $n$ such that 
$m_1(\pi)\leqq i-1$ and $\pi_{j}-\pi_{j+k-1}\geqq 2$.  
Let $\h_{k,i}(n)$ denote the number of partitions of $n$ into parts 
$\not\equiv 0,\pm i\pmod{2k+1}$.  Then $\G_{k,i}(n) = \h_{k,i}(n)$ for all $n$.
\end{theorem}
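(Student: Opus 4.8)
The plan is to reduce Gordon's theorem to the analytic and $q$-difference-equation machinery already developed in this paper, by exhibiting a single generating function that both $\G_{k,i}$ and $\h_{k,i}$ compute. The product side is immediate: the generating function $\sum_{n\geqq 0}\h_{k,i}(n)q^n$ is the reciprocal of $\prod_{j\not\equiv 0,\pm i \!\!\pmod{2k+1}}(1-q^j)$, and by the Jacobi triple product identity this equals $(q^i,q^{2k+1-i},q^{2k+1};q^{2k+1})_\infty/(q;q)_\infty$. But taking $(d,e,k)=(1,1,k)$ in \eqref{QProd} gives exactly $Q^{(1,1,k)}_i(1)=(q^i,q^{2k+1-i},q^{2k+1};q^{2k+1})_\infty/(q;q)_\infty$, so it suffices to show that the $\G$ side also evaluates to $Q^{(1,1,k)}_i(1)$. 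All the genuine content therefore lies on the $\G$ side.

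For that side I would introduce a length-refined generating function: for each $i$ set $D_{k,i}(x):=\sum_{\pi}x^{\ell(\pi)}q^{|\pi|}$, the sum being over partitions $\pi$ with $m_1(\pi)\leqq i-1$ and $\pi_j-\pi_{j+k-1}\geqq 2$, where $\ell(\pi)$ is the length and $|\pi|$ the weight, so that $D_{k,i}(1)=\sum_{n}\G_{k,i}(n)q^n$. The central step is to prove that $D_{k,i}(x)$ satisfies the $q$-difference equation \eqref{QDiffEq} with $(d,e,k)=(1,1,k)$, which in this case collapses to $D_{k,i}(x)=D_{k,i-1}(x)+(xq)^{i-1}D_{k,k-i+1}(xq)$, i.e.\ the $a=0$ specialization of \eqref{JDiffEq}. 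This is established combinatorially: one sorts the admissible partitions by the multiplicity of the part $1$, and observes that adding $1$ to every part (which multiplies the weight by $q^{\ell(\pi)}$ and so implements the substitution $x\mapsto xq$) carries a $(k,k-i+1)$-admissible partition to a $(k,i)$-admissible one, the factor $(xq)^{i-1}$ accounting for the prescribed block of small parts. Matching the resulting term-by-term decomposition against the two summands of the recurrence is precisely Gordon's combinatorial argument.

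Once the recurrence holds, I would check the initial conditions: setting $x=0$ kills every nonempty partition and leaves only the empty partition, so $D_{k,i}(0)=1$ for $1\leqq i\leqq k$, matching \eqref{QInitConds}. Since \eqref{QDiffEq} together with \eqref{QInitConds} determines $Q^{(1,1,k)}_i(x)$ uniquely as a formal power series in $x$ (the same uniqueness principle that underlies the entire $Q$-function scheme), it follows that $D_{k,i}(x)=Q^{(1,1,k)}_i(x)=J_{k,i}(0;x;q)$. Specializing to $x=1$ and invoking \eqref{QProd} then yields $\sum_{n}\G_{k,i}(n)q^n=(q^i,q^{2k+1-i},q^{2k+1};q^{2k+1})_\infty/(q;q)_\infty=\sum_{n}\h_{k,i}(n)q^n$, and comparing coefficients of $q^n$ gives $\G_{k,i}(n)=\h_{k,i}(n)$ for all $n$.

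The main obstacle is the combinatorial verification that $D_{k,i}(x)$ satisfies \eqref{QDiffEq}. Setting up the length-refined bijection so that the bound $m_1(\pi)\leqq i-1$ interacts correctly with the ``add $1$ to each part'' operation, and producing the term $(xq)^{i-1}D_{k,k-i+1}(xq)$ with exactly the right powers of $x$ and $q$, is the delicate heart of the proof; everything else is bookkeeping once \eqref{QProd} and the uniqueness of solutions to \eqref{QDiffEq}--\eqref{QInitConds} are in hand.
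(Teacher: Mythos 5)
Your proposal is correct, but it is worth noting that the paper does not prove this theorem at all: it is stated as a classical result and attributed to Gordon~\cite{BG}, so there is no internal proof to compare against. What you have written is essentially Andrews' $q$-difference-equation proof of Gordon's theorem (\cite[Ch.~7, Thm.~7.5 and Thm.~7.8]{GEA:top}), which is in fact the engine the paper relies on later: in \S 6.2.1 the author quotes precisely the identity $J_{k,i}(0;x;q)=\sum_{n,m}\G'_{k,i}(m,n)x^mq^n$ from that source. Your reduction is sound at every stage. The specialization $(d,e,k)=(1,1,k)$ of \eqref{QDiffEq} does collapse to $Q_i(x)=Q_{i-1}(x)+(xq)^{i-1}Q_{k-i+1}(xq)$ since the factor $(1-xq)$ cancels against $(xq;q)_1$, and \eqref{QProd} does give $(q^i,q^{2k+1-i},q^{2k+1};q^{2k+1})_\infty/(q;q)_\infty$. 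The combinatorial verification of the recurrence is the standard one: $D_{k,i}-D_{k,i-1}$ enumerates admissible partitions with exactly $i-1$ ones, and deleting those ones and subtracting $1$ from every remaining part yields a $(k,k-i+1)$-admissible partition; the one point you should make explicit is that in the reverse direction the bound $m_1(\pi')\leqq k-i$ is exactly what guarantees the difference condition $\pi_j-\pi_{j+k-1}\geqq 2$ across the boundary between the shifted parts and the appended ones (most cleanly seen via the equivalent form $m_j(\pi)+m_{j+1}(\pi)\leqq k-1$ noted in the paper's Remark). Two small quibbles: the product evaluation of $\sum\h_{k,i}(n)q^n$ needs no Jacobi triple product, only cancellation of the excluded residue classes from $(q;q)_\infty$ (the triple product is instead hidden inside the proof of \eqref{QProd}); and you should also record $D_{k,0}(x)=0$, matching $J_{k,0}(0;x;q)=0$, so that the $i=1$ instance of the recurrence closes. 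With those details supplied, your argument is a complete and self-contained proof, whereas the paper simply defers to the literature.
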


\begin{remark}
When comparing to the Andrews-Santos theorem below, it will be useful to
note that the second condition of $\G_{k,i}(n)$ is equivalent to
``$m_{j}(\pi) + m_{j+1}(\pi)\leqq k-1$ for any positive integer $j$."
\end{remark}

Just as the combinatorial Rogers-Ramanujan identities are the $k=2$ cae of
Gordon's theorem, combinatorial versions of the Euler identities~\eqref{euler1} and
~\eqref{euler2} form the $k=2$ case of the theorem of
D.~Bressoud~\cite{DMB}.

\begin{theorem}[Bressoud's even modulus partition identity]
\label{BressThm}
 Let $1\leqq i <  k$.
Let $\B_{k,i}(n)$ denote the number of partitions $\pi$ of $n$ such that 
\begin{itemize}
    \item $m_1(\pi) \leqq i-1$,
    \item $\pi_j - \pi_{j+k-1} \geqq 2,$ and
    \item if $\pi_j - \pi_{j+k-2} \leqq 1$, 
          then $\sum_{h=0}^{k-2} \pi_{j+h} \equiv (i-1)\pmod{2}.$ 
 \end{itemize}
Let $\C_{k,i}(n)$ denote the number of partitions of $n$ into parts 
$\not\equiv 0, \pm  i\pmod{2k}$.
Then $\B_{k,i}(n) = \C_{k,i}(n)$ for all $n$.
\end{theorem}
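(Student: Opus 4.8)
The plan is to run the two-variable $q$-difference-equation argument that drives the analytic part of this paper, but with the ``$\widetilde{Q}$'' functions reinterpreted combinatorially; the even modulus $2k$ and the parity clause in $\B_{k,i}$ signal that the relevant functions are those attached to the EMBP with $(d,e)=(1,1)$. The product side is already in hand: the generating function for $\C_{k,i}$ is
\[
\sum_{n\geqq 0}\C_{k,i}(n)\,q^n
=\prod_{\substack{m\geqq 1\\ m\not\equiv 0,\pm i\ (\mathrm{mod}\ 2k)}}\frac{1}{1-q^m}
=\frac{(q^i,q^{2k-i},q^{2k};q^{2k})_\infty}{(q;q)_\infty},
\]
which is precisely $\widetilde{Q}^{(1,1,k)}_i(1)$ by \eqref{QtildeProd} with $(d,e)=(1,1)$. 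It therefore suffices to show that the weight generating function for $\B_{k,i}$ also equals $\widetilde{Q}^{(1,1,k)}_i(1)$.

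To that end I would introduce the length-refined series $\BB_{k,i}(x):=\sum_{\pi}x^{\ell(\pi)}q^{|\pi|}$, summed over all partitions $\pi$ meeting the three defining conditions of $\B_{k,i}$ (so that $x$ tracks length and $q$ tracks weight, exactly as in \S\ref{PtnRev}), extended to $i\leqq 0$ by \eqref{H0} and \eqref{Hneg} in the manner of the worked example of \S\ref{2varRR}. The goal is the identity $\BB_{k,i}(x)=\widetilde{Q}^{(1,1,k)}_i(x)$ of formal double power series. Since \eqref{QtildeDiffEq} together with \eqref{QtildeInitConds} determine the $\widetilde{Q}^{(1,1,k)}_i(x)$ uniquely, it is enough to check that the $\BB_{k,i}(x)$ satisfy the same system. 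The initial conditions are immediate, since at $x=0$ only the empty partition survives and $\BB_{k,i}(0)=1$ for every admissible $i$, matching \eqref{QtildeInitConds}. Setting $x=1$ at the end and invoking the product evaluation above then gives $\B_{k,i}(n)=\C_{k,i}(n)$ for all $n$, with the case $k=2$ recovering the Euler identities \eqref{euler1}--\eqref{euler2}.

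The substance is the combinatorial derivation of \eqref{QtildeDiffEq} specialized to $(d,e)=(1,1)$, which after simplifying $(1-x^2q^2)/(1-xq)=1+xq$ reads
\[
\BB_{k,i}(x)=\BB_{k,i-2}(x)+(1+xq)\,x^{i-2}q^{i-2}\,\BB_{k,k-i+1}(xq).
\]
I would establish this by a dissection on the smallest parts combined with the standard ``subtract a row of $1$'s'' manipulation: the dilation $x\mapsto xq$ corresponds to adjoining $1$ to every part (raising the weight by the length), the factor $x^{i-2}q^{i-2}$ records $i-2$ parts equal to $1$, and splitting off the extremal cases of $m_1(\pi)$ produces the two terms on the right. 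Crucially, the index drops by two rather than by one (as it would in the odd-modulus Gordon setting): this is the combinatorial shadow of the even modulus and of the alternating $(-1)^{n}$ structure built into the EMBP, and it is compatible with the third defining condition because lowering the bound $i$ by $2$ preserves the residue $(i-1)\bmod 2$.

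The main obstacle will be tracking that third condition---the parity requirement $\sum_{h=0}^{k-2}\pi_{j+h}\equiv(i-1)\pmod 2$ on each block of $k-1$ nearly equal consecutive parts---through the two operations above. The delicate point is the boundary of the partition, where such a near-equal block abuts the smallest parts: I must verify that adjoining a $1$ to every part and then peeling off copies of the part $1$ interact correctly with the clause ``if $\pi_j-\pi_{j+k-2}\leqq 1$'', so that no admissible partition is double-counted or dropped as the block structure shifts. Establishing that this dissection is a genuine weight- and length-preserving bijection realizing each term of the displayed recurrence is where the real work lies; once it is in place, the uniqueness attached to \eqref{QtildeDiffEq}--\eqref{QtildeInitConds} closes the argument.
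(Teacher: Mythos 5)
First, a point of orientation: the paper does not prove this theorem. It is stated as background and attributed to Bressoud \cite{DMB} (with the odd-$k$ case credited to Andrews in the remark that follows), and the only place the paper touches its content is in \S\ref{Partitions}, where the identification of $J_{(k-1)/2,\,i/2}(0;x^2;q^{2d})$ with the length-refined generating function of the $\B_{k,i}$-partitions is again simply quoted from \cite[p.~67, 1st Eq.]{DMB}. So there is no in-paper argument to compare yours against; the relevant comparison is with Bressoud's original proof, whose overall shape (a system of $q$-difference equations plus uniqueness of the solution, in the style of Andrews' proof of Gordon's theorem in \cite[Ch.~7]{GEA:top}) your plan correctly reproduces. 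Your identification of the product side with $\widetilde{Q}^{(1,1,k)}_i(1)$ via \eqref{QtildeProd} is also correct.

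As a proof, however, the proposal has a genuine gap, and you have located it yourself: the assertion that the length-refined series $\BB_{k,i}(x)$ satisfies \eqref{QtildeDiffEq} with $(d,e)=(1,1)$ \emph{is} the content of the theorem. Everything else in the outline is routine --- the product evaluation is Jacobi's triple product, the initial conditions \eqref{QtildeInitConds} are immediate, and uniqueness of the solution of the system is formal. The dissection you describe (split on $m_1(\pi)\in\{i-2,i-1\}$, delete the $1$'s, subtract $1$ from every remaining part, reflect the index to $k-i+1$) is the right move, and your observation that the parity clause survives is correct and necessary: subtracting $1$ from each of the $k-1$ parts of a near-equal block shifts the residue from $i-1$ to $i-k\equiv (k-i+1)-1\pmod 2$. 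But the boundary analysis you defer --- near-equal blocks straddling the deleted $1$'s and the smallest surviving parts --- is exactly where Bressoud's difference-plus-parity conditions depart from Gordon's, and it cannot be waved at; until that bijection is exhibited the recurrence for $\BB_{k,i}$ is unproved. A second, smaller gap: you extend $\BB_{k,i}$ to $i\leqq 0$ ``by \eqref{H0} and \eqref{Hneg},'' but those are properties of the analytic functions $H_{k,i}$, not of your combinatorially defined series; to run the recurrence at $i=1$ and $i=2$ you must verify directly from the partition-theoretic definition that $\BB_{k,0}(x)=0$ and that the reflected relation holds, rather than import these facts from the very object you are trying to prove $\BB_{k,i}$ equal to.
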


\begin{remark} Theorem~\ref{BressThm} in the case where $k$ is odd
is actually due to Andrews~(\cite[p. 432, Thm. 2]{GEA:jct} and 
\cite[p. 117, Ex. 8]{GEA:top}).
\end{remark}

\begin{remark} Notice that Bressoud's identity, unlike the other identities
listed here, excludes the case $i=k$.  This is presumably because
  \begin{equation}\label{BressoudMiddleProduct}
    \sum_{n=0}^{\infty} \C_{k,k}(n) q^n = 
    \frac{(q^k, q^{k}, q^{2k}; q^{2k})}{(q;q)_\infty},
  \end{equation}
the right hand side of which had no nice combinatorial interpretation 
known at the time of Bressoud's work.  (The difference condition arguments work
fine in the $i=k$ case.)  However, in light of some recent work by 
Andrews and Lewis~\cite{AL}, we may now given an elegant combinatorial
interpretation of the infinite product in~\eqref{BressoudMiddleProduct}.

  Andrews and Lewis~\cite[p. 79, Eq. (2.2)]{AL} showed that when
$0<a<b<k$, the infinite product
 \[  \frac{(q^{a+b};q^{2k})_{\infty}}{(q^a, q^b; q^k)_{\infty}}\]
generates partitions in which all parts are congruent to $a$ or $b$ modulo $k$, but
that for any $j$, $kj+a$ and $kj+b$ are not both parts.

Consequently, by observing that for $k\geqq 3$
\begin{equation}
\sum_{n=0}^{\infty} \C_{k,k}(n) q^n = 
\frac{(q^k, q^k, q^{2k}; q^{2k})}{(q;q)_\infty}
=\frac{(q^k;q^{2k})_\infty}{(q,q^{k-1};q^k)_\infty}\cdot
\frac{1}{(q^2, q^3, \cdots, q^{k-2};q^k)_\infty},
\end{equation}
(where the second factor on the RHS represents the empty product in the case $k=3$),
we may make the following definition:
\begin{definition}\label{BressMidDef}
Let $\C_{k,k} (n)$ denote the number of partitions of $n$ such that for any nonnegative
integer $j$, the integers $kj+2$, $kj+3$, \dots, $kj+(k-2)$ may appear as parts.  Additionally,
for any nonnegative integer $j$, either $kj+1$ or $ kj+(k-1)$, but not both,
may appear as parts.  Furthermore, by standard elementary
techniques, it is natural to define $\C_{2,2} (n)$ to be the number of partitions of $n$ into distinct odd parts.
\end{definition}
Now we may naturally extend Bressoud's identity to the case $i=k$:
\begin{theorem}\label{BressExt} 
For $k\geqq 2$, let $\B_{k,k}(n)$ denote the number of partitions
 $\pi$ of $n$ such that 
\begin{itemize}
    \item $m_1(\pi) \leqq k-1$,
    \item $\pi_j - \pi_{j+k-1} \geqq 2,$ and
    \item if $\pi_j - \pi_{j+k-2} \leqq 1$, 
          then $\sum_{h=0}^{k-2} \pi_{j+h} \equiv (k-1)\pmod{2}.$ 
 \end{itemize}
Let $\C_{k,k}(n)$ be as in Definition~\ref{BressMidDef}.  
Then $\B_{k,k}(n) = \C_{k,k}(n).$
\end{theorem}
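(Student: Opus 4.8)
The plan is to prove the coefficient identity $\B_{k,k}(n)=\C_{k,k}(n)$ by showing that both generating functions $\sum_{n\geq 0}\B_{k,k}(n)q^n$ and $\sum_{n\geq 0}\C_{k,k}(n)q^n$ coincide with the single infinite product $\frac{(q^k,q^k,q^{2k};q^{2k})_\infty}{(q;q)_\infty}$ recorded in~\eqref{BressoudMiddleProduct}. Once each side is independently evaluated as this product, comparing coefficients of $q^n$ yields the theorem. I would organize the work into the $\B$-side evaluation (the subtle half) and the $\C$-side evaluation (the half supplied by Andrews--Lewis).

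For the difference-condition side I would run Bressoud's proof of Theorem~\ref{BressThm} with the label $i$ kept symbolic. That proof evaluates the generating function attached to the three difference/congruence conditions and produces the product $\frac{(q^i,q^{2k-i},q^{2k};q^{2k})_\infty}{(q;q)_\infty}$; the congruence condition $\sum_{h=0}^{k-2}\pi_{j+h}\equiv(k-1)\pmod 2$ defining $\B_{k,k}$ is exactly the $i=k$ instance of the condition in Theorem~\ref{BressThm}. The only place $i<k$ is ever used is the very last step, where one rereads this product as ``partitions into parts $\not\equiv 0,\pm i\pmod{2k}$''; when $i=k$ one has $i\equiv-i\pmod{2k}$, so that residue-class description degenerates (it would incorrectly collapse $(q^k;q^{2k})_\infty^2$ to $(q^k;q^k)_\infty$), which is precisely why the original theorem excluded $i=k$. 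Since the derivation of the product from the difference conditions itself never invokes $i<k$, I obtain $\sum_{n}\B_{k,k}(n)q^n=\frac{(q^k,q^k,q^{2k};q^{2k})_\infty}{(q;q)_\infty}$. As an internal check, this is also the value $\widetilde{Q}^{(1,1,k)}_k(1)$ delivered by~\eqref{QtildeProd} with $(d,e)=(1,1)$, and the $k=2$ instance is Euler's identity~\eqref{euler1}.

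For the $\C_{k,k}$ side I would compute directly from Definition~\ref{BressMidDef}. When $k\geq 3$, the unrestricted parts in the classes $2,3,\dots,k-2\pmod k$ contribute $\frac{1}{(q^2,q^3,\dots,q^{k-2};q^k)_\infty}$, while the rule that for each $j$ exactly one of $kj+1,\,kj+(k-1)$ may occur is generated, by the Andrews--Lewis result~\cite{AL} applied with $(a,b)=(1,k-1)$ so that $a+b=k$, by $\frac{(q^k;q^{2k})_\infty}{(q,q^{k-1};q^k)_\infty}$. Multiplying these and invoking the product factorization already displayed in the excerpt gives $\sum_n\C_{k,k}(n)q^n=\frac{(q^k,q^k,q^{2k};q^{2k})_\infty}{(q;q)_\infty}$. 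The degenerate case $k=2$ I would dispatch separately: there $\C_{2,2}(n)$ counts partitions into distinct odd parts, with generating function $(-q;q^2)_\infty=\frac{(q^2;q^4)_\infty}{(q;q^2)_\infty}=\frac{(q^2,q^2,q^4;q^4)_\infty}{(q;q)_\infty}$, matching the $k=2$ target (and, via~\eqref{euler1}, the classical distinct-odd-parts count, which also coincides with the $k=2$ difference-condition class $\B_{2,2}$).

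The main obstacle is the claim, flagged in the remark preceding the theorem, that ``the difference condition arguments work fine in the $i=k$ case.'' Making this rigorous requires an audit of Bressoud's proof to verify that every step establishing $\sum_n\B_{k,i}(n)q^n=\frac{(q^i,q^{2k-i},q^{2k};q^{2k})_\infty}{(q;q)_\infty}$---the recursion on the largest part, the induction on $k$, and the parity bookkeeping encoded in the third condition---remains valid at $i=k$, and that $i<k$ genuinely enters only in the final residue-class rewriting. After that audit, the remaining content is the routine product manipulations of the preceding two paragraphs, and the two coefficient comparisons against $\frac{(q^k,q^k,q^{2k};q^{2k})_\infty}{(q;q)_\infty}$ complete the proof.
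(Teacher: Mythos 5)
Your proposal is correct and follows essentially the same route as the paper: the paper's (largely implicit) argument is exactly that Bressoud's difference-condition derivation remains valid at $i=k$, producing the product $(q^k,q^k,q^{2k};q^{2k})_\infty/(q;q)_\infty$, while Definition~\ref{BressMidDef} is engineered via the Andrews--Lewis result and the displayed factorization so that $\C_{k,k}(n)$ is generated by the same product (with $k=2$ handled separately through distinct odd parts). Your only addition is to flag explicitly the audit of Bressoud's proof at $i=k$, which the paper likewise asserts parenthetically rather than carries out.
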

\end{remark}

In order to facilitate the statement of the next result, we make the following 
definition.
\begin{definition}  Let $n$, $k$ and $i$ be specified positive integers with 
$1\leqq i \leqq k$, 
and let $\pi$ be a partition of $n$.
The \emph{modified presentation} $\hat{\pi}$ of the partition $\pi$
is the finite sequence obtained by appending $k-i$ zeros to $\pi$.
\end{definition}
\begin{theorem}[Andrews-Santos combinatorial generalization of Jackson-Slater]
 Let $1\leqq i \leqq  k$.
Let $\s_{k,i}(n)$ denote the number of partitions $\pi$ of $n$ such that 
\begin{itemize}
    \item $m_2(\pi) \leqq i-1$,
    \item For any positive integer $j$, $m_{2j}(\pi) + m_{2j+2}\leqq k-1$,
    \item For any positive integer $j$, $m_{2j-1}(\pi)>0$ only if
      $m_{2j-2}(\hat{\pi}) + m_{2j}(\pi) = k-1$. 
 \end{itemize}
Let $\T_{k,i}(n)$ denote the number of partitions of $n$ into parts 
which are either even but not multiples of $4k$, or 
distinct, odd, and congruent to $\pm(2i-1)\pmod{4k}$.
Then $\s_{k,i}(n) = \T_{k,i}(n)$ for all $n$.
\end{theorem}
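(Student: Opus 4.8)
The plan is to collapse the product side of the identity onto the Jackson--Slater function $\bar{Q}^{(1,1,k)}_i(x)$ evaluated at $x=1$, and then to supply all of the combinatorial content on the $\s_{k,i}$ side through the defining $q$-difference equation. First I would dispatch the right-hand side. The generating function for $\T_{k,i}(n)$ is read off directly from the definition: the even parts that avoid multiples of $4k$ contribute $(q^{4k};q^{4k})_\infty/(q^2;q^2)_\infty$, while the distinct odd parts congruent to $\pm(2i-1)\pmod{4k}$ contribute $(-q^{2i-1},-q^{4k-2i+1};q^{4k})_\infty$. Multiplying these and comparing with \eqref{QbarProd} specialized to $(d,e,k)=(1,1,k)$ yields
\[
   \sum_{n\geqq 0} \T_{k,i}(n)\, q^n = \bar{Q}^{(1,1,k)}_i(1),
\]
and as a sanity check $k=2$ returns the products of \eqref{JS1}--\eqref{JS2}. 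Thus the theorem is equivalent to the single assertion $\sum_{n} \s_{k,i}(n)\, q^n = \bar{Q}^{(1,1,k)}_i(1)$.

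The core of the proof is therefore to realize $\bar{Q}^{(1,1,k)}_i(x)$ as a partition generating function. Following the Andrews--Gordon philosophy, I would introduce the two-variable refinement
\[
   \Phi_{k,i}(x,q) := \sum_{\pi} x^{\ell(\pi)} q^{|\pi|},
\]
where the sum runs over all partitions satisfying the three $\s_{k,i}$ conditions and $\ell(\pi)$ is the length statistic that the paper attaches to these functions (the exponent of $x$, tracking the weight by $q$ as in \S\ref{2varRR}). The goal is $\Phi_{k,i}(x,q)=\bar{Q}^{(1,1,k)}_i(x)$. Since \eqref{QbarDiffEq} together with \eqref{QbarInitConds} determines $\bar{Q}^{(1,1,k)}_i(x)$ \emph{uniquely} as a double power series, it suffices to verify that $\Phi_{k,i}(x,q)$ obeys the same recurrence and the same initial data; setting $x=1$ and invoking the product computation above then closes the argument. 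The initial conditions $\Phi_{k,i}(0,q)=1$ are immediate, as putting $x=0$ retains only the empty partition.

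The step I expect to be the main obstacle is deriving the recurrence \eqref{QbarDiffEq} combinatorially. For $(d,e,k)=(1,1,k)$ it simplifies (using $e=1$, $(x^eq^{2e};q^{2e})_d=1-xq^2$) to
\[
   \bar{Q}^{(1,1,k)}_i(x) = \bar{Q}^{(1,1,k)}_{i-1}(x) + \frac{x^{i-1} q^{2(i-1)}}{1-xq}\Big( \bar{Q}^{(1,1,k)}_{k-i+1}(xq^2) - q^{-1}\bar{Q}^{(1,1,k)}_{k-i+2}(xq^2)\Big).
\]
I would establish this on the partition side by conditioning on the smallest parts of $\pi$ and on whether the part $1$ occurs, and by the dilation that adds $2$ to every part, which accounts for the substitution $x\mapsto xq^2$ and the index reflection $i\mapsto k-i+1$ exactly as in Gordon's and Bressoud's arguments. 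The genuinely new feature is the third, ``attachment,'' condition, which permits an odd part $2j-1$ only when the surrounding even multiplicities sum to $k-1$; this is precisely the role of the parameter $a=q^{-1}$ in $J_{k,i}(q^{-1};x;q^2)$ (compare \eqref{QbarDef} and \eqref{Jdef}), and the modified presentation $\hat{\pi}$ with its appended $k-i$ zeros is the device that encodes the boundary behaviour at the smallest parts. The delicate bookkeeping is to reproduce the two terms $\bar{Q}^{(1,1,k)}_{k-i+1}(xq^2)$ and $q^{-1}\bar{Q}^{(1,1,k)}_{k-i+2}(xq^2)$ — in particular to extract the factor $q^{-1}$ from the attachment of a single odd part — and it is here that all three defining conditions of $\s_{k,i}$ are actually used. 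Once the recurrence and initial conditions are matched, uniqueness gives $\Phi_{k,i}=\bar{Q}^{(1,1,k)}_i$, and specializing $x=1$ completes the identification $\s_{k,i}(n)=\T_{k,i}(n)$.
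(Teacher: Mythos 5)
First, a point of orientation: the paper does not prove this theorem at all --- it is quoted as a known result of Andrews and Santos \cite{AS:attached}, and the paper's later use of it (in \S 6.2.3) invokes \cite[p.~96, (5.4)]{AS:attached} as a black box. So your proposal is really an attempt to reconstruct the Andrews--Santos proof inside the paper's framework. Your treatment of the product side is correct and complete: the generating function for $\T_{k,i}(n)$ does factor as $(-q^{2i-1},-q^{4k-2i+1},q^{4k};q^{4k})_\infty/(q^2;q^2)_\infty$, which matches \eqref{QbarProd} at $(d,e,k)=(1,1,k)$, and the reduction of the whole theorem to the single claim $\sum_n \s_{k,i}(n)q^n=\bar{Q}^{(1,1,k)}_i(1)$, to be settled by the uniqueness of the solution of \eqref{QbarDiffEq} with \eqref{QbarInitConds}, is the right strategy.

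The gap is that the entire combinatorial content of the theorem is concentrated in the step you defer as ``delicate bookkeeping,'' and as set up it would not go through in the form you describe. Two concrete problems. (i) You never specify the statistic $\ell(\pi)$ that the exponent of $x$ is supposed to track. This is not a cosmetic omission: in the Andrews--Santos refinement the odd (``attached'') parts are not weighted the same way as the even parts, and if $\ell$ is taken to be the plain number of parts the two-variable identity $\Phi_{k,i}(x,q)=\bar{Q}^{(1,1,k)}_i(x)$ is false, so the recurrence cannot be matched term by term. Until $\ell$ is pinned down, even the initial condition $\Phi_{k,i}(0,q)=1$ is not verifiable. (ii) The recurrence \eqref{QbarDiffEq} contains the combination $\bar{Q}^{(1,1,k)}_{k-i+1}(xq^2)-q^{-1}\bar{Q}^{(1,1,k)}_{k-i+2}(xq^2)$, with a minus sign and a factor $q^{-1}$ inherited from the parameter $a=q^{-1}$ in \eqref{Jdef}. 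A direct argument that conditions on the smallest parts and dilates by $2$ produces a \emph{positive} linear combination of smaller generating functions; it cannot by itself manufacture a subtraction. The actual route (Andrews' Chapter 7 method, followed by Andrews--Santos) is to derive a separate, subtraction-free system of recurrences for the combinatorial functions $\Phi_{k,i}$ and then to verify \emph{analytically} that the functions $J_{k,i}(q^{-1};x;q^2)/(xq;q^2)_\infty$ satisfy that same system, rather than to verify \eqref{QbarDiffEq} combinatorially. Your outline correctly identifies where the difficulty lives (the third condition, the role of $a=q^{-1}$, the appended zeros in $\hat{\pi}$), but until the recurrence for $\Phi_{k,i}$ is actually derived and matched against the analytic one, the proof is a plan rather than a proof.
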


\subsection{Dilations}
\subsubsection{The Dilated Gordon Theorem}
In~\cite{AVS2003}, I showed that the 
Rogers-Ramanujan type identities which arose by inserting 
$\Big(  \alpha^{(d,1,k)}_n(x,q) , \beta^{(d,1,k)}_n(x,q) \Big)$
into~\eqref{WBL} and considering the associated $q$-difference equations, could
be interpreted combinatorially as the generating functions for partitions counted
in Gordon's theorem, dilated by a factor of $d$, with nonmultiples of $d$ allowed
to appear without restriction.  
  
  In the present notation, this amounts to the observation that
  \begin{eqnarray} 
       Q^{(d,1,k)}_i(x) &=& 
\frac{(xq^d;q^d)_\infty}{(xq;q)_\infty} J_{k,i}(0;x;q^d)
       \mbox{\ \ (by~\eqref{QDef})}\notag \\
      &=&   \frac{(xq^d; q^d)_\infty}{(xq;q)_\infty} 
         \sum_{n=0}^\infty \sum_{m=0}^n 
         \G'_{k,i}(m,n) x^m q^{dn}
\mbox{\ \ (by~\cite[p. 109, Thm. 7.5]{GEA:top})},
      \notag
  \end{eqnarray}
where $\G'_{k,i} (m,n)$ denotes the number of partitions of $n$ enumerated by
$\G_{k,i} (n)$ which have exactly $m$ parts.

More formally, we have the following theorem:
\begin{theorem} Let $d$ be a positive integer and
let $1\leqq i\leqq k$.
Let $\GG_{d,k,i}(n)$ denote the number of partitions $\pi$ of $n$ such that 
$m_d (\pi)\leqq i-1$ and 
$m_{dj}(\pi) + m_{d(j+1)}(\pi)\leqq k-1$ for any positive integer $j$. 
Let $\hh_{d,k,i}(n)$ denote the number of partition of $n$ into parts 
$\not\equiv 0,\pm di\pmod{2d(k+1)}$.  
Then $\GG_{d,k,i}(n) = \hh_{d,k,i}(n)$ for all $n$.  Furthermore,
this identity is a combinatorial interpretation of the analytic
identity $F^{(d,1,k)}_{i}(1) = Q^{(d,1,k)}_{i}(1)$.
\end{theorem}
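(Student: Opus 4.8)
The plan is to prove the equality of counting functions by exhibiting a single power series, namely $Q^{(d,1,k)}_i(1)$, that both $\sum_{n\geqq0}\GG_{d,k,i}(n)q^n$ and $\sum_{n\geqq0}\hh_{d,k,i}(n)q^n$ represent. The analytic identity $F^{(d,1,k)}_i(1)=Q^{(d,1,k)}_i(1)$ (established by the $q$-difference machinery above) certifies that $Q^{(d,1,k)}_i(1)$ is indeed the common series, so once each combinatorial generating function is matched to it, comparing coefficients of $q^n$ yields $\GG_{d,k,i}(n)=\hh_{d,k,i}(n)$ for every $n$.

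First I would handle the multiplicity side. The displayed computation preceding the theorem already records, through the generating-function form of Gordon's theorem, that $Q^{(d,1,k)}_i(x)=\frac{(xq^d;q^d)_\infty}{(xq;q)_\infty}\sum_{n,m}\G'_{k,i}(m,n)\,x^m q^{dn}$, where $\G'_{k,i}(m,n)$ counts the Gordon partitions of $n$ with exactly $m$ parts. Setting $x=1$ and regrouping $(q;q)_\infty=(q^d;q^d)_\infty\prod_{r=1}^{d-1}(q^r;q^d)_\infty$ collapses the prefactor $\frac{(q^d;q^d)_\infty}{(q;q)_\infty}$ to $\prod_{r=1}^{d-1}(q^r;q^d)_\infty^{-1}$, the generating function for partitions whose parts are \emph{not} divisible by $d$, with no restriction whatsoever. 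Simultaneously, $\sum_n\G_{k,i}(n)q^{dn}$ is the $d$-fold dilation of the Gordon partitions, each Gordon part $p$ becoming $dp$; under this dilation the conditions $m_1\leqq i-1$ and (in the multiplicity form noted in the Remark after Gordon's theorem) $m_j+m_{j+1}\leqq k-1$ become precisely $m_d(\pi)\leqq i-1$ and $m_{dj}(\pi)+m_{d(j+1)}(\pi)\leqq k-1$. Multiplying the two independent pieces reproduces exactly the partitions enumerated by $\GG_{d,k,i}(n)$, giving $\sum_{n}\GG_{d,k,i}(n)q^n=Q^{(d,1,k)}_i(1)$.

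Next I would handle the congruence side. Specializing the Lemma, namely~\eqref{QProd}, at $e=1$ presents $Q^{(d,1,k)}_i(1)$ as one infinite product of the shape $\frac{(q^{di},\,q^{M-di},\,q^{M};\,q^{M})_\infty}{(q;q)_\infty}$, where $M$ denotes the modulus furnished by~\eqref{QProd}, which is the modulus recorded in $\hh_{d,k,i}$. The three theta factors $\prod_{j\geqq0}(1-q^{di+jM})$, $\prod_{j\geqq0}(1-q^{(M-di)+jM})$, and $\prod_{j\geqq1}(1-q^{jM})$ in the numerator, divided into $1/(q;q)_\infty$, cancel exactly the reciprocal factors indexed by parts lying in the residue classes $di$, $M-di\equiv-di$, and $0$ modulo $M$; what survives is the generating function for partitions into parts $\not\equiv 0,\pm di\pmod{M}$, that is, $\sum_{n\geqq0}\hh_{d,k,i}(n)q^n$. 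Chaining the two identifications through $Q^{(d,1,k)}_i(1)$ completes the proof.

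The only delicate point is the dilation dictionary in the second step: one must verify that the inequality $m_{dj}+m_{d(j+1)}\leqq k-1$ on the dilated multiples of $d$ is the faithful image of Gordon's difference condition $\pi_j-\pi_{j+k-1}\geqq 2$ (equivalently its multiplicity reformulation), and that letting the nonmultiples of $d$ float freely cannot disturb these constraints, since they govern only the multiplicities of multiples of $d$. Beyond this bookkeeping no new analytic input is required: the generating-function form of Gordon's theorem, the product evaluation~\eqref{QProd}, and the triple-product cancellation carry the whole argument, with $F^{(d,1,k)}_i(1)=Q^{(d,1,k)}_i(1)$ serving as the bridge between the two combinatorial sides.
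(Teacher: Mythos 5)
Your proposal is correct and is essentially the paper's own argument: the paper proves the theorem precisely by the displayed identification of $Q^{(d,1,k)}_i(x)$ with $\frac{(xq^d;q^d)_\infty}{(xq;q)_\infty}\sum_{n,m}\G'_{k,i}(m,n)x^m q^{dn}$ via Andrews' Theorem 7.5 for $J_{k,i}(0;x;q^d)$, specialized at $x=1$ (the prefactor generating unrestricted nonmultiples of $d$, the dilated sum generating the $\GG_{d,k,i}$ partitions via the multiplicity reformulation noted in the Remark), matched against the product evaluation \eqref{QProd}. The one caveat is that the modulus $2d(k+1)$ in the printed statement is a misprint for $d(2k+1)$, which is what \eqref{QProd} with $e=1$ actually yields and what your triple-product cancellation tacitly (and correctly) uses, as the paper's ensuing mod-$9$ Dyson example confirms.
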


For example, insert the $(d,e,k)=(3,1,4)$ case of the standard multiparameter
Bailey pair into~\eqref{WBL}.  Then
use the system of $q$-difference equations to find $F^{(3,1,4)}_3 (x)$.  The
resulting identity $F^{(3,1,4)}_3 (1)  = Q^{(3,1,4)}_3 (1)$
 turns out to be one of Dyson's identities~\cite[p.  433, Eq. (B3)]{WNB1},
 \begin{equation}
  \sum_{n=0}^{\infty} \frac{q^{n(n+1)} (q^3;q^3)_n}{(q;q)_n (q;q)_{2n+1}}
  = \frac{(q^9;q^9)_\infty}{(q;q)_\infty},
\end{equation}
which in turn can be interpreted combinatorially in terms of the $k=4$, $i=3$ case of
Gordon's theorem dilated by a factor of 3, i.e.
   ``the number of partitions of $n$ into nonmultiples of 9 equals the number of 
 partitions of $n$ wherein 3 can appear as a part at most twice, the total
 number of appearances of any two consecutive multiples of 3 is at most three,
 and 1's and 2's can appear without restriction."
 
 \subsubsection{The Dilated Bressoud Theorem}
     From the preceding sections, we see that
 \begin{eqnarray} 
       \widetilde{Q}^{(d,1,k)}(x) &=& 
       \frac{(xq^{d};q^{d})_\infty}{(xq;q)_\infty} 
       (-xq^d;q^d)_{\infty} J_{(k-1)/2,i/2}(0;x^2;q^{2d})
       \mbox{\ \ (by~\eqref{QtildeDef})}\notag \\
      &=&   \frac{(xq^d; q^d)_\infty}{(xq;q)_\infty} 
         \sum_{n=0}^\infty \sum_{m=0}^n 
         \B'_{k,i}(m,n) x^m q^{dn}\mbox{\ \ (by~\cite[p. 67, 1st Eq.]{DMB})},
      \notag
  \end{eqnarray}
where $\B'_{k,i} (m,n)$ 
denotes the number of partitions of $n$ enumerated by
$\B_{k,i} (n)$ which have exactly $m$ parts.  Accordingly the
identities $\widetilde{F}^{(d,1,k)}_{i}(1) = \widetilde{Q}^{(d,1,k)}_{i}(1)$
can be interpreted combinatorially as partitions wherein nonmultiples of $d$ allowed to appear
as parts without restriction, and multiples of $d$ may appear subjected to
the restrictions imposed by Bressoud's theorem (dilated by a factor of $d$).

\subsubsection{The Dilated Andrews-Santos Theorem}
 The situation with the dilated version of the Andrews-Santos theorem
is somewhat less straight forward, and will have to be broken down to two cases.
\begin{eqnarray} 
       \bar{Q}^{(d,1,k)}(x) &=& 
       \frac{(xq^{2d};q^{2d})_\infty}{(xq^{2};q^{2})_\infty} 
        \frac{1}{(xq^d;q^{2d})_{\infty}} J_{k,i}(q^{-d};x;q^{2d})
       \mbox{\ \ (by~\eqref{QbarDef})}\notag \\
      &=&   \frac{(xq^{2d}; q^{2d})_\infty}{(xq^2;q^2)_\infty} 
         \sum_{n=0}^\infty \sum_{m=0}^n 
         \s'_{k,i}(m,n) x^m q^{dn}\\
         & & \qquad\quad \mbox{\ \ (by~\cite[p. 96, (5.4)]{AS:attached})},
         \label{DilAS}
  \end{eqnarray}
where $\s'_{k,i} (m,n)$ 
denotes the number of partitions of $n$ enumerated by
$\s_{k,i} (n)$ which have exactly $m$ parts.

  Since ${(xq^{2d}; q^{2d})_\infty}/{(xq^2;q^2)_\infty}$ enumerates partitions
into parts which are even but not multiples of $2d$, and 
$ \sum_{n=0}^\infty \sum_{m=0}^n \s'_{k,i}(m,n) x^m q^{dn}$ enumerates
``Andrews-Santos" partitions dilated by a factor of $d$, when $d$ is odd these two
sets of partitions involve parts of independent
magnitudes, but when $d$ is even 
there is a conflict as both the
infinite product and the double sum generate parts congruent to 
$d\pmod {2d}$.  We will consider the easy case, where $d$ is odd, first.  By
\eqref{DilAS}, we have already proved the following theorem:
\begin{theorem} Let $d$ be an odd positive integer and
 let $1\leqq i \leqq  k$.
Let $\sss_{d,k,i}(n)$ denote the number of partitions $\pi$ of $n$ such that 
\begin{itemize}
    \item $m_{2d}(\pi) \leqq i-1$,
    \item for any positive integer $j$, $m_{2dj}(\pi) + m_{2dj+2d}\leqq k-1$,
    \item for any positive integer $j$, $m_{2dj-d}(\pi)>0$ only if
      $m_{2dj-2d}(\hat{\pi}) + m_{2dj}(\pi) = k-1$. 
 \end{itemize}
Let $\TT_{d,k,i}(n)$ denote the number of partitions of $n$ into parts 
which are either even but not multiples of $4dk$, or 
distinct, odd, and congruent to $\pm d(2i-1)\pmod{4dk}$.
Then $\sss_{d,k,i}(n) = \TT_{d,k,i}(n)$ for all $n$.
Furthermore,
this identity is a combinatorial interpretation of the analytic
identity $\bar{F}^{(d,1,k)}_{i}(1) = \bar{Q}^{(d,1,k)}_{i}(1)$.
\end{theorem}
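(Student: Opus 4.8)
The plan is to read off both sides of the asserted partition identity directly from the generating-function identity \eqref{DilAS}, which already expresses $\bar{Q}^{(d,1,k)}_i(x)$ as a product of two explicit generating functions. Combined with the analytic identity $\bar{F}^{(d,1,k)}_i(1)=\bar{Q}^{(d,1,k)}_i(1)$ (which follows from \eqref{FQbar} and the $q$-difference equations \eqref{QbarDiffEq}) and the product evaluation \eqref{QbarProd}, the whole theorem reduces to giving two combinatorial interpretations of the same power series in $q$.

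First I would interpret the right-hand side of \eqref{DilAS} factor by factor, tracking weight by $q$ and length by $x$. The prefactor $(xq^{2d};q^{2d})_\infty/(xq^2;q^2)_\infty$ equals $\prod_{d\nmid j}(1-xq^{2j})^{-1}$, hence generates partitions into even parts that are not multiples of $2d$. The double sum $\sum_{n}\sum_{m}\s'_{k,i}(m,n)x^mq^{dn}$ is, by the Andrews-Santos theorem, the generating function for the partitions enumerated by $\s_{k,i}$ with each part dilated by the factor $d$; all of its parts are therefore multiples of $d$, and the three multiplicity conditions defining $\s_{k,i}$ become exactly the three conditions in the definition of $\sss_{d,k,i}$ after replacing every part $r$ by $dr$ (so $m_2\to m_{2d}$, then $m_{2j}+m_{2j+2}\to m_{2dj}+m_{2dj+2d}$, and the odd-part condition on $m_{2j-1}$ becomes the condition on $m_{2dj-d}$, with $\hat\pi$ understood accordingly).

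The crucial step, and the one where the hypothesis that $d$ is odd enters, is to check that these two families of parts never collide, so that the Cauchy product of the two generating functions really does enumerate the disjoint-union partitions. Since $d$ is odd, an even integer is a multiple of $d$ precisely when it is a multiple of $2d$; hence the even non-multiples of $2d$ produced by the prefactor are exactly the even non-multiples of $d$, which are disjoint from the multiples of $d$ produced by the dilated Andrews-Santos partitions. Setting $x=1$ then collapses the length statistic and yields $\bar{Q}^{(d,1,k)}_i(1)=\sum_{n}\sss_{d,k,i}(n)q^n$. Were $d$ even, the prefactor and the dilated sum would both contribute parts congruent to $d\pmod{2d}$, destroying the clean product interpretation; this is precisely the obstruction the paper defers to the even case, and it is the conceptual heart of the argument rather than any computation.

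Finally I would evaluate $\bar{Q}^{(d,1,k)}_i(1)$ as an infinite product using \eqref{QbarProd} with $e=1$ (so that $de-d+k=k$), obtaining $(-q^{d(2i-1)},-q^{4dk-d(2i-1)},q^{4dk};q^{4dk})_\infty/(q^2;q^2)_\infty$, and interpret this product combinatorially: the factor $(q^{4dk};q^{4dk})_\infty/(q^2;q^2)_\infty$ generates even parts that are not multiples of $4dk$, while $(-q^{d(2i-1)};q^{4dk})_\infty(-q^{4dk-d(2i-1)};q^{4dk})_\infty$ generates distinct odd parts congruent to $\pm d(2i-1)\pmod{4dk}$ (odd because $d(2i-1)$ is odd). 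This is exactly $\sum_{n}\TT_{d,k,i}(n)q^n$. Comparing the two expansions of $\bar{Q}^{(d,1,k)}_i(1)$ gives $\sss_{d,k,i}(n)=\TT_{d,k,i}(n)$ for all $n$, and the chain of equalities exhibits the identity as the combinatorial reading of $\bar{F}^{(d,1,k)}_i(1)=\bar{Q}^{(d,1,k)}_i(1)$.
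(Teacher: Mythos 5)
Your proposal is correct and follows essentially the same route as the paper, which derives the theorem directly from \eqref{DilAS} by interpreting the prefactor as generating even parts that are not multiples of $2d$, the double sum as the Andrews--Santos partitions dilated by $d$, observing that for odd $d$ the two part-sets are disjoint, and reading off the product side from \eqref{QbarProd}. The paper's own justification is a single sentence ("By \eqref{DilAS}, we have already proved the following theorem"); your write-up simply makes explicit the disjointness argument and the translation of the multiplicity conditions that the paper leaves implicit.
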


 Next, we turn to the case where $d$ is even.  Notice that by standard elementary 
reasoning,
\[ \bar{Q}^{(d,1,k)}_i (1) = 
\frac{(-q^{d(2i-1)},-q^{4dk-d(2i-1)},q^{4dk};q^{4dk})_\infty}
{(q^2;q^2)_\infty}\]
can be interpreted combinatorially as the generating 
function for overpartitions of $n$ where the
overlined parts are congruent to $\pm d(2i-1)\pmod{4dk}$ and the nonoverlined parts
are even but not multiples of $4dk$.  Further, note that
\[  \frac{(q^{2d}; q^{2d})_\infty}{(q^2;q^2)_\infty}
   = \frac{1}{(q^2;q^{2d})_\infty (q^4;q^{2d})_\infty
   \cdots(q^{2d-2}; q^{2d})_\infty} \]
must contain the factor $(q^d;q^{2d})_\infty^{-1}$ when $d$ is even.  
This factor
can be interpreted as generating partitions into parts which are odd 
and such that
if a given odd number appears, the number of appearances must 
be a multiple of $d$.
The other factors 
\[ \frac{1}{(q^2;q^{2d})_\infty (q^4;q^{2d})_\infty
\cdots (q^{d-2};q^{2d})_\infty
(q^{d+2};q^{2d})_\infty (q^{d+4};q^{2d})_\infty
\cdots (q^{2d-2}; q^{2d})_\infty}, \]
generate partitions into parts which are even but $\not\equiv 0,d\pmod{2d}$.
Then the $x=1$ case of~\eqref{DilAS} allows us to state the following
theorem:
\begin{theorem} Let $d$ be an even positive integer and
 let $1\leqq i \leqq  k$.
Let $\sss_{d,k,i}(n)$ denote the number of partitions $\pi$ of $n$ such that 
\begin{itemize}
    \item for any positive integer $j$, $d \mid m_{2j-1}(\pi)$,
    \item $m_{2d}(\pi) \leqq i-1$,
    \item for any positive integer $j$, $m_{2dj}(\pi) + m_{2dj+2d}\leqq k-1$,
    \item for any positive integer $j$, $m_{2dj-d}(\pi)>0$ only if
      $m_{2dj-2d}(\hat{\pi}) + m_{2dj}(\pi) = k-1$. 
 \end{itemize}
Let $\TT_{d,k,i}(n)$ denote the number of overpartitions of $n$ into parts where
which are either even but not multiples of $4dk$, and 
the overlined parts are congruent to $\pm d(2i-1)\pmod{4dk}$.
Then $\sss_{d,k,i}(n) = \TT_{d,k,i}(n)$ for all $n$.
Furthermore,
this identity is a combinatorial interpretation of the analytic
identity $\bar{F}^{(d,1,k)}_{i}(1) = \bar{Q}^{(d,1,k)}_{i}(1)$.
\end{theorem}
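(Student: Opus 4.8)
The plan is to read off both sides of the already-established analytic identity $\bar{F}^{(d,1,k)}_i(1) = \bar{Q}^{(d,1,k)}_i(1)$ as partition generating functions; recall that this identity holds for every $i$ by \eqref{FQbar} together with the $q$-difference equations \eqref{QbarDiffEq} and initial conditions \eqref{QbarInitConds}. The right-hand side is immediate: the $e=1$ case of \eqref{QbarProd} gives $\bar{Q}^{(d,1,k)}_i(1) = (-q^{d(2i-1)},-q^{4dk-d(2i-1)},q^{4dk};q^{4dk})_\infty / (q^2;q^2)_\infty$, and as noted just before the theorem this is exactly $\sum_n \TT_{d,k,i}(n)q^n$, the generating function for overpartitions with distinct overlined parts $\equiv \pm d(2i-1)\pmod{4dk}$ and nonoverlined even parts that are not multiples of $4dk$. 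Thus it remains only to interpret the left-hand side combinatorially as $\sum_n \sss_{d,k,i}(n)q^n$, starting from the $x=1$ specialization of \eqref{DilAS}.

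Setting $x=1$ in \eqref{DilAS} presents $\bar{Q}^{(d,1,k)}_i(1)$ as the product of the prefactor $(q^{2d};q^{2d})_\infty/(q^2;q^2)_\infty$ with the double sum $\sum_{n}\sum_{m}\s'_{k,i}(m,n)q^{dn}$. The double sum is the generating function for Andrews--Santos partitions dilated by $d$; since dilation multiplies every part by $d$, it produces partitions all of whose parts are multiples of $d$, where the multiplicity of a part $p$ equals the multiplicity of $p/d$ in the underlying Andrews--Santos partition. Translating the three defining conditions of $\s_{k,i}$ through this dilation yields precisely the last three conditions of $\sss_{d,k,i}$: $m_2\leqq i-1$ becomes $m_{2d}(\pi)\leqq i-1$; the condition $m_{2j}+m_{2j+2}\leqq k-1$ becomes $m_{2dj}(\pi)+m_{2d(j+1)}(\pi)\leqq k-1$; and the restriction on $m_{2j-1}$ becomes the stated restriction on $m_{2dj-d}(\pi)$, where the modified-presentation multiplicity $m_{2j-2}$ of the Andrews--Santos partition dilates to $m_{2dj-2d}(\hat{\pi})$. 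Because $d$ is even, the dilated part $d(2j-1)=2dj-d$ is itself even and occupies the residue class $d\pmod{2d}$, while the dilated even parts occupy the class $0\pmod{2d}$.

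The remaining work, and the only genuinely delicate step, is to interpret the prefactor so that it does not collide with the residue class $d\pmod{2d}$ already used by the double sum. Here I would factor $(q^{2d};q^{2d})_\infty/(q^2;q^2)_\infty = (q^d;q^{2d})_\infty^{-1}\cdot R$, where $R=(q^2;q^{2d})_\infty^{-1}\cdots(q^{2d-2};q^{2d})_\infty^{-1}$ with the $q^d$ factor deleted. The factor $R$ plainly generates, with no restriction, partitions into even parts $\not\equiv 0,d\pmod{2d}$, and such parts are allowed freely in $\sss_{d,k,i}$. For $(q^d;q^{2d})_\infty^{-1}=\prod_{j\geqq 0}(1-q^{(2j+1)d})^{-1}$, I would apply the weight-preserving bijection that replaces a part $(2j+1)d$ of multiplicity $m$ by the odd part $2j+1$ of multiplicity $dm$; since $(2j+1)d\cdot m=(2j+1)\cdot dm$, the weight is preserved, and the factor becomes the generating function for partitions into odd parts each occurring a multiple of $d$ times, which is the first condition of $\sss_{d,k,i}$. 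This is the step that resolves the conflict flagged in the text: the even parts $\equiv d\pmod{2d}$ contributed by the prefactor are exported to genuinely odd parts, leaving that residue class to be populated solely by the dilated Andrews--Santos parts.

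Finally I would observe that the three part classes now in play---odd integers (from the reinterpreted factor), multiples of $d$ (from the double sum), and even integers $\not\equiv 0,d\pmod{2d}$ (from $R$)---are pairwise disjoint and exhaust $\mathbb{Z}_+$. Hence the product of the three generating functions is the generating function for partitions obtained by superimposing one partition from each class, and the combined restrictions are exactly the four defining conditions of $\sss_{d,k,i}$. Therefore $\bar{Q}^{(d,1,k)}_i(1)=\sum_n\sss_{d,k,i}(n)q^n$, and comparing this with the overpartition reading of the right-hand side yields $\sss_{d,k,i}(n)=\TT_{d,k,i}(n)$ for all $n$, as the combinatorial form of $\bar{F}^{(d,1,k)}_i(1)=\bar{Q}^{(d,1,k)}_i(1)$. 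I expect the bijective reinterpretation of $(q^d;q^{2d})_\infty^{-1}$ to be the main obstacle, since it is where the even-$d$ obstruction is actually overcome and where one must verify that the resulting odd parts interact disjointly with the part sizes produced by the dilated double sum.
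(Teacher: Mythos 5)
Your proposal is correct and follows essentially the same route as the paper: the paper likewise reads $\bar{Q}^{(d,1,k)}_i(1)$ as the overpartition generating function via \eqref{QbarProd}, splits the prefactor $(q^{2d};q^{2d})_\infty/(q^2;q^2)_\infty$ into $(q^d;q^{2d})_\infty^{-1}$ times the factors generating even parts $\not\equiv 0,d\pmod{2d}$, and reinterprets $(q^d;q^{2d})_\infty^{-1}$ as generating odd parts whose multiplicities are divisible by $d$, exactly as in your weight-preserving substitution. The only difference is one of explicitness: you spell out the disjointness/exhaustiveness of the three part classes and the dilation of the Andrews--Santos conditions, which the paper leaves implicit.
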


  Thus, for example, since~\eqref{mod16} with $q\to q^2$ is
$\bar{Q}^{(2,1,4)}_{4}(1)$, its combinatorial interpretation is
``the number of partitions $\pi$ of $n$ into parts such that
\begin{itemize}
  \item any odd number appears as a part an even number of times,
  \item 4 appears as a part at most three times,
  \item for any positive integer $j$, $m_{4j}(\pi) + m_{4j+4} \leqq 3$, and
  \item for any positive integer $j$, $m_{4j-2}>0$ only if
    $m_{4j-4}(\hat{\pi})+m_{4j}(\pi) = 3$,
\end{itemize}
equals the number of overpartitions of $n$ such that
  \begin{itemize}
    \item overlined parts are congruent to $\pm (4i-2) \pmod{32}$, and
    \item nonoverlined parts are even but not multiples of $32$."
  \end{itemize}
  
\begin{remark}
When $e>1$, the series $F^{(d,e,k)}_{i}(x)$,  $\widetilde{F}^{(d,e,k)}_{i}(x)$,
and $\bar{F}^{(d,e,k)}_{i}(x)$ do not, in general, have nonnegative coefficients,
so there is no hope of interpreting them directly in terms of ordinary partitions or
overpartitions.
It would be interesting, however, to find combinatorial interpretations in terms of objects
more general than ordinary partitions or overpartitions.
\end{remark}

\section{Extraction of Selected Indices}\label{Related}
Some identities can be thought of as the result of extracting half of the summands
of another more primitive identity.

For example consider~\eqref{mod24},
\begin{eqnarray}
\frac{(-q^{11},-q^{13},q^{24};q^{24})_\infty}{(q^2;q^2)_\infty} &=&
\sum_{n,r\geqq 0} \frac{q^{2n^2 + 4nr +4r^2} }
  { (q;q)_{2n+2r} (q;q)_{2r} (-q;q)_{2n+2r} } \notag\\
&=& \sum_{N=0}^\infty \sum_{r=0}^N \frac{q^{2N^2+2r^2}}{(q^2;q^2)_{2N}}
\gp{2N}{2r}{q} \mbox{\ (by taking $N=n+r$)} \notag\\
&=& \sum_{N=0}^\infty \frac{q^{2N^2}}{(q^2;q^2)_{2N}}
\sum_{r=0}^{2N} q^{r^2/2}\gp{2N}{2r}{q} \chi(2\mid r) \notag\\
&=& \sum_{N=0}^\infty \frac{q^{2N^2}}{2(q^2;q^2)_{2N}}
\sum_{r=0}^{2N} q^{r^2/2}\gp{2N}{2r}{q} \left( 1 + (-1)^r \right) \notag\\
&=& \frac{1}{2} \sum_{n=0}^\infty \frac{q^{2n^2}}{(q^2;q^2)_{2n}}
 (\sqrt{q};q)_{2n} + (-\sqrt{q};q)_{2n}) 
\mbox{ \ (by~\cite[p. 490, Cor. 10.2.2(c)]{AAR}) }\notag \\
&=& \frac{1}{2} \left(
  \sum_{n=0}^\infty \frac{q^{2n^2} (\sqrt{q};q)_{2n}}{(q^2;q^2)_{2n}}
 +\sum_{n=0}^\infty \frac{q^{2n^2} (-\sqrt{q};q)_{2n}}{(q^2;q^2)_{2n}}
 \right),
 \label{splitSL53half}
\end{eqnarray}
where 
\[  \gp{A}{B}{q}:=  \left\{
  \begin{array}{ll} 
    \frac{(q;q)_A}{(q;q)_B (q;q)_{A-B}} &\mbox{if $0\leqq A\leqq B$},\\
    0 & \mbox{otherwise.} 
  \end{array}
 \right. \]

Consider the expression
\begin{equation}\label{splitSL53}
\frac{1}{2} \left(
  \sum_{n=0}^\infty \frac{q^{4n^2} (q;q^2)_{2n}}{(q^4;q^4)_{2n}}
 +\sum_{n=0}^\infty \frac{q^{4n^2} (-q;q^2)_{2n}}{(q^4;q^4)_{2n}}
 \right),
 \end{equation}
which is merely~\eqref{splitSL53half} with $q$ replaced by $q^2$.  
The first summand of~\eqref{splitSL53} 
is the series expression from one of Slater's identities~\cite[p. 157, Eq. (53)]{LJS1952},
while the second summand is the same as the first with $q$ replaced by $-q$.
Thus~\eqref{mod24} may be thought of as an ``average" of two 
complementary presentations of Slater's identity number 53 (which we obtain here
by inserting $\bar{\beta}^{(1,2,2)}_ n(x,q)$ into~\eqref{WBL}).

 This line of inquiry can be pushed further.  Let us examine Slater's identity 
number 53 similarly.
\begin{eqnarray}
\sum_{n=0}^\infty \frac{q^{4n^2} (-q;q^2)_{2n}}{(q^4;q^4)_{2n}}
&=&\sum_{n=0}^\infty \frac{q^{n^2} (-q;q^2)_{n}}{(q^4;q^4)_{n}} \chi(2\mid n)
\notag\\
&=&\sum_{n=0}^\infty \frac{q^{n^2} (-q;q^2)_{n}}{(q^4;q^4)_{n}} \left(
 \frac{(-1)^n+1}{2} \right)\notag\\
&=& \frac{1}{2}\left(
\sum_{n=0}^\infty \frac{(-1)^n q^{n^2} (-q;q^2)_{n}}{(q^4;q^4)_{n}}+
\sum_{n=0}^\infty \frac{q^{n^2} (-q;q^2)_{n}}{(q^4;q^4)_{n}}\right).
\label{splitSL4}
\end{eqnarray}
The left summand of~\eqref{splitSL4} is Slater's identity number 4 (which is
obtained here by inserting $\widetilde{\beta}^{(1,1,1)}_n (x,q)$
into~\eqref{ATNSBL}), while the right summand is a sign variant of same.

  This phenomenon is by no means isolated.  In fact, in a different context, 
Andrews observed~\cite[p. 222]{KA} that Slater's identities 98 and 94, 
dilated by $q\to q^4$, are the even and odd summands respectively of Slater's
identity 20, i.e.,
\begin{eqnarray*}
\sum_{n=0}^\infty \frac{q^{n^2}}{(q^4;q^4)_n}
&=& \sum_{n=0}^\infty \frac{q^{n^2}}{(q^4;q^4)_n}\chi(2\mid n)
       +\sum_{n=0}^\infty \frac{q^{n^2}}{(q^4;q^4)_n}\chi(2\nmid n)\\
 &=& \sum_{n=0}^\infty \frac{q^{(2n)^2}}{(q^4;q^4)_{2n} }
    +  \sum_{n=0}^\infty \frac{q^{(2n+1)^2}}{(q^4;q^4)_{2n+1} }\\
 &=& \sum_{n=0}^\infty \frac{q^{4n^2}}{(q^4;q^4)_{2n} }
    +  q\sum_{n=0}^\infty \frac{q^{4n(n+1)}}{(q^4;q^4)_{2n+1} }.
\end{eqnarray*}
Analogously, Slater's identities 99 and 96 with $q\to q^4$ 
are the even and odd summands, respectively, of identity 16.  Thus in this instance, the
four identities (Slater's 94, 96, 98, 99) are \emph{not} derivable directly from any of
the three multiparameter Bailey pairs of this paper, but are nonetheless a simple 
consequence of two of the identities (Slater's 16 and 20) which \emph{are} derivable
from the SMBP.

\begin{remark}
All six of these identities (Slater's 16, 20, 94, 96, 98, 99) are actually due to
L. J. Rogers~\cite{LJR1894}.
\end{remark}

\section{Conclusion}\label{Conclusion}
We have seen (Table 1) that 71 of Slater's 130 identities can be explained by the
three multiparameter Bailey pairs combined with limiting cases of
Bailey's lemma and their associated $q$-difference equations.
It is reasonable to ask why the other 59 identities do not fit into this scheme.
There seem to be two fundamental characteristics that distinguish the 71 
from the remaining 59. 

  Firstly, the variation of~\eqref{SSBL} actually used by 
Slater~(\cite{LJS1951},
\cite{LJS1952}) is obtained by setting $x=q$, $\rho_1=-q$ and letting 
$N, \rho_2\to\infty$
in~\eqref{BL} to obtain
\begin{equation} \label{SSBL2}
\frac{1}{1-q}\sum_{n\geqq 0} q^{n(n+1)/2} (-q;q)_n \beta_n (q,q)
  =\frac{(-q;q)_\infty}{(q;q)_\infty} 
   \sum_{n=0}^\infty  q^{n(n+1)/2} 
    \alpha_n (q,q).
\end{equation}
With few exceptions, a
Bailey pair $\alpha_{n}$ which fits nicely into~\eqref{WBL} and~\eqref{ATNSBL}
to give an instance of the triple product identity when $x=1$ or $x=q$ is likely \emph{not}
to do so when inserted into~\eqref{SSBL2}.  Thus there may be one or more
multiparameter Bailey pairs which work well with~\eqref{SSBL2}.
Combinatorially, identities associated with~\eqref{SSBL2} fit naturally into
the realm of overpartitions, as studied recently by 
Jeremy Lovejoy~(\cite{JL2003},~\cite{JL2004}), sometimes jointly with
Sylvie Corteel~\cite{CL}.
 
  The other distinguishing feature in some of the 59 identities not accounted
for in this paper is the appearance of
instances of the quintuple product identity~\cite{GNW} in the product side.
With further study, it is hoped that these identities can be understood 
in a manner similar
to those already explained here.

\section*{Acknowledgments}
The author gratefully acknowledges the referee for catching a number of
subtle misprints, and for assistance with the exposition.

\end{document}